\documentclass[11pt,oneside,english,reqno]{amsart}
\usepackage[T1]{fontenc}
\usepackage[utf8]{inputenc}
\usepackage[a4paper]{geometry}
\usepackage{mathrsfs}
\usepackage{bm}
\usepackage{amstext}
\usepackage{amsthm}
\usepackage{amssymb}
\usepackage{amsmath}
\usepackage{bbm}
\usepackage{shuffle}
\usepackage{color}
\usepackage{float}
\usepackage{enumerate}
\usepackage{verbatim} 
\usepackage{stmaryrd}
\usepackage{enumitem}
\usepackage{graphicx}
\usepackage{todonotes}
\usepackage{cancel}
\usepackage[normalem]{ulem}
\usepackage{subcaption}
\usepackage{placeins}
\usepackage{caption}
\usepackage{subcaption}
\usepackage{float}
\usepackage{booktabs}   
\usepackage{multirow}   

\usepackage{hyperref}

\usetikzlibrary{arrows.meta, shapes.geometric}
\usepackage{tikz, pgfplots}
\usepackage{tikz-cd}
\usetikzlibrary{matrix,chains,positioning,decorations.pathreplacing,arrows}
\usetikzlibrary{calc,positioning}

\usepackage{tikz}
\usetikzlibrary{arrows.meta,positioning}
\makeatletter
\numberwithin{equation}{section}
\numberwithin{figure}{section}
\theoremstyle{plain}
\newtheorem{thm}{\protect\theoremname}
\numberwithin{thm}{section}
\theoremstyle{definition}
\newtheorem{defn}[thm]{\protect\definitionname}
\theoremstyle{remark}
\newtheorem{rem}[thm]{\protect\remarkname}
\theoremstyle{plain}
\newtheorem{lem}[thm]{\protect\lemmaname}
\theoremstyle{plain}
\newtheorem{prop}[thm]{\protect\propositionname}
\theoremstyle{plain}

\theoremstyle{plain}
\newtheorem{example}[thm]{\protect\examplename}
\DeclareMathOperator{\argmin}{argmin}

\makeatother

\usepackage{babel}
\providecommand{\corollaryname}{Corollary}
\providecommand{\definitionname}{Definition}
\providecommand{\lemmaname}{Lemma}
\providecommand{\propositionname}{Proposition}
\providecommand{\remarkname}{Remark}
\providecommand{\theoremname}{Theorem}
\providecommand{\examplename}{Example}

\newcommand{\cA}{\mathcal{A}}
\newcommand{\cB}{\mathcal{B}}

\newcommand{\cF}{\mathcal{F}}

\newcommand{\cH}{\mathcal{H}}
\newcommand{\cI}{\mathcal{I}}

\newcommand{\cK}{\mathcal{K}}
\newcommand{\cL}{\mathcal{L}}

\newcommand{\cN}{\mathcal{N}}

\newcommand{\cP}{\mathcal{P}}

\newcommand{\cR}{\mathcal{R}}
\newcommand{\cS}{\mathcal{S}}

\newcommand{\cV}{\mathcal{V}}

\newcommand{\cX}{\mathcal{X}}

\newcommand{\sig}{\mathbf{Sig}}
\newcommand{\bsig}{\mathbf{BSig}}

\newcommand{\trees}{\mathcal{T}}

\newcommand{\KK}{\mathbb{K}}

\newcommand{\NN}{\mathbb{N}}

\newcommand{\RR}{\mathbb{R}}

\newcommand{\bbX}{\Bar{\mathbb{X}}}

\newcommand{\bB}{\mathbf{B}}

\newcommand{\bK}{\mathbf{K}}

\newcommand{\bW}{\mathbf{W}}
\newcommand{\bX}{\mathbf{X}}
\newcommand{\bx}{\mathbf{x}}
\newcommand{\bY}{\mathbf{Y}}
\newcommand{\by}{\mathbf{y}}

\newcommand{\se}{\mathsf e}

\renewcommand{\d}{\mathrm{d}}
\newcommand{\bu}{\mathbf{u}}
\newcommand{\bv}{\mathbf{v}}

\newcommand{\bw}{\mathbf{w}}

\newcommand{\hopf}{\mathscr{H}}

\newcommand{\bg}{\bm{g}}
\newcommand{\bff}{\bm{f}}

\newcommand{\dd}{\mathop{}\!\mathrm{d}}

\newenvironment{tree}
               {\begin{tikzpicture}[every node/.style={circle,draw,fill,minimum size=3pt,inner sep=0pt,outer sep=0pt},line cap=round,baseline = .1pt]}
               {\end{tikzpicture}}

\newcommand{\innerprod}[1]{\left\langle #1 \right\rangle}

\newcommand{\R}{\mathbb{R}}

\usepackage[external]{forest}

\def\dtR<#1>{\Forest{[#1]}}
\def\dtI<#1#2>{\Forest{[#1[#2]]}}
\def\dtII<#1#2#3>{\Forest{[#1[#2[#3]]]}}
\def\dtV<#1#2#3>{\Forest{[#1[#2][#3]]}}

\forestset{
  decor/.style = {
    label/.expanded = {[inner sep = 0.1ex, font=\unexpanded{\tiny}]right:{$#1$}}
  },
  root/.style = {minimum size = 0.5ex},
  decorated/.style = {
    for tree = {
      circle, fill, inner sep = 0ex, minimum size = 0.5ex,
      grow' = north, l = 0, l sep = 0.8ex, s sep = 0.5em,
      fit = tight, parent anchor = center, child anchor = center,
      delay = {decor/.option = content, content =}
    }
  },
  default preamble = {decorated, root},
  begin draw/.code={ \begin{tikzpicture}[baseline={([yshift=-0.5ex]current bounding box.center)}] },
}

\title{Branched Signature Kernel Solvers for ODEs with  rough Single-Trajectory signals
}

\author{
Munawar Ali$^*$\qquad Qi Feng$^\dagger$\qquad Charlie Pyle$^\sharp$\qquad George Xu$^{\sharp\sharp}$
}
\thanks{$^*$: Department of Mathematics, Florida State University,
Tallahassee, FL 32306;
e-mail: \texttt{ma22bm@fsu.edu}.}
\thanks{$^\dagger$: Department of Mathematics, Florida State University,
Tallahassee, FL 32306;
e-mail: \texttt{qfeng2@fsu.edu}.
This author is partially supported by the National Science Foundation
under grant \#DMS-2420029.}
\thanks{$^\sharp$: Department of Mathematics, Texas A\&M University,
College Station, TX 77843;
e-mail: \texttt{charliepyle@tamu.edu}.
This author is partially supported by the National Science Foundation
under grant \#DMS-2420029 through the Research Experience for
Undergraduates (REU) program at Florida State University.}
\thanks{$^{\sharp\sharp}$: Department of Mathematics, Rutgers University,
Piscataway, NJ 08854;
e-mail: \texttt{gtx1@scarletmail.rutgers.edu}.
This author is partially supported by the National Science Foundation
under grant \#DMS-2420029 through the Research Experience for
Undergraduates (REU) program at Florida State University.}

\date{\today}

\begin{document}
\sloppy
\maketitle

\begin{abstract} 
We develop a branched signature kernel solver for linear and nonlinear
ordinary differential equations driven by a \emph{single observed
trajectory} of a possibly rough forcing signal --- 
a setting common within earthquake engineering, finance, biology, and structural health monitoring, where only one forcing realization is available, and the solver must respect the underlying physical law without an ensemble of realizations. We first introduce a count-sampling construction method to turn the single observation into a
hierarchical family of $N+1$ nested training paths on which the branched
signature kernel can be evaluated; this allows the signature kernel
machinery, originally designed for multi-realization regression
problems, to operate on a single-trajectory observation. Then we build a
kernel-collocation framework, which places the ansatz either on the highest-order derivative of the solution or on the solution itself. We prove a universal approximation theorem for the branched signature kernel, leveraging the Hairer--Kelly morphism to express branched signature evaluations through geometric signatures of time-extended paths. The offline solver is extended to a streaming
Test/Train/Retrain protocol with optional closed-form online updates in both linear and nonlinear cases. Numerical experiments on six benchmarks show accurate, stable predictions across all regimes.
\end{abstract}

\noindent
{\textit{\bf Keywords}: {Branched Signature; Branched Signature Kernel;
Universal approximation theorem; fractional Brownian motion; Hopf
algebra; ODE solver; count sampling; streaming kernel methods.}}

\noindent
{\textit{\bf 2000 AMS Mathematics subject classification}: 60L10, 60L20, 46E22, 60G17,
65C20, 65C30, 60H10, 91B70.}

\section{Introduction}\label{sec:intro}

The numerical solution of ordinary differential equations driven by a \emph{single observed forcing trajectory} arises in many engineering and scientific settings. A structure responding to a recorded ground acceleration during an earthquake, a financial state variable evolving
under a single realized market signal, a biological process excited by a particular environmental input, and a coupled oscillator network perturbed by one realization of noise are all instances of the same problem: solve $\cN\bu(t)=f(t)$ on $[0,T]$ given a single discrete
sample of $f$. Two features distinguish this regime from the classical setting. First, no ensemble of independent realizations of $f$ is available, so ensemble-averaged calibration is impossible. Second, the forcing is typically rough: high-frequency seismic accelerations, fractional Brownian  motion type volatility, and noisy biological signals all exhibit H\"older regularity well below the bounded-variation assumption of classical numerical analysis. A solver that respects the physical law
governing the ODE while learning a representation of the solution from a single rough trajectory is therefore needed.

In the literature, signatures have been successful in encoding path information. Both signatures of geometric rough paths \cite{lyons2007differential,Friz_Victoir_2010} and branched signatures for branched rough paths \cite{gubinelli2010ramification} are fundamental in rough path theory. Uniqueness results of increasing
generality were established in \cite{hambly2010uniqueness,boedihardjo2016signature,cuchiero2023signature}, and universal approximation theorems for continuous functions on path space were proved in
\cite{lyons2014roughpathssignaturesmodelling,
chevyrev2022signaturemomentscharacterizelaws,cuchiero2023signature,chevyrev2025primer,cuchiero2025universal,cox2026universal,ceylan2026universal}. Pairing two signatures via an inner product yields the \emph{signature kernel}, introduced by~\cite{kiraly2019kernels} by lifting static
kernels to path space via the signature transform.
\cite{Salvi_2021_goursat} characterized the signature kernel as the solution of a Goursat PDE, enabling efficient computation, and \cite{toth2024randomfouriersignaturefeatures} developed random Fourier signature features for scalable kernel evaluation. The branched signature model of ~\cite{ali2025branchedsignaturemodel} provides the
rough-path foundation for our branched signature kernel; the key difference is that ~\cite{ali2025branchedsignaturemodel} works at the
feature-vector level, whereas we work at the reproducing-kernel level, which permits both the universal approximation theorem and the single-trajectory collocation solver developed here.

It is often useful to augment paths before taking signatures, to capture additional structure in data streams. Time, basepoint, and lead–lag
augmentations are discussed in \cite{chevyrev2025primer,lyons2025signaturemethodsmachinelearning,chevyrev2022signaturemomentscharacterizelaws,kiraly2019kernels}; time augmentation guarantees signature uniqueness and universal approximation
\cite{kiraly2019kernels,chevyrev2022signaturemomentscharacterizelaws}, while lead–lag augmentation
\cite{gyurko2013extracting,lyons2025signaturemethodsmachinelearning,chevyrev2025primer} captures quadratic variation and is widely used in finance. \cite{DeepSignatureTransforms} introduced learned augmentations via neural networks, leading to the \texttt{Signatory} library. In this paper we view path augmentation as a transformation that lifts a
branched rough path into a geometric rough path; taking the signature kernel of the augmented path then yields the branched signature kernel of the original path. Efficient raw signature implementations are available in
iisignature ~\cite{iisignature}, Signatory ~\cite{kidger2021signatory}, PySigLib ~\cite{shmelev2025pysiglibfastsignaturebased}, and
KerasSig ~\cite{keras_sig}, with Chen's identity
\cite{chevyrev2025primer} enabling reuse of sub-path signatures via ``stream'' features. For signature kernels, KSig ~\cite{ksiglibrary} provides GPU-accelerated dynamic-programming, low-rank, random-Fourier,
and PDE-based implementations, and PySigLib offers PyTorch-compatible kernel computations with backpropagation. Our count-sampling construction is particularly compatible with these libraries: nested
training paths share prefix signatures, so the family
$\{\bff_0,\ldots,\bff_N\}$ can be processed at essentially the cost of the longest path.

Signature-based models have been applied across finance, medicine, and signal processing ~\cite{lemercier2021distributionregressionsequentialdata,lyons2014roughpathssignaturesmodelling,levin2016learningpastpredictingstatistics,zeng2025semiparametricfunctionalclassificationpath,bayer2025localregressionpathspaces,chevyrev2025primer}. In finance, signatures' affinity for rough volatility makes them a natural tool for option pricing, nowcasting, and optimal stopping ~\cite{gyurko2013extracting,mohaddes2025regularizedlearningfractionalbrownian,alos2025volatilitymodelingroughpaths,cohen2023nowcastingsignaturemethods,horvath2023optimalstoppingdistributionregression,lyons2020non}. For differential equations, the closest precedent to our solver is the signature-kernel path-dependent PDE solver of ~\cite{pannier2024pathdependentpdesolverbased}, which fits an RKHS minimum-norm interpolant of the PPDE solution at collocation points but requires \emph{multiple} realizations of the driving path and does not employ branched signatures. Our framework removes both restrictions: a single trajectory suffices via count-sampling, and the branched signature kernel resolves the higher-order brackets that distinguish branched from geometric roughness. Neural-network signature solvers for path-dependent PDEs have been proposed in ~\cite{feng2023,ppde_nn_1,ppde_nn_2,bayraktar2024deep} and extended to kernel frameworks in ~\cite{PPDE_nn_kernel}; both again rely on multiple path realizations. The kernel-collocation methodology underlying our solver traces back to the Kansa method ~\cite{kansa1,kansa2} for RBF-based PDE solving, with subsequent RBF and RKHS methods for differential equations~\cite{RBF_kernel_methods,rkhs_nonlinear_bvps} and Gaussian-process priors for nonlinear PDE solvers~\cite{nonlinearpdes_GP_RKHS}, which inform our regularization and warm-starting choices in the nonlinear case. Finally, the optional rolling prediction protocol developed here is a kernel-method analogue of online updating: we exploit the block structure of the augmented kernel matrix to update only the newest block in closed form, with periodic full-batch retraining to control drift, paralleling recursive least squares and online kernel ridge regression but, to our knowledge, not previously instantiated for signature-kernel ODE solvers.

Based on the above ideas, we develop a branched signature-kernel collocation solver that operates on a \emph{single observed forcing trajectory}, the central novelty of this paper. A \emph{count-sampling} scheme turns one discrete observation into a hierarchical family of
nested training paths $\bff_0\subset\bff_1\subset\cdots\subset\bff_N$, on which the branched signature kernel is natively evaluated and across which Chen's identity yields prefix-sharing speedups; to our knowledge this single-trajectory
training set is new. Building on it, we develop two complementary collocation formulations---a differential ansatz on the highest-order derivative $\bu^{(m)}$ and an integral ansatz on $\bu$ itself, both applicable to multidimensional linear and nonlinear systems. We prove a universal approximation theorem for the branched signature kernel, establishing it as a universal hypothesis class for continuous functionals of the forcing path. Numerically, the branched signature kernel is computed via the Hairer--Kelly morphism \cite{hairer2015geometric}, whose neural network approximation was first proposed in \cite{ali2025branchedsignaturemodel}. The offline solver is extended to a streaming Test/Train/Retrain protocol with either standard Train/Retrain operations or rolling retraining with closed-form $O(nd^{2}+d^{3})$ online updates in the linear case and scalar Newton steps in the nonlinear case.

The paper is organized as follows. Section~\ref{sec: pre} introduces signatures and signature kernels. Section~\ref{sec: brached} presents the branched signature-kernel model and proves a universal approximation theorem. Section~\ref{sec: kernel solvers} develops branched signature-kernel solvers for linear ODE systems, including the \emph{linear integral method} and its $m$-fold-integrated variant, and for nonlinear ODE systems via the \emph{nonlinear integral method}. Section~\ref{sec:test-train} introduces a standard prediction method along with a streaming protocol with closed-form online updates in the linear case and scalar Newton updates in the nonlinear case, and presents a periodic retraining algorithm. Section~\ref{sec:numerical-experiments} presents numerical experiments on an earthquake displacement model, the Solow capital-stock model, an fBM-driven linear ODE, a forced Duffing oscillator, an Arias-intensity-degraded single-degree-of-freedom system with path-dependent coefficients, and a rough Kuramoto phase-oscillator system, with side-by-side comparisons of branched and non-branched constructions.

%%%%%%%%%%%%%%%%%%%%%%%%%%%%%%%%%%
\section{Preliminaries}\label{sec: pre}

In this section, we first provide a self-contained introduction to signatures and signature kernels. We then define the branched signature \cite{ali2025branchedsignaturemodel} and the corresponding branched signature kernels.

\subsection{Paths and Signatures}
To begin with, we will set some terminology that will help us to talk about signatures and signature kernels. 
\begin{defn}
A path \(\bX\) from some time interval \([0,T]\) to \(\cH\) is a continuous mapping written as
\[\bX: [0,T] \to \cH.\]
Also, we define \(\bX_{s,t} = \bX_t - \bX_s\) as an increment of the path.
\end{defn}
Unless otherwise stated, we will be working with the paths that take values in the finite dimensional Hilbert space \(\cH\) with \(\dim(\cH) = d\). Let \(\cA\) be the alphabet set corresponding to \(\cH\) given as \(\cA = \{1,2,\dots,d\}\). We define a word \(\bw\) of length \(|\bw|=k\) to be sequence \(\bw = w_1\cdots w_k\), where \(w_i \in \cA\) for \(i=1,2,\dots,k\). Let \(\bW\) be the set of all words made from the alphabet \(\cA\). Define
\[ 
\bW_n :=
\begin{cases}
\left\{\bw = w_{i_1}\cdots w_{i_k}: |\bw| =  n\right\},  & n \ne 0,\\
\emptyset,   & n=0.
\end{cases}
\]
\begin{defn}
For two words \(\bv = v_1\cdots v_j\) and \(\bw = w_1\cdots w_k\) , we define the concatenation \(\bv \bw\) as 
\[\bv \bw = v_1\cdots v_j w_1\cdots w_k.\]
\end{defn}
The signature of a path defined on \(\cH\) lives in the space called \textbf{extended tensor algebra} over \(\cH\) defined as
\[T((\cH)) =  \prod_{n=0}^{\infty} (\cH)^{\otimes n} = \{\bv = (v_0, v_1, \dots , v_n, \dots ) \; | \; v_n \in (\cH)^{\otimes n}, n = 0, 1,\dots \},\]
where \((\cH)^{\otimes 0} := \RR\). We also define the \textbf{tensor algebra} over \(\cH\) to be the space \(T(\cH)\) defined by 
\[T(\cH) =  \bigoplus_{n=0}^{\infty} (\cH)^{\otimes n} = \{ \bv \in T((\cH)) \; | \; \forall \;  \bv \; \exists \;  K \in \NN \text{ such that } v_n = 0 \; \forall \; n \ge K\},\]
where most of the components of the infinite dimensional object \(\bv \in T((\cH))\) are zero. If we want to restrict the non-zero components then we have the \textbf{truncated tensor algebra} over \(\cH\) which is defined as
\[T^N(\cH) := \left\{ \mathbf{v} \in T(\cH) \; | \; v_n = 0 \; \forall \;  n > N\right\}.\]
Next, define paths of bounded \(p\)-variation for \(p=1\) also called paths of bounded variation.
\begin{defn}
Let $\mathcal{H}$ be a Hilbert space. We denote the set of $\mathcal{H}$-valued paths of bounded variation on $[0,1]$ starting at the origin by
$$
C^1([0,1], \mathcal{H}):=\left\{\bX \in C([0,1], \mathcal{H}): \bX(0)=0, \quad\|\bX\|_1<\infty\right\}
$$
where $\|\bX\|_1=\sup _\pi \sum_{i=1}^{l-1}\left\|\bX\left(t_{i+1}\right)-\bX\left(t_i\right)\right\|_{\mathcal{H}}$ and the supremum is taken over all finite partitions $\pi=\left\{\left(t_i\right)_{i=1, \ldots, l}: 0=t_1<\cdots<t_l=1\right\}$ of $[0,1]$.
\end{defn}
Though we will be working with paths of bounded \(p\)-variation for any \(p \ge 1\), the signature of bounded \(p\)-variation for \(p = 1\) is canonically defined using Riemann--Stieltjes integration. 
\begin{defn}
The signature of an \(\cH\)-valued path of bounded variation \(\left(\bX_t\right)_{t \in [0,T]}\) is an infinite dimensional object \(\sig(\bX)_{st}\) that belongs to the space \(T((\cH))\) and is defined as
\[\sig(\bX)_{st} = \left(1, \int_s^t d\bX^{w_i}_r, \int_s^t \int_s^{r_2} d\bX^{w_i}_{r_1} d\bX^{w_j}_{r_2}, \int_s^t \int_s^{r_3}\int_s^{r_2} d\bX^{w_i}_{r_1} d\bX^{w_j}_{r_2} d\bX^{w_k}_{r_3}, \cdots \right)\]
Where ${w_i,w_j,w_k,  \dots \in \cA}$. Equivalently, the signature of a continuous \(\cH\)-valued path of bounded variation \((\bX_t)_{t \in [0,T]}\) is a \(T((\cH))\)-valued process \( (s,t) \in \Delta_T^2 \mapsto \sig(\bX)_{st} \in T((\cH))\) also defined recursively as
\[\innerprod{\sig(\bX)_{st}, \emptyset} := 1, \qquad \innerprod{\sig(\bX)_{st}, \bw} := \int_{s}^{t} \innerprod{\sig(\bX)_{sr}, w_1\cdots w_{n-1}} d\bX^{w_n}_{r},\]
for each word \(\bw = w_1 \dots w_n \in \bW\) and \(\Delta_T^2 := \{ (s,t) \in [0,T]^2 : 0 \leq s \leq t \leq T \}\).
\end{defn}

The signature of a path of bounded variation enjoys several structural properties, one being the shuffle product. To describe the shuffle property for signature components, we first define the shuffle product on words \(\bw\) in \(\bW\).

\begin{defn}
Let \(\emptyset\) denote the empty word. For words \(\bu,\bv\) and letters
\(a,b\), the shuffle product is defined recursively by
\[
\bu \shuffle \emptyset = \emptyset \shuffle \bu = \bu,
\qquad
(\bu a) \shuffle (\bv b)
=
\bigl(\bu \shuffle \bv b\bigr)a
+
\bigl(\bu a \shuffle \bv\bigr)b .
\]
\end{defn}
Fix a basis \(\{e_1,\dots,e_d\}\) of \(\cH\), and for a word \(\bw=w_1\cdots w_n\) define
\(e_{\bw}:=e_{w_1}\otimes\cdots\otimes e_{w_n}\) and \(e_{\emptyset}:=1\). The shuffle product then extends from words to elements of \(T(\cH)\) by
\[
\bx
\shuffle
\by
=
\sum_{\bu,\bv}x_{\bu}y_{\bv}
\left(e_{\bu}\shuffle e_{\bv}\right),
\]
where \(\bx=\sum_{\bu}x_{\bu}e_{\bu}, \by=\sum_{\bu}y_{\bu}e_{\bu} \in T(\cH)\). Following this definition, if we endow the space \(T(\cH)\) with the shuffle product \(\shuffle\) then the quadruple \(\left(T(\cH), +, \cdot, \shuffle \right)\) is a commutative associative algebra.

\begin{defn}
Let \((\bX_t)_{t \in [0,T]}\) be a continuous \(\cH\)-valued path of bounded variation and let \(\bu = u_1 \dots u_n\) and \(\bv = v_1 \dots v_m\) be two words. The shuffle property states that 
\[
\innerprod{\sig(\bX)_{st}, \bu}\,
\innerprod{\sig(\bX)_{st}, \bv}
=
\innerprod{\sig(\bX)_{st}, \bu \shuffle \bv}.
\]
\end{defn}

\begin{rem}
When \(|\bu| = |\bv| = 1\), the shuffle property reduces to the classical integration-by-parts identity. See for example, \cite{lyons2007differential}.
\end{rem}

Another nice property is Chen's Identity (See \cite{Friz_Victoir_2010}). This allows for the concatenation of paths, which we will rely on later in our count method speedup. 
\begin{prop}
Let \((\bX_t)_{t \in [0,T]}\) be a continuous \(\cH\)-valued path of bounded variation. Then the concatenated signature over intervals \([s,u]\) and \([u,t]\) satisfies
\[
\sig(\bX)_{st} = \sig(\bX)_{su} \otimes \sig(\bX)_{ut},
\qquad 0 \leq s \leq u \leq t \leq T. 
\]
Equivalently, for every word \(\bw \in \bW\), denote \(\bu \bv\) as concatenation. We have
\[
\innerprod{\sig(\bX)_{st}, \bw}
=
\sum_{\bu \bv = \bw}
\innerprod{\sig(\bX)_{su}, \bu}\,
\innerprod{\sig(\bX)_{ut}, \bv}.
\]
\end{prop}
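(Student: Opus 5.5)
We prove Chen's identity in its coordinate form, that is, for every word \(\bw\), and the tensor identity \(\sig(\bX)_{st} = \sig(\bX)_{su} \otimes \sig(\bX)_{ut}\) then follows. The reason is that the product in \(T((\cH))\) pairs with \(e_{\bw}\) by deconcatenation:
\[
\innerprod{\mathbf a\otimes \mathbf b, \bw}=\sum_{\bu\bv=\bw}\innerprod{\mathbf a,\bu}\innerprod{\mathbf b,\bv}.
\]
The argument is an induction on the length \(n=|\bw|\), using only the recursive definition of the signature and additivity of Riemann--Stieltjes integrals over adjacent intervals. Fix \(s\le u\le t\) throughout.

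\emph{Base case and inductive hypothesis.} For \(n=0\) both sides equal \(1\), since the only splitting of \(\emptyset\) is \(\emptyset\,\emptyset\). Assume the identity holds for all words of length at most \(n-1\), and for all \(t\ge u\). This is a statement about \(r\mapsto \sig(\bX)_{sr}\) for \(r\in[u,t]\), so we apply the hypothesis with \(t\) replaced by a running variable \(r\in[u,t]\).

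\emph{Inductive step.} Let \(\bw=\bw' a\) with \(|\bw'|=n-1\) and \(a\in\cA\). By definition,
\[
\innerprod{\sig(\bX)_{st},\bw}=\int_s^t \innerprod{\sig(\bX)_{sr},\bw'}\,d\bX^a_r .
\]
Split this integral into \(\int_s^u+\int_u^t\).

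The first piece is exactly \(\innerprod{\sig(\bX)_{su},\bw}\innerprod{\sig(\bX)_{ut},\emptyset}\). This is the term of the deconcatenation sum with \(\bv=\emptyset\).

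For the second piece, for \(r\in[u,t]\) the inductive hypothesis gives
\[
\innerprod{\sig(\bX)_{sr},\bw'}=\sum_{\bu\bv'=\bw'}\innerprod{\sig(\bX)_{su},\bu}\innerprod{\sig(\bX)_{ur},\bv'}.
\]
Here the factors \(\innerprod{\sig(\bX)_{su},\bu}\) are constants in \(r\). Pulling them out of the integral gives
\[
\sum_{\bu\bv'=\bw'}\innerprod{\sig(\bX)_{su},\bu}\int_u^t\innerprod{\sig(\bX)_{ur},\bv'}\,d\bX^a_r
=\sum_{\bu\bv'=\bw'}\innerprod{\sig(\bX)_{su},\bu}\innerprod{\sig(\bX)_{ut},\bv' a},
\]
again by the recursive definition.

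\emph{Reindexing.} The splittings \(\bu\bv=\bw\) with \(\bv\neq\emptyset\) are in bijection with the splittings \(\bu\bv'=\bw'\), via \(\bv=\bv'a\). The remaining splitting \(\bv=\emptyset\) is the first piece. Adding the two pieces therefore gives the full sum over \(\bu\bv=\bw\), which completes the induction.

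\emph{Main obstacle.} There is no real analytic difficulty. The only points to check are that the Riemann--Stieltjes integrals exist and are additive over \([s,u]\cup[u,t]\). This holds because each integrand \(r\mapsto\innerprod{\sig(\bX)_{sr},\bw'}\) is continuous and of bounded variation, being itself an integral against a BV path, and \(\bX\) is continuous BV. The combinatorial bookkeeping in the reindexing step is the only place where care is needed.
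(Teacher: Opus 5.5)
Your proof is correct and complete. The induction on $|\bw|$ works as follows: split $\int_s^t=\int_s^u+\int_u^t$, apply the hypothesis to the integrand $r\mapsto\innerprod{\sig(\bX)_{sr},\bw'}$ for $r\in[u,t]$, pull out the constants $\innerprod{\sig(\bX)_{su},\bu}$, and reindex the splittings via $\bv=\bv'a$. This is the standard elementary argument, and your remark that the coordinates of $\mathbf a\otimes\mathbf b$ are obtained by deconcatenation correctly transfers the coordinate identity to the tensor form. The paper gives no proof of this proposition; it only refers to Friz--Victoir, so there is no in-paper route to match.

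For comparison, another common textbook argument fixes $s\le u$ and notes that both $r\mapsto\sig(\bX)_{sr}$ and $r\mapsto\sig(\bX)_{su}\otimes\sig(\bX)_{ur}$, for $r\in[u,T]$, solve the linear equation $\mathrm{d}Z_r=Z_r\otimes\mathrm{d}\bX_r$ with the same value $\sig(\bX)_{su}$ at $r=u$. Uniqueness then gives the identity. That version treats all levels at once and carries over directly to truncated, group-valued signatures. Yours is more hands-on and makes the deconcatenation bookkeeping explicit; the ODE route's uniqueness step, resolved level by level, amounts to essentially your induction anyway.
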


In general, if the signatures of two paths of bounded variation coincide, then the paths agree up to tree-like equivalence \cite{hambly2010uniqueness}. Uniqueness of signatures for time-extended semimartingales was proved in \cite{cuchiero2023signature}. We will use the following uniqueness theorem for geometric rough paths of time-extended paths (proved in \cite{ali2025branchedsignaturemodel}), which is key to using the signature as a feature map that can universally approximate functionals of the underlying path under suitable assumptions.

\begin{lem} \label{sig_uiqueness}
Let $ \bX,\bY:[0,T]\to\mathbb R^d$ be continuous $\alpha$-H\"older paths with $\bX_0=\bY_0=0$ for some $\alpha >  \frac{1}{4}$.
Form the time-augmented paths $\widehat{\bX}(t):=(t,\bX_t)$ and $\widehat{\bY}(t):=(t,\bY_t)$ in $\mathbb R^{1+d}$.
Assume their (geometric) terminal signatures coincide at all levels i.e.,
\[
\sig(\widehat{\bX})_{0,T}=\sig(\widehat{\bY})_{0,T}.
\]
Then $\bX_t=\bY_t$ for all $t\in[0,T]$.
\end{lem}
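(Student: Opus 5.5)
The plan is to read off the path values directly from components of the terminal signature of the time-augmented path, using words that involve the time letter. Label the time coordinate by the letter $0$ and the coordinates of $\bX$ by $1,\dots,d$. Since $\alpha>\frac14$, the geometric rough-path lift of $\widehat{\bX}$ is canonically determined by $\bX$. This is because every iterated integral containing at least one $dt$ can be defined as a Young integral or as a Riemann integral against a continuous integrand. The coordinates of the lift over words of the form $0^k i$ and $i\,0^k$ are then classical objects, and they are fixed by $\sig(\widehat{\bX})_{0,T}$.

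\textbf{Step 1: polynomial moments of $\bX$.} For each $i\in\{1,\dots,d\}$ and $k\ge 0$, consider the word $i\,0^k$, meaning one $d\bX^i$ followed by $k$ time increments. Its coordinate is
\[
\innerprod{\sig(\widehat{\bX})_{0,T}, i\,0^k}=\int_{0<r<s_1<\cdots<s_k<T} d\bX^i_r\, ds_1\cdots ds_k=\int_0^T \bX^i_{0,s}\,\frac{(T-s)^{k-1}}{(k-1)!}\,ds
\]
for $k\ge1$. The second equality comes from integrating out the inner variable, which gives $\bX^i_{0,s_1}=\bX^i_{s_1}$ because $\bX_0=0$, and then applying the Cauchy formula for repeated integration. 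This step uses only a continuous integrand against $ds$, so no rough integration enters.

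\textbf{Step 2: identification.} Equality of the signatures gives, for all $k\ge1$,
\[
\int_0^T (\bX^i_s-\bY^i_s)(T-s)^{k-1}\,ds=0 .
\]
The polynomials in $(T-s)$ are dense in $C([0,T])$ by the Weierstrass theorem. Hence the continuous function $\bX^i-\bY^i$ is orthogonal to a dense set, which forces $\bX^i-\bY^i\equiv0$. Doing this for each $i$ gives $\bX_t=\bY_t$ for all $t$.

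\textbf{Main obstacle.} The delicate point is the justification in Step 1 that the abstract coordinate of the geometric rough path $\widehat{\bX}$ at the word $i\,0^k$ really equals the classical iterated integral. When $\alpha\le\frac12$, higher levels of the lift are genuinely extra data, and one must use that the mixed levels involving time are canonically determined by $\bX$. The route I would take is to approximate $\bX$ by smooth (piecewise linear) paths $\bX^n$ whose lifts converge in rough-path metric, which is possible for geometric rough paths. Along this sequence, the coordinate at $i\,0^k$ equals the Riemann formula for each $n$. Both sides are continuous in the uniform topology, so the formula passes to the limit. The hypothesis $\alpha>\frac14$ guarantees that the geometric lift with the time component exists; the argument itself only uses the first-order mixed coordinates.
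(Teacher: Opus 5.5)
The paper does not prove this lemma; it only attributes it to \cite{ali2025branchedsignaturemodel}, so there is no in-paper argument to compare yours with. Your moment argument is correct. The formula in Step~1 and the Weierstrass step in Step~2 are right, and together they recover each $\bX^i$ from the explicit countable family $\innerprod{\sig(\widehat{\bX})_{0,T}, i\,0^k}$, $k\ge1$. The usual alternative is to invoke uniqueness of the signature up to tree-like equivalence \cite{hambly2010uniqueness,boedihardjo2016signature}, then argue that the strictly increasing time coordinate rules out tree-like pieces. Your route avoids that machinery entirely and uses nothing about $\alpha>\frac14$ beyond the existence of a lift. In exchange, the tree-like route does not care how the time cross terms of the lift are chosen, and that is exactly where your write-up needs correcting.

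Your claims that the lift of $\widehat{\bX}$ is ``canonically determined by $\bX$'' and that every coordinate containing a $dt$ is a Young or Riemann integral are false for $\alpha\le\frac12$. The word $ij0$ involves the non-canonical area $\mathbb{X}^{ij}$. Even the cross terms are not forced: you can translate the lift by $(g_t-g_s)(e_0\otimes e_i-e_i\otimes e_0)$ with any $g\in C^{2\alpha}$ and keep Chen's relation, the H\"older bounds and geometricity. For such a lift, the coordinates $i\,0^k$ only determine $r\mapsto\int_0^r\bX^i_u\,du-(g_r-g_0)$, so Steps~1--2 cannot recover $\bX$. Step~1 therefore rests on a convention, not a theorem: $\widehat{\bX}$ must carry the Young pairing of a geometric lift $\mathbf{X}$ of $\bX$ with $t\mapsto t$ \cite{Friz_Victoir_2010}. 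That is the intended reading of the lemma, and once you state it your proof is complete.

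In the limiting argument, approximate $\mathbf{X}$ rather than $\bX$, since piecewise-linear interpolations of $\bX$ need not converge to the given lift. Take smooth $\bX^n$ whose canonical lifts converge to $\mathbf{X}$ in rough-path metric, and time-augment each with the exact coordinate $t$. The left-hand side then converges by continuity of the Young pairing and the Lyons extension, not by uniform continuity. Alternatively, check directly that $(s,t)\mapsto\int_s^t\bX^i_{s,r}\frac{(t-r)^{k-1}}{(k-1)!}\,dr$ satisfies Chen's relation, which involves only the words $i\,0^j$ and $0^j$, and is $O(|t-s|^{k+\alpha})$. Uniqueness of the extension at levels $\ge4$, together with the convention at levels $\le3$, then identifies it with the $i\,0^k$ coordinate.
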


\subsection{Kernels and Signature Kernels}

Before defining a signature kernel, we recall standard static kernels and their key properties, since the signature kernel will simply be a kernel on path space.

\begin{defn}
Let $\cX$ be a nonempty set. A function $\bK:\cX\times\cX\to\mathbb{R}$ is a
\emph{kernel} if there exists a Hilbert space $\cH$ and a map $\phi:\cX\to\cH$
such that
\[
\bK(x,x') := \bigl\langle \phi(x),\phi(x') \bigr\rangle_{\cH},
\qquad \forall\, x,x'\in\cX.
\]
A kernel $\bK$ is \emph{positive definite} if it is symmetric and, for every
$n\in\mathbb{N}$, every $x_1,\ldots,x_n\in\cX$, and every
$c_1,\ldots,c_n\in\mathbb{R}$,
\[
\sum_{i=1}^n\sum_{j=1}^n c_i c_j\,\bK(x_i,x_j) > 0.
\]
\end{defn}

This definition ensures that \(\bK\) can be realized as an inner product in some feature space \(\cH\), which is exactly the structure we will exploit on path space. Two common example kernels are as follows:

\begin{example}
The linear kernel between a set of data points $x,y \in \mathbb{R}^d$ is simply the inner product of the two inputs
\[
\bK(x,y) = \bigl <x,y\bigr>_{\RR^d}.
\]
\end{example}

\begin{example}
For \(x,y \in \mathbb{R}^d\), we define the RBF kernel using the squared
Euclidean distance \(\|x-y\|_2^2\) and a positive parameter \(\sigma>0\) as follows:
\[
\bK(x,y) = \exp\left(-\frac{\|x-y\|_2^2}{2\sigma^2}\right).
\]
\end{example}

The Moore–Aronszajn theorem explains how any positive definite kernel induces a canon\-ical function space, the RKHS, in which kernel methods act as linear models in feature space.

\begin{thm}
For every symmetric positive definite kernel $\bK:\cX\times\cX\to\RR$,
there exists a unique Hilbert space $\cH_{\bK}$ of functions $f:\cX\to\RR$,
the \emph{reproducing kernel Hilbert space (RKHS)} of $\bK$,
satisfying the reproducing property
\[
    \langle f,\,\bK(x,\cdot)\rangle_{\cH_{\bK}} = f(x),
    \qquad \forall\,f\in\cH_{\bK},\; x\in\cX.
\]
\end{thm}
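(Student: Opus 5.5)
The plan is the classical Moore--Aronszajn construction: build a pre-Hilbert space from the kernel sections $\bK(x,\cdot)$, complete it, and realize the completion as a space of functions on $\cX$. First, let $\cH_0$ be the linear span of the functions $\bK(x,\cdot)$, $x\in\cX$. On $\cH_0$ we define, for $f=\sum_i a_i\bK(x_i,\cdot)$ and $g=\sum_j b_j\bK(y_j,\cdot)$,
\[
\langle f,g\rangle_0:=\sum_{i,j}a_ib_j\,\bK(x_i,y_j).
\]
This is well defined, independent of the chosen representations, because $\langle f,g\rangle_0=\sum_j b_j f(y_j)=\sum_i a_i g(x_i)$. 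Bilinearity and symmetry are immediate from the symmetry of $\bK$, and positive definiteness gives $\langle f,f\rangle_0\ge 0$. The reproducing property on $\cH_0$, namely $\langle f,\bK(x,\cdot)\rangle_0=f(x)$, holds by construction.

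Definiteness follows from Cauchy--Schwarz for the positive semidefinite form:
\[
|f(x)|=|\langle f,\bK(x,\cdot)\rangle_0|\le \|f\|_0\,\bK(x,x)^{1/2}.
\]
Hence $\|f\|_0=0$ forces $f\equiv 0$, so $\langle\cdot,\cdot\rangle_0$ is a genuine inner product. The same estimate shows that point evaluations are continuous on $\cH_0$. In particular, any $\|\cdot\|_0$-Cauchy sequence $(f_n)$ converges pointwise on $\cX$.

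The main obstacle is completing $\cH_0$ \emph{as a space of functions} rather than only abstractly. Take $\cH_{\bK}$ to be the set of pointwise limits $f$ of $\|\cdot\|_0$-Cauchy sequences $f_n\in\cH_0$, with $\langle f,g\rangle:=\lim_n\langle f_n,g_n\rangle_0$. One needs three facts:
\begin{enumerate}
\item The limit exists; this is a standard Cauchy argument.
\item It is independent of the chosen sequences. The key point is that if $f_n\to 0$ pointwise and $(f_n)$ is Cauchy, then $\|f_n\|_0\to0$. To see this, fix $m$ with $\|f_n-f_m\|_0<\varepsilon$ for all $n\ge m$, and write $f_m=\sum_i a_i\bK(x_i,\cdot)$. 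Then
\[
\|f_n\|_0^2=\langle f_n,f_n-f_m\rangle_0+\sum_i a_i f_n(x_i).
\]
The first term is at most $\varepsilon\sup_n\|f_n\|_0$, and the second tends to $0$ as $n\to\infty$ because $f_n\to0$ pointwise.
\item The resulting space is complete, with $\cH_0$ dense in it; this is the usual diagonal argument.
\end{enumerate}
The reproducing property then extends by continuity: $\langle f,\bK(x,\cdot)\rangle=\lim_n f_n(x)=f(x)$.

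Uniqueness comes last. Suppose $\cH'$ is another Hilbert space of functions on $\cX$ with reproducing kernel $\bK$. It contains every $\bK(x,\cdot)$, and its inner product agrees with $\langle\cdot,\cdot\rangle_0$ on $\cH_0$, so it contains the closure of $\cH_0$ isometrically, with the same pointwise limits. If $h\in\cH'$ is orthogonal to $\cH_0$, then $h(x)=\langle h,\bK(x,\cdot)\rangle=0$ for every $x\in\cX$, so $h=0$. Therefore $\cH'=\cH_{\bK}$ with the same norm.
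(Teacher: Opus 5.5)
The paper gives no proof of this statement; it quotes it as the classical Moore--Aronszajn theorem. Your proposal is the standard Aronszajn construction behind that citation, and it is correct. In particular, the following steps all go through:
\begin{itemize}
\item well-definedness of $\langle\cdot,\cdot\rangle_0$;
\item definiteness via $|f(x)|\le\|f\|_0\,\bK(x,x)^{1/2}$;
\item your key lemma that a $\|\cdot\|_0$-Cauchy sequence converging pointwise to $0$ must also converge to $0$ in norm, which makes the inner product on $\cH_{\bK}$ depend only on the limit functions;
\item the uniqueness argument, which writes $\cH'$ as the closure of $\cH_0$ (equal to $\cH_{\bK}$) plus $\cH_0^{\perp}=\{0\}$.
\end{itemize}

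Two small points are worth stating explicitly. First, the extended form is definite on $\cH_{\bK}$. This follows at once from $|f(x)|=\lim_n|f_n(x)|\le\lim_n\|f_n\|_0\,\bK(x,x)^{1/2}$, but you never say it, and without it $\cH_{\bK}$ is only a semi-inner-product space. Second, positive definiteness must be read as $\sum_{i,j}c_ic_j\bK(x_i,x_j)\ge 0$. The strict inequality in the paper's definition fails trivially for $c=0$, and the non-strict form is exactly what you use.
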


Within an RKHS, the minimum‑norm interpolant formalizes the basic kernel‑collocation idea we will later use with signature kernels.

\begin{defn}
Given distinct points $x_1,\ldots,x_N\in\Omega$ with values $y_1,\ldots,y_N\in\RR$
and an RKHS $\cH$ over $\Omega$, the \emph{minimum-norm interpolant} is
\[
f^*_{X,Y} = \argmin_{f\in\cH}\|f\|_{\cH}
\quad\text{subject to}\quad f(x_i)=y_i,\quad i=1,\ldots,N.
\]
\end{defn}

In our setting, we will construct such interpolants on a space of paths, using a kernel built from path signatures.

\begin{defn}\label{def:linsigkernel}
Let \((\bX_t)_{t \in [0,T]}\) and \((\bY_t)_{t \in [0,T]}\) take values in \(\cX\), and let
\(\bK:\cX\times\cX\to\RR\) be a positive definite kernel with associated RKHS \(\cH\).
We lift each path \(\bX,\bY:[0,T]\to\cX\) to the \(\cH\)-valued paths
\[
    \bK_X(t):=\bK(X_t,\cdot)\in\cH,
    \qquad
    \bK_Y(t):=\bK(Y_t,\cdot)\in\cH.
\]
Writing \(\sig(\bK_X)\) and \(\sig(\bK_Y)\) for the signatures of the lifted paths,
the \emph{signature kernel} is
\[
\bK_{\sig}(\bX,\bY)
:=
\bigl\langle \sig(\bK_X),\sig(\bK_Y)\bigr\rangle_{T((\cH))},
\]
where the inner product is taken over the extended tensor algebra \(T((\cH))\) \cite{kiraly2019kernels}.
\end{defn}

\begin{rem}
If the paths \(\bX_t\) and \(\bY_t\) take values in \(\RR^d\), we may take
\(\cX=\RR^d\) and \(\bK(x,x')=\langle x,x'\rangle_{\RR^d}\), so that
\[
    \bK_{\sig}(\bX,\bY)
    =
    \bigl\langle \sig(\bX),\sig(\bY)\bigr\rangle_{T((\RR^d))}.
\]
Unless otherwise stated, all paths in our consideration will take values in \(\RR^d\) for some \(d \ge 1\). For our numerical experiments we use the truncated signature kernel, with the scalar product of the signatures truncated at level \(N\):
\[
    \bK^N_{\sig}(\bX,\bY)
    :=
    \bigl\langle \sig^N(\bX),\sig^N(\bY)\bigr\rangle_{T^N(\RR^d)},
\]

\end{rem}

Besides the linear signature kernel, we will also use a Gaussian (RBF) kernel on signature features. 

\begin{defn}[Truncated RBF Signature Kernel]\label{def:rbfsigkernel}
Let \(\bX\) and \(\bY\) be paths in \(\RR^d\), and let \(\sig^m(\bX)\) and \(\sig^m(\bY)\) denote their signatures truncated up to level \(m\). After flattening the truncated signatures into vectors, we define the truncated RBF signature kernel by
\[
\bK_{\mathrm{RBF\text{-}sig}}^{m}(\bX,\bY) = \exp\left( -\frac{ \left\| \sig^m(\bX) - \sig^m(\bY) \right\|_2^2 }{2\sigma^2} \right), \qquad \sigma>0.
\]
Equivalently, using the Euclidean inner product, we can write
\[
\bK_{\mathrm{RBF\text{-}sig}}^{m}(\bX,\bY) = \exp\left( -\frac{ \left\| \sig^m(\bX) \right\|_2^2 + \left\| \sig^m(\bY) \right\|_2^2  -2 \innerprod{\sig^m(\bX), \sig^m(\bY)}}{2\sigma^2} \right).
\]
\end{defn}

\subsection{Kernel Matrices and Signature Normalizations}

We next describe simple normalizations of signature features and the associated kernel Gram matrices used in our experiments. Throughout this subsection, let \(\bX_1,\ldots,\bX_n\) be paths in \(\RR^d\), let
\(S_i := \sig^m(\bX_i) \in \RR^M\) denote their flattened truncated signatures, where \(M = \sum_{k=0}^{m} d^k\), and let \(S \in \RR^{n\times M}\)
be the matrix with \(S_i\) as its \(i\)-th row.

\begin{defn}[Robust signature matrix]
The \emph{robust signature matrix} $S_{\mathrm{robust}} \in \RR^{n\times M}$
is defined coordinate-wise by
\[
(S_{\mathrm{robust}})_{ij}
= \frac{S_{ij} - \mu_j}{\operatorname{IQR}_j},
\qquad i=1,\ldots,n,\quad j=1,\ldots,M,
\]
where $\mu_j$ and $\operatorname{IQR}_j = Q_{0.75}-Q_{0.25}$ are the column-wise median and inter-quartile range of $S$.
\end{defn}

This transformation rescales each signature coordinate using robust statistics, improving conditioning when constructing the following gram matrices.

\begin{defn}
The \emph{linear signature kernel Gram matrix} $\KK = SS^\top \in \RR^{n\times n}$
has entries
\[
\KK_{ij} = \innerprod{\sig^m(\bX_i), \sig^m(\bX_j)},
\qquad i,j=1,\ldots,n.
\]
\end{defn}

Thus \(\KK\) is simply the Gram matrix of the truncated signature feature vectors under the Euclidean inner product. We may instead apply the Gaussian kernel, providing a nonlinear similarity measure between paths.

\begin{defn}
The \emph{RBF signature kernel Gram matrix} $\KK \in \RR^{n\times n}$
has entries
\[
\KK_{ij} = \exp\!\left(
-\frac{\|S_i - S_j\|_2^2}{2\sigma^2}
\right),
\qquad i,j=1,\ldots,n,\quad \sigma>0.
\]
\end{defn}

\subsection{Sampling methods and Count Method Speedup}
We now describe a novel way to turn a single discretized path into a nested family of training paths and explain how this structure accelerates signature and signature-kernel computations.

\begin{defn}[Count-sampling method]\label{defn:count-sampling}
Let \(\bX = (X^1, \ldots, X^d) : [0, T] \to \R^d\) be a path discretized at \(N\) points \(0 = t_0 < t_1 < \cdots < t_{N-1} = T\), and write \(\bX_{0,\, t_j}\) for the restriction of \(X\) to the prefix interval \([0, t_j]\). The count-sampling construction produces the family of \(N\) nested training paths
\[
\bX_0 := \bX_{t_0,\, t_0},\qquad \bX_1 := \bX_{0,\, t_1},\qquad \ldots,\qquad \bX_{N} := \bX_{0,\, T}.
\]
The first sample \(\bX_0\) is the degenerate two-point path \((\bX(0), \bX(0))\), so that \(\bX_0\) has at least two values and the signature kernel is well-defined on it. This convention lets the initial point be treated uniformly with the longer prefixes and is used by the algorithms below.
\end{defn}

\begin{rem}\label{rmk:count-sampling}
All count-sampled paths \(\bX_0, \ldots, \bX_{N-1}\) share the starting point \(\bX(0)\) but differ in length: \(\bX_j\) is supported on \([0, t_j]\) and contains strictly more path information than its predecessors. From a single observed time series we therefore obtain a hierarchical training set of \(N\) nested paths, ideally suited as inputs to the signature kernel \(\bK_{\sig}\). Throughout the rest of the section, “training paths \(\bX_0, \ldots, \bX_{N-1}\)” refers to this construction.
\end{rem}

When using the count-sampling method, one can exploit Chen's identity to greatly reduce computational cost. Two popular signature libraries, \texttt{keras-sig} and \texttt{signatory}, expose this speedup through a \texttt{stream} argument and support GPU acceleration; we rely on these implementations in our numerical experiments.

Suppose we use the count-sampling method to generate \(N\) nested training paths \(\bX_0, \bX_1, \dots, \bX_{N-1}\), each of dimension \(d\). The \(j\)-th path has length
\[
L_j =
\begin{cases}
2, & j=0,\\
j+1, & j \ge 1,
\end{cases}
\]
with \(L_0 = 2\) ensuring that even the shortest path admits a well-defined truncated signature; this convention is immaterial for the asymptotics. (Here \(N\) is the number of count-sampled paths in this section; this corresponds to \(N{+}1\) in the collocation setup, where paths were indexed \(\bX_0,\ldots,\bX_N\).)

In most numerical libraries, path tensors have shape \((B,L,D)\), where \(B\) is the batch size, \(L\) the path length, and \(D\) the path dimension. The naive approach therefore produces \(N\) separate tensors of size \((1, L_j, d)\) and calls the signature routine on each independently. By the analysis of \cite{kidger2021signatory}, computing the truncated signature of a single length-\(L\) path to depth \(M\) costs \((L-1)\cdot O(d^M)\), so summing over all prefixes yields a total cost of order
\[
O\!\left(\sum_{j=0}^{N-1} (L_j - 1)\,d^M\right)
=
O\!\left(\left(1 + \tfrac{N(N-1)}{2}\right)d^M\right)
=
O(N^2 d^M).
\]

With the count-method speedup, we instead pass only the final prefix tensor of size \((1,N,d)\) to the signature function. It processes the entire stream from left to right, and Chen's identity
\[
\sig^M(\bX_{[t_0,t_{j+1}]}) = \sig^M(\bX_{[t_0,t_j]}) \otimes \sig^M(\bX_{[t_j,t_{j+1}]})
\]
allows us to reuse all previously computed leading increments. The total time complexity becomes
\[
(N-1)\cdot O(d^M) = O(N d^M),
\]
an order-\(N\) improvement over the naive \(O(N^2 d^M)\) scaling. These methods are displayed in Table~\ref{tab:sig_runtime}.

\begin{table}[h]
\centering
\begin{tabular}{lc}
\hline
\textbf{Method} & \textbf{Time} \\
\hline
Naive (independent calls) & $O(N^2 d^M)$  \\
Count method       & $O(N d^M)$  \\
\hline
\end{tabular}
\caption{Runtime comparison for computing all $N$ truncated signature vectors 
to depth $M$ for the count-sampled prefix paths $\bX_0, \ldots, \bX_{N-1}$ 
of dimension $d$. The naive case calls \texttt{signature()} independently 
on each path tensor of size $(1, L_j, d)$. The stream case passes a single 
tensor of size $(1, N, d)$ with \texttt{stream=True}.}
\label{tab:sig_runtime}
\end{table}

Now consider computing the truncated signature-kernel Gram matrix of the prefix paths up to depth \(M\). Previous methods either rely on dynamic-programming techniques \cite{kiraly2019kernels} or on PDE approximations exploiting the fact that the signature kernel satisfies a Goursat PDE \cite{Salvi_2021_goursat}; both are implemented efficiently in the Python package \texttt{PySigLib}. The Goursat approach has cost \(O(N^2 L^2 d)\), and since the maximum path length satisfies \(L = N\) in our count-sampled setting, this becomes \(O(N^4 d)\). A dynamic-programming approach yields \(O(N^2 L^2 M d) = O(N^4 M d)\) for the same \(N \times N\) kernel matrix. 

With the count speedup, we first compute the truncated signatures in \(O(N d^M)\) time, producing an \(N \times C\) matrix of signature vectors \(S\), where row \(j\) holds \(\sig^M(\bX_j)\) flattened into a vector of length \(C = \sum_{k=0}^{M} d^k = O(d^M)\). To form the kernel Gram matrix we then compute \(K = S S^{\top}\), a matrix multiplication costing \(O(N^2 d^M)\). The overall runtime is therefore \(O(N^2 d^M)\), in which the matrix multiplication dominates the signature-computation step. Runtimes for all methods are summarized in Table~\ref{tab:gram_runtime}.

\begin{table}[h]
\centering
\begin{tabular}{lc}
\hline
\textbf{Method} & \textbf{Time} \\
\hline
Goursat PDE  &  $O(N^4 d)$ \\
Dynamic programming  & $O(N^4 M d)$ \\
Count method & $O(N^2 d^M)$ \\
\hline
\end{tabular}
\caption{Runtime comparison for computing the $N \times N$ truncated signature 
kernel Gram matrix $\mathbb{K}_{ij} = \innerprod{ \mathrm{\sig}^M(\bX_i), \mathrm{\sig}^M(\bX_j)}$ 
for the count-sampled prefix paths. $N$ denotes the number of paths, $L \leq N$ the 
maximum path length, $d$ the path dimension, and $M$ the truncation depth. (Note L=N)}
\label{tab:gram_runtime}
\end{table}

The Goursat and dynamic-programming methods inherit only a linear $d$-dependence from the static-kernel evaluations they invoke.  The count method, by contrast, materializes the full truncated signature vector of length $O(d^M)$ and pays for it explicitly in the Gram matrix multiplication.  The two regimes therefore exchange a quartic $N^4$ scaling for an exponential $d^M$ scaling --- a favorable trade-off whenever
\begin{equation*}
N \;\gtrsim\; d^{(M-1)/2},
\end{equation*}
obtained by equating $N^4 d$ with $N^2 d^M$.  Our experiments operate in the regime where $N$ is large and $M$ is small --- the branched signature framework lets us extract sufficient path-wise information at low truncation depth --- so the $N^4 \to N^2$ improvement dominates the unfavorable $d \to d^M$ trade-off.  This regime is exactly the one encountered in time-series data-driven modeling, where long observation records produce many nested prefix paths but only modest signature depth is needed, and the count-method speedup is therefore decisive.

\subsection{Signature kernel for geometric rough path}
In this subsection, we extend the signature kernel from paths of bounded variation to geometric rough paths. In general, \(\alpha\)-H\"older paths with \(\alpha > \frac{1}{4}\) admit geometric rough path lifts, and by Lyons' extension theorem \cite{Friz_Victoir_2010}, the signatures of these lifts are well-defined at all higher levels. This allows us to define the signature kernel for such geometric rough paths as follows, see \cite{kiraly2019kernels}.

\begin{defn}
Let \((\bX_t)_{t \in [0,T]}\) and \((\bY_t)_{t \in [0,T]}\) be \(\alpha\)-H\"older paths with \(\alpha > \tfrac{1}{4}\) taking values in \(\RR^d\), and let \(\sig(\bX)\) and \(\sig(\bY)\) be their signatures defined by Lyons' extension theorem. The signature kernel \(\bK_{\sig}(\bX,\bY)\) is defined for these paths as
\[
    \bK_{\sig}(\bX,\bY)
    :=
    \bigl\langle
    \sig(\bX),\sig(\bY)
    \bigr\rangle_{T((\RR^d))},
\]
where
\[
    \bigl\langle
    \sig(\bX),\sig(\bY)
    \bigr\rangle_{T((\RR^d))}
    :=
    \sum_{n=0}^{\infty}
    \bigl\langle
    \pi_n\bigl(\sig(\bX)\bigr),\pi_n\bigl(\sig(\bY)\bigr)
    \bigr\rangle_{(\RR^d)^{\otimes n}},
\]
whenever the series is well-defined. Here \(\pi_n(\sig(\bX))\in(\RR^d)^{\otimes n}\) denotes the level-\(n\) signature component of \(\bX\).
Equivalently, if \(\bw = w_1\dots w_n\) is a word of length \(n\) from the set of words \(\bW\), then
\[
\bK_{\sig}(\bX,\bY)
=
\sum_{n=0}^{\infty}
\sum_{|\bw|=n}
\langle \sig(\bX),\bw\rangle\,
\langle \sig(\bY),\bw\rangle.
\]
\end{defn}

We now state and prove a universal approximation theorem for the signature kernel on geometric rough paths by combining this definition with the universal approximation theorem for signatures of geometric rough paths.

\begin{thm}[Universal approximation theorem for signature kernel]
Let \(\cK\) be the compact set of paths that admit a geometric rough path lift i.e.,
\[
\cK \subset \left\{\bX: [0,T] \to \RR^d \text{ such that } \bX \text{ admits a geometric rough path lift}\right\}.
\]
Define the compact set of time extended paths \(\widehat{\cK}\) as 
\[
\widehat{\cK} \subset \left\{\widehat{\bX}_t:= (t, \bX_t) \text{ such that } \bX_t \in \cK \right\}.
\]
Then for every continuous function \(f(\widehat{\cK}; \RR)\) and every \(\epsilon > 0\), there exist an integer \(N \ge 0\), paths \(\widehat{\bY}^1, \dots, \widehat{\bY}^M \in \widehat{\cK}\), and scalars \(\alpha_1, \dots \alpha_M\) such that
\[
\sup_{\widehat{\bX} \in \widehat{\cK}}
\left|
f\left(\sig^p(\widehat{\bX})\right)
-
\sum_{i=1}^{M}
\alpha_i
\bK^N_{\sig}(\widehat{\bX},\widehat{\bY}^i)
\right|
<\varepsilon,
\]
where \(\sig^p(\widehat{\bX})\) is the geometric rough path lift of \(\widehat{\bX}\) for some \(p \in \NN\) depending on the H\"older regularity of \(\bX\).
\end{thm}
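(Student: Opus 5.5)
The plan is a two-stage approximation: first approximate $f$ by a linear functional of the truncated signature, then realize that linear functional exactly as a finite combination of truncated signature kernels.

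\textbf{Step 1 (linear functionals of the signature).} Consider the family $\mathcal{L}$ of functions on $\widehat{\cK}$ of the form $\widehat{\bX}\mapsto\langle \ell,\sig(\widehat{\bX})_{0,T}\rangle$ with $\ell\in T(\RR^{1+d})$, i.e.\ finite linear combinations of words. I would check the hypotheses of Stone--Weierstrass on the compact set $\widehat{\cK}$ for the rough-path topology.
\begin{itemize}
\item Each element of $\mathcal{L}$ is continuous, by continuity of the Lyons extension in $p$-variation/H\"older rough path metric.
\item $\mathcal{L}$ contains the constants, via the empty word.
\item $\mathcal{L}$ is an algebra, by the shuffle property; for geometric rough paths this is inherited from smooth approximations.
\item $\mathcal{L}$ separates points, by Lemma~\ref{sig_uiqueness}. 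This needs the elements of $\cK$ to be $\alpha$-H\"older with $\alpha>1/4$ and started at $0$, which is implicit in the standing assumptions and should be made explicit. Distinct time-extended paths then have distinct terminal signatures, so some word $\bw$ separates them.
\end{itemize}
Stone--Weierstrass then gives $\ell=\sum_{|\bw|\le N}c_\bw\,\bw$ with $\sup_{\widehat{\cK}}|f-\langle \ell,\sig(\widehat{\bX})\rangle|<\varepsilon$. Since $\ell$ is a finite word combination, this truncation level $N$ is the $N$ appearing in $\bK^N_{\sig}$.

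\textbf{Step 2 (representing $\ell$ through the kernel).} Note that $\bK^N_{\sig}(\widehat{\bX},\widehat{\bY})=\langle \sig^N(\widehat{\bX}),\sig^N(\widehat{\bY})\rangle$. It therefore suffices to write $\ell$, or its orthogonal projection, as $\sum_i\alpha_i\sig^N(\widehat{\bY}^i)$.

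Let $V=\mathrm{span}\{\sig^N(\widehat{\bY}):\widehat{\bY}\in\widehat{\cK}\}\subset T^N(\RR^{1+d})$. This is a subspace of a finite-dimensional space, so it is spanned by finitely many $\sig^N(\widehat{\bY}^1),\dots,\sig^N(\widehat{\bY}^M)$.

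Decompose $\ell=\ell_V+\ell_\perp$ with $\ell_V\in V$ and $\ell_\perp\perp V$. Then $\langle\ell_\perp,\sig^N(\widehat{\bX})\rangle=0$ for every $\widehat{\bX}\in\widehat{\cK}$, so
\[
\langle \ell,\sig^N(\widehat{\bX})\rangle=\langle \ell_V,\sig^N(\widehat{\bX})\rangle=\sum_i\alpha_i\bK^N_{\sig}(\widehat{\bX},\widehat{\bY}^i)
\]
for suitable scalars $\alpha_i$. This identity is exact, so the error is the $\varepsilon$ from Step 1. Finally, I would interpret the notation $f(\sig^p(\widehat{\bX}))$ as $f$ evaluated on the lifted path, since continuity of $f$ on $\widehat{\cK}$ is understood with respect to the rough path topology of the lift.

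\textbf{Main obstacle.} I expect Step 1 to be the delicate point, specifically justifying continuity and point separation on $\widehat{\cK}$. The topology on $\widehat{\cK}$ must be the rough path metric of the lifts. Point separation also requires that the geometric lift be determined by the path itself, or else that separation be stated at the level of the lifted rough paths. Lemma~\ref{sig_uiqueness} is exactly the tool for separation. For continuity, I would invoke the continuity of the Itô--Lyons extension map from \cite{Friz_Victoir_2010}. Step 2 is routine finite-dimensional linear algebra.
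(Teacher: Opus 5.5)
Your proposal is correct and follows the paper's proof. The paper likewise first obtains a finite linear functional $\ell\in T^{(N)}(\RR^{d+1})$ with $\sup_{\widehat{\bX}\in\widehat{\cK}}|f-\langle\sig^N(\widehat{\bX}),\ell\rangle|<\varepsilon$, and then replaces $\ell$ by its orthogonal projection onto $M_N=\operatorname{span}\{\sig^N(\widehat{\bY}):\widehat{\bY}\in\widehat{\cK}\}$ to get the exact kernel expansion. The only difference is that the paper cites the signature universal approximation theorem of \cite{ali2025branchedsignaturemodel} for your Step 1, while you re-derive it via Stone--Weierstrass and Lemma~\ref{sig_uiqueness}, which usefully exposes the hypotheses the statement leaves implicit ($\bX_0=0$, $\alpha>\tfrac14$, and the rough-path topology on $\widehat{\cK}$).
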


\begin{proof}
Leveraging the universal approximation theorem for signatures of geometric rough paths \cite{ali2025branchedsignaturemodel}, we have that for every \(f\in C(\widehat{\cK};\RR)\) and every \(\varepsilon>0\), there exist
\(N\geq 0\) and \(\ell\in T^{(N)}(\RR^{d+1})\) such that
\begin{equation} \label{sig UAT}
\sup_{\widehat{\mathbf X}\in\widehat{\cK}}
\left|
f\left(\sig^p(\widehat{\bX})\right)
-
\left\langle
\sig^N(\widehat{\mathbf X})
, \ell \right\rangle
\right|
<\varepsilon.
\end{equation}
Now, let
\[
M_N
:=
\operatorname{span}
\left\{
\sig^N(\widehat{\mathbf Y})
:
\widehat{\mathbf Y}\in\widehat{\cK}
\right\}
\subset T^{(N)}(\RR^{d+1}).
\]
Let \(\cP_{M_N}\) be the orthogonal projection onto \(M_N\), and define
\[
\ell_N:=\cP_{M_N}\ell.
\]
For every \(\widehat{\mathbf X}\in\widehat{\cK}\), we have
\[
\sig^N(\widehat{\mathbf X})\in M_N.
\]
Therefore,
\[
\left\langle \sig^N(\widehat{\mathbf X}),\ell \right\rangle
=
\left\langle \sig^N(\widehat{\mathbf X}), \cP_{M_N}\ell \right\rangle
=
\left\langle \sig^N(\widehat{\mathbf X}), \ell_N \right\rangle.
\]
Since \(\ell_N\in M_N\), by the definition of \(M_N\), there exist \(\widehat{\mathbf Y}^{1},\ldots,\widehat{\mathbf Y}^{M}\in\widehat{\cK}\) and scalars \\ \(\alpha_1,\ldots,\alpha_M\in\RR\) such that
\[
\ell_N
=
\sum_{i=1}^{M}
\alpha_i
\sig^N(\widehat{\mathbf Y}^{i}).
\]
Hence, for every \(\widehat{\mathbf X}\in\widehat{\cK}\),
\[
\begin{aligned}
\left\langle
\sig^N(\widehat{\mathbf X}), \ell
\right\rangle
&=
\left\langle
\sig^N(\widehat{\mathbf X}), \ell_N
\right\rangle 
=
\left\langle
\sig^N(\widehat{\mathbf X}), 
\sum_{i=1}^{M}
\alpha_i
\sig^N(\widehat{\mathbf Y}^{i})
\right\rangle \\
&=
\sum_{i=1}^{M}
\alpha_i
\left\langle
\sig^N(\widehat{\mathbf X}),
\sig^N(\widehat{\mathbf Y}^{i})
\right\rangle 
=
\sum_{i=1}^{M}
\alpha_i
\bK_{\sig}^{N}(\widehat{\mathbf X},\widehat{\mathbf Y}^{i}).
\end{aligned}
\]
Combining this identity with ~\eqref{sig UAT} gives
\[
\sup_{\widehat{\bX} \in \widehat{\cK}}
\left|
f\left(\sig^p(\widehat{\bX})\right)
-
\sum_{i=1}^{M}
\alpha_i
\bK^N_{\sig}(\widehat{\bX},\widehat{\bY}^i)
\right|
<\varepsilon.
\]
This proves the theorem.
\end{proof}

%%%%%%%%%%%%%%%%%%%%%%%%%%%%%%%%%%
\section{Branched Signature Kernel}\label{sec: brached}
In this section we first define the branched signature and the branched signature kernel. We then describe some avaliable ways to construct the branches signature components though lifting paths.

\subsection{Branched Signature Kernel}\label{subsec: brached}
To begin with, let us define a Hopf algebra and Connes–Kreimer Hopf algebra of rooted trees, a special example of Hopf algebra where branched signature lives. 

A Hopf algebra \(\hopf\) is an algebraic structure that contains a product \(\cdot : \hopf \hat{\otimes} \hopf \to \hopf \), which tells how to combine two objects; a coproduct \(\Delta: \hopf \to \hopf \hat{\otimes} \hopf\), which tells how to split an object; and an antipode \(\cS :  \hopf \to \hopf\), which is the way to encode inverses. Let \(\hopf^*\) be the dual of \(\hopf\). The product \(\star\) on the dual space \(\hopf^*\) is defined via coproduct on \(\hopf\) as
\[ \innerprod{f \star g, h} = \innerprod{f \; \hat{\otimes}\; g, \Delta h},\]
for any \(f,g \in \hopf^*\) and \(h \in \hopf\). 

The Connes--Kreimer Hopf algebra of rooted trees is a special type of Hopf algebra which is used in the setting of branched rough paths and branched signatures. The iterated integrals for geometric rough paths are encoded by words \(\bw\) made from the alphabet \(\cA = \{1,2,\dots,d\}\). However, in the case of branched rough paths, the iterated integrals are encoded by decorated rooted trees, with decorations coming from the set \(\cA\). For instance, the iterated integral
\[
\int_s^t \left(\int_s^u d\bX^{i}_r\right) \left(\int_s^u d\bX^{j}_r\right) d\bX^{k}_u
\]
is encoded by the tree
\[
\Forest{decorated[k[i][j]]}.
\]
The Connes--Kreimer Hopf algebra of rooted trees \(\hopf\) \cite{connes1999hopf} is the polynomial algebra in which rooted trees play the role of variables. The product of trees is given by placing trees side by side to form a forest, while the coproduct is given by all possible ways of cutting a tree into smaller trees. The Connes--Kreimer Hopf algebra of rooted trees \(\hopf\) is the appropriate algebraic structure in which the branched signature lives. The definition of the branched signature follows from \cite{gubinelli2010ramification}.

\begin{defn}
 Let $\cA=\{1,\dots,d\}$ be an alphabet set for a given \(d\)-dimensional path \(\bX: [0,T] \to \RR^d\). Let $\trees$ be the set of rooted trees with vertices decorated by letters in $\cA$. Let $\hopf$ be the Connes–Kreimer Hopf algebra of rooted trees generated by trees is $\trees$, with product given by disjoint union of forests and unit $\mathbf 1$. We define \emph{branched signature} of \(\bX\) as a functional on \(\hopf\) given by
\begin{equation}
\bsig(\bX)_{st} =  \sum\limits_{\tau \; \in \; \mathcal T, \; |\tau| \; \le \; N} \innerprod{\bsig(\bX)_{st},\tau} \se_{\tau},
\end{equation}
where for each \(\tau \in \hopf\) the component \(\innerprod{\bsig(\bX)_{st},\tau}\) of the branched signature is recursively defined as
\[\innerprod{\bsig(\bX)_{st},\mathbf{1}} = 1, \quad \text{ and } \quad \innerprod{\bsig(\bX)_{st},\tau} = \int_s^t \innerprod{\bsig(\bX)_{su},\tau'} d\bx^{\mathbf{r}}_u,\]
where \(\tau\) is the tree that we get by grafting the root vertex \(\mathbf{r}\) to  \(\tau'\) i.e.,  \(\tau = [\tau']_{\mathbf{r}}\).
\end{defn}
\begin{defn}[Branched signature kernel]
Let \(\left(\bX_t\right)_{t \in [0,T]}\) and \(\left(\bY_t\right)_{t \in [0,T]}\) be \(\alpha\)-H\"older paths for
\(\alpha > \frac{1}{4}\) taking values in \(\RR^d\). Let \(\bsig(\bX)\) and \(\bsig(\bY)\) denote their branched signatures, whose components are indexed by decorated rooted forests in the Connes--Kreimer Hopf algebra \(\hopf\). The branched signature kernel \(\bK_{\bsig}(\bX,\bY)\) is defined as
\[
    \bK_{\bsig}(\bX,\bY) := \left\langle \bsig(\bX),\bsig(\bY) \right\rangle_{\hopf^*},
\]
where
\[
    \left\langle
    \bsig(\bX),\bsig(\bY)
    \right\rangle_{\hopf^*}
    :=
    \sum_{n=0}^{\infty}
    \left\langle
    \pi_n\left(\bsig(\bX)\right),
    \pi_n\left(\bsig(\bY)\right)
    \right\rangle_{\hopf_{n}^*},
\]
whenever the series makes sense. Also,
\[
    \pi_n\left(\bsig(\bX)\right)\in \hopf_{n}^*
\]
denotes the degree-\(n\) component of the branched signature and \(\hopf_{n}^*\) denotes the dual of the space spanned by decorated rooted forests with exactly \(n\) vertices.

Equivalently, if \(\cF_n\) denotes the set of decorated rooted forests of degree \(n\), then
\[
    \bK_{\bsig}(\bX,\bY)
    =
    \sum_{n=0}^{\infty}
    \sum_{\tau\in \cF_n}
    \left\langle \bsig(\bX),\tau\right\rangle
    \left\langle \bsig(\bY),\tau\right\rangle.
\]
\end{defn}
We prove the universal approximation theorem for the branched signature kernel below.
\begin{thm} \label{UAT_bsig_kernel}
Let \(\cK\) be the compact subset of paths that admit a branched rough path lift,
\[
\cK \subset \left\{\bX: [0,T] \to \RR^d \text{ such that } \bX \text{ admits a branched rough path lift}\right\}.
\]
Define the compact set of time extended paths \(\widehat{\cK}\) as 
\[
\widehat{\cK} \subset \left\{\widehat{\bX}_t:= (t, \bX_t) \text{ such that } \bX_t \in \cK \right\}.
\]
Then for every continuous function \(f(\widehat{\cK}; \RR)\) and every \(\epsilon > 0\), there exist an integer \(N \ge 0\), paths \(\widehat{\bY}^1, \dots, \widehat{\bY}^M \in \widehat{\cK}\), and scalars \(\alpha_1, \dots \alpha_M\) such that
\[
\sup_{\widehat{\bX} \in \widehat{\cK}}
\left|
f\left(\bsig^p(\widehat{\bX})\right)
-
\sum_{i=1}^{M}
\alpha_i
\bK^N_{\bsig}(\widehat{\bX},\widehat{\bY}^i)
\right|
<\varepsilon,
\]
where \(\bsig^p(\widehat{\bX})\) is the branched rough path lift of \(\widehat{\bX}\) for some \(p \in \NN\) depending on the H\"older regularity of \(\bX\).
\end{thm}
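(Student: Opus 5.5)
We mirror the proof of the geometric universal approximation theorem, with the Hairer--Kelly morphism as the bridge. The argument has two stages. First we obtain a linear functional on truncated branched signatures that approximates $f$. Then we replace that functional by a finite combination of kernel sections through an orthogonal projection.

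\emph{Step 1: reduction to the geometric case.} The Hairer--Kelly construction \cite{hairer2015geometric} gives, for a branched rough path $\bsig^p(\widehat{\bX})$, a geometric rough path $\bar{\bX}$ over an extended space. Its coordinates are indexed by trees $\tau\in\trees$ with $|\tau|\le p$, and the original coordinates are included among them. There is also a Hopf algebra morphism $\psi:\hopf\to T(\RR^{\trees})$ with
\[
\innerprod{\bsig(\widehat{\bX}),\tau}=\innerprod{\sig(\bar{\bX}),\psi(\tau)}
\]
for every $\tau$. The time coordinate is a smooth component and survives the construction.

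We show that the map $\widehat{\bX}\mapsto\bsig(\widehat{\bX})$ is injective on $\widehat{\cK}$. The words in the single letters $0,1,\dots,d$ are exactly the linear trees, and these are among the branched components. Hence two time-extended paths with the same branched signature have the same geometric signature of their time extension, and Lemma~\ref{sig_uiqueness} gives $\bX=\bY$. The same inclusion of linear trees shows that $\bsig$ is a continuous function of the rough path lift.

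\emph{Step 2: a branched signature UAT.} The linear functionals $\tau\mapsto\innerprod{\bsig(\widehat{\bX}),\tau}$, for $\tau$ in the span of forests, form a unital algebra of continuous functions on $\widehat{\cK}$. They are closed under pointwise products because the branched signature is a character: the product of two forest functionals is the functional of the disjoint union. By Step 1 this algebra separates points. The set $\widehat{\cK}$ is compact, so Stone--Weierstrass gives $N$ and $\ell$ in the span of forests of degree at most $N$ with
\[
\sup_{\widehat{\bX}\in\widehat{\cK}}\bigl|f(\bsig^p(\widehat{\bX}))-\innerprod{\bsig^N(\widehat{\bX}),\ell}\bigr|<\varepsilon .
\]
Alternatively, one can take this statement from \cite{ali2025branchedsignaturemodel}, since the Hairer--Kelly morphism transfers the geometric UAT there. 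Either way we obtain the branched analogue of the estimate \eqref{sig UAT}.

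\emph{Step 3: the projection argument.} The space $\hopf^*_{\le N}$ is finite dimensional, because there are finitely many decorated forests with at most $N$ vertices. Equip it with the inner product that makes forests orthonormal, which is exactly the one defining $\bK^N_{\bsig}$. Let
\[
M_N=\operatorname{span}\{\bsig^N(\widehat{\bY}):\widehat{\bY}\in\widehat{\cK}\}
\]
and set $\ell_N:=\cP_{M_N}\ell$. Then $\innerprod{\bsig^N(\widehat{\bX}),\ell}=\innerprod{\bsig^N(\widehat{\bX}),\ell_N}$ for every $\widehat{\bX}\in\widehat{\cK}$. Since $M_N$ is finite dimensional, the span needs no closure, so $\ell_N=\sum_{i=1}^M\alpha_i\bsig^N(\widehat{\bY}^i)$ for some finite choice of paths and scalars. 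Expanding the inner product gives $\sum_{i=1}^M\alpha_i\bK^N_{\bsig}(\widehat{\bX},\widehat{\bY}^i)$, and the bound follows.

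The main obstacle is Step 2, specifically continuity and point separation on $\widehat{\cK}$. For continuity we must fix the topology on $\widehat{\cK}$ to be the branched $p$-variation (or Hölder) rough path metric, under which $\bsig^N$ depends continuously on the lift by the branched extension theorem. For separation, Lemma~\ref{sig_uiqueness} as stated needs $\alpha>\tfrac14$. If $\cK$ allows rougher paths, we would instead use the geometric uniqueness for the time-extended Hairer--Kelly lift, and then recover $\bX$ from its first-level components.
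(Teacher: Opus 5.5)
Your proposal is correct and is essentially the paper's proof: Step~3 is exactly the paper's projection of the approximating functional onto $M_N=\operatorname{span}\{\bsig^N(\widehat{\bY}):\widehat{\bY}\in\widehat{\cK}\}$ followed by expansion into kernel sections, and your citation route for Step~2 is how the paper obtains \eqref{bsig UAT} (your remark that $\hopf^*_{\le N}$ is finite dimensional, so the span needs no closure, makes explicit what the paper leaves implicit). One caution on your optional Stone--Weierstrass route: for a non-geometric branched lift the linear-tree components need not satisfy the shuffle identities, so they are not the geometric signature of $\widehat{\bX}$ and Lemma~\ref{sig_uiqueness} cannot be applied as in Step~1; point separation should instead come from the time--space components, e.g.\ the Young integrals $\int_0^T t^k\,\d\bX_t$, whose values determine $\bX$.
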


\begin{proof}
Using the universal approximation theorem for signatures of branched rough paths \cite{ali2025branchedsignaturemodel}, we have that for every \(f\in C(\widehat{\cK};\RR)\) and every \(\varepsilon>0\), there exist
\(N\geq 0\) and \(\tau\in \hopf\) such that
\begin{equation} \label{bsig UAT}
\sup_{\widehat{\mathbf X}\in\widehat{\cK}}
\left|
f\left(\bsig^p(\widehat{\bX})\right)
-
\left\langle
\bsig^N(\widehat{\mathbf X})
, \tau \right\rangle
\right|
<\varepsilon,
\end{equation}
where \(\hopf\) is the Hopf algebra of decorated rooted trees with the decoration coming from \(\cA \cup \{0\}\) with \(\cA := \{ 1,2,\dots,d\}\) and \(0\) corresponding to the time component.
Now, let
\[
M_N
:=
\operatorname{span}
\left\{
\bsig^N(\widehat{\mathbf Y})
:
\widehat{\mathbf Y}\in\widehat{\cK}
\right\}
\subset \hopf^*.
\]
Let \(\cP_{M_N}\) be the orthogonal projection onto \(M_N\), and define
\[
\tau_N:=\cP_{M_N}\tau.
\]
For every \(\widehat{\mathbf X}\in\widehat{\cK}\), we have
\[
\bsig^N(\widehat{\mathbf X})\in M_N.
\]
Therefore,
\[
\left\langle \bsig^N(\widehat{\mathbf X}),\tau \right\rangle
=
\left\langle \bsig^N(\widehat{\mathbf X}), \cP_{M_N}\tau \right\rangle
=
\left\langle \bsig^N(\widehat{\mathbf X}), \tau_N \right\rangle.
\]
Since \(\tau_N\in M_N\), by the definition of \(M_N\), there exist \(\widehat{\mathbf Y}^{1},\ldots,\widehat{\mathbf Y}^{M}\in\widehat{\cK}\) and scalars \\ \(\alpha_1,\ldots,\alpha_M\in\RR\) such that
\[
\tau_N
=
\sum_{i=1}^{M}
\alpha_i
\bsig^N(\widehat{\mathbf Y}^{i}).
\]

Hence, for every \(\widehat{\mathbf X}\in\widehat{\cK}\),
\[
\begin{aligned}
\left\langle
\bsig^N(\widehat{\mathbf X}), \tau
\right\rangle
&=
\left\langle
\bsig^N(\widehat{\mathbf X}), \tau_N
\right\rangle 
=
\left\langle
\bsig^N(\widehat{\mathbf X}), 
\sum_{i=1}^{M}
\alpha_i
\bsig^N(\widehat{\mathbf Y}^{i})
\right\rangle \\
&=
\sum_{i=1}^{M}
\alpha_i
\left\langle
\bsig^N(\widehat{\mathbf X}),
\bsig^N(\widehat{\mathbf Y}^{i})
\right\rangle 
=
\sum_{i=1}^{M}
\alpha_i
\bK_{\bsig}^{N}(\widehat{\mathbf X},\widehat{\mathbf Y}^{i}).
\end{aligned}
\]
Combining this identity with ~\eqref{bsig UAT} gives
\[
\sup_{\widehat{\bX} \in \widehat{\cK}}
\left|
f\left(\bsig^p(\widehat{\bX})\right)
-
\sum_{i=1}^{M}
\alpha_i
\bK^N_{\bsig}(\widehat{\bX},\widehat{\bY}^i)
\right|
<\varepsilon.
\]
This proves the claim.
\end{proof}

Instead of computing the branched signature kernel of the paths \(\bX\) and \(\bY\), we compute the geometric signature kernel of the extended paths \(\Bar{\bX}\) and \(\Bar{\bY}\) and leverage the Hairer--Kelly morphism \cite{hairer2015geometric} to show that the geometric signature kernel of extended paths gives the information that a branched signature kernel contains.

The Hairer--Kelly morphism \(\Psi\) is a Hopf algebra morphism from the Connes--Kreimer Hopf algebra of rooted trees \(\hopf\) to shuffle Hopf algebra over vector space \(\cV\) spanned by rooted trees i.e.,
\[
\Psi: (\hopf, \cdot, \Delta) \to (T(\mathcal{V}), \shuffle, \Bar{\Delta}).
\]
For a tree \(\tau \in \cF_n\), the morphism \(\Psi\) is defined as
\[
\Psi(\tau) = \tau + \Psi_{n-1}(\tau),
\]
where \(\Psi_{n-1}(\tau)\) is all the smaller trees coming from cutting the tree \(\tau\) in all possible ways.

The existence of such a morphism \(\Psi\) and the extended path \(\Bar{\bX}\) is proved in \cite{hairer2015geometric}. The main theorem in the paper states that for a branched rough path \(\bsig^N(\bX)\), one can construct the extended path \(\Bar{\bX}\) and the geometric rough path  \(\sig^N(\bX)\) such that
\[
\innerprod{\bsig^N(\bX)_{st}, \tau} = \innerprod{\sig^N(\Bar{\bX})_{st}, \Psi(\tau)},
\]
for all \(\tau \in \cF_n\). 

This result can also be applied to branched and geometric signatures using the extension theorems from rough paths to signatures. We state and prove the universal approximation theorem that claims that any function of the branched rough path can be approximated by the signature kernel of the extended paths. The precise statement and proof of the theorem are given as follows.

\begin{thm}
Let \(\cK\) be the compact subset of time-extended paths that admit a branched rough path lift i.e.,
\[
\cK \subset \left\{\bX: [0,T] \to \RR^{d+1} \text{ such that  }  \bX = (t,\bX^-) \text{ and  } \bX^- \text{ admits a branched rough path lift}\right\}
\]
Then for every continuous function \(f(\cK; \RR)\) and every \(\epsilon > 0\), there exist an integer \(N \ge 0\), paths \(\bY^1, \dots, \bY^M \in \cK\), and scalars \(\alpha_1, \dots \alpha_M\) such that
\[
\sup_{\bX \in \cK}
\left|
f\left(\bsig^p(\bX)\right)
-
\sum_{i=1}^{M}
\alpha_i
\bK^N_{\Psi, \sig}(\Bar{\bX},\Bar{\bY}^i)
\right|
<\varepsilon,
\]
where \(\bsig^p(\bX)\) is the branched rough path lift of \(\bX\) for some \(p \in \NN\) depending on the H\"older regularity of \(\bX^{-}\), \(\Bar{\bX},\Bar{\bY}^i\) are the extended paths constructed via extension map of Hairer-Kelly, and \(\bK^N_{\Psi, \sig}(\Bar{\bX},\Bar{\bY}^i)\) is defined as
\[
\bK^N_{\Psi, \sig}(\Bar{\bX},\Bar{\bY}^i) := \sum_{n=0}^{N} \sum_{\tau\in \cF_n} \left\langle                                                  \sig(\Bar{\bX}),\Psi(\tau)\right\rangle \left\langle                                                           \sig(\Bar{\bY^i}),\Psi(\tau)\right\rangle.
\]
\end{thm}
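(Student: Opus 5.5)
The plan is to reduce the statement to Theorem~\ref{UAT_bsig_kernel} through the Hairer--Kelly identity
\[
\innerprod{\bsig^N(\bX)_{st}, \tau} = \innerprod{\sig^N(\Bar{\bX})_{st}, \Psi(\tau)}, \qquad \tau\in\cF_n,\ n\le N .
\]
This identity will show that the kernel $\bK^N_{\Psi,\sig}$ is just the truncated branched signature kernel $\bK^N_{\bsig}$, rewritten in terms of the extended paths. The work then splits into two steps: an algebraic identification of kernels, followed by an application of the already-proved universal approximation result for $\bK^N_{\bsig}$.

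\textbf{Step 1 (kernel identification).} Fix $\bX,\bY\in\cK$ and let $\Bar{\bX},\Bar{\bY}$ be their Hairer--Kelly extensions. Apply the identity above at $(s,t)=(0,T)$, separately for each $\tau\in\cF_n$ with $n\le N$. For forests (products of trees) it extends multiplicatively, because $\Psi$ is a Hopf algebra morphism into the shuffle algebra and both signatures are characters: $\bsig$ is multiplicative on forests, and $\sig$ satisfies the shuffle property. Substituting termwise into the definition of $\bK^N_{\Psi,\sig}$ gives
\[
\bK^N_{\Psi,\sig}(\Bar{\bX},\Bar{\bY})=\sum_{n=0}^N\sum_{\tau\in\cF_n}\innerprod{\bsig(\bX),\tau}\innerprod{\bsig(\bY),\tau}=\bK^N_{\bsig}(\bX,\bY).
\]
This is a finite sum, so no convergence issues arise.

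\textbf{Step 2 (approximation).} Here $\cK$ consists of time-extended paths $(t,\bX^-)$ whose spatial part admits a branched lift, so it plays the role of $\widehat{\cK}$ in Theorem~\ref{UAT_bsig_kernel}. That theorem, applied to $f\in C(\cK;\RR)$ and $\varepsilon>0$, gives $N$, paths $\bY^1,\dots,\bY^M\in\cK$ and scalars $\alpha_i$ with
\[
\sup_{\bX\in\cK}\Bigl|f(\bsig^p(\bX))-\sum_{i=1}^M\alpha_i\bK^N_{\bsig}(\bX,\bY^i)\Bigr|<\varepsilon .
\]
Replacing each $\bK^N_{\bsig}(\bX,\bY^i)$ by $\bK^N_{\Psi,\sig}(\Bar{\bX},\Bar{\bY}^i)$ via Step~1 gives the claim with the same $N$, $M$, $\alpha_i$, $\bY^i$.

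\textbf{Main obstacle.} The delicate point is Step~1, specifically the use of the Hairer--Kelly correspondence at the level of full (extended) signatures rather than truncated rough paths. The correspondence is stated for the rough path of degree $\lfloor 1/\alpha\rfloor$. For degree $n\le N$ with $N$ larger, I would invoke Lyons' extension theorem on both sides. Uniqueness of the extension, together with the fact that $\Psi$ is compatible with the coproducts, ensures that the extended branched signature and the extended geometric signature of $\Bar{\bX}$ still satisfy the identity at all degrees.

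A further subtlety is that the extension $\Bar{\bX}$ is not canonical; it depends on choices in the Hairer--Kelly construction. I would fix one measurable choice $\bX\mapsto\Bar{\bX}$ for each path. This suffices because the identity holds for any admissible extension, so the kernel value does not depend on the choice.
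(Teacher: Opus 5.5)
Your proposal is correct and follows the paper's own proof. The paper likewise uses the Hairer--Kelly identity $\innerprod{\bsig(\bX),\tau}=\innerprod{\sig(\Bar{\bX}),\Psi(\tau)}$ to rewrite $\bK^N_{\bsig}(\bX,\bY^i)$ termwise as $\bK^N_{\Psi,\sig}(\Bar{\bX},\Bar{\bY}^i)$, and then invokes Theorem~\ref{UAT_bsig_kernel}; your extra remarks (extension to forests by multiplicativity, passage to all degrees via uniqueness of the rough-path extension, and independence from the choice of $\Bar{\bX}$) spell out what the paper only asserts as ``this should also be true for truncated signatures.''
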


\begin{proof}
By the extension of Hairer-Kelly, for a path \(\bX\) that admits a branched rough path, there exists a map \(\Psi\) and an extended path \(\Bar{\bX}\) such that each component of the branched signature corresponding a tree \(\tau \in \hopf\) can be identified as the component of signature of \(\Bar{\bX}\) corresponding to \(\Psi(\tau)  \in T(\cV)\) i.e.,
\[
\innerprod{\bsig(\bX), \tau} = \innerprod{\sig(\Bar{\bX}), \Psi(\tau)}.
\]
This should also be true for truncated signatures. Therefore, for paths \(\bX\) and \(\bY_i\)
\[
\begin{aligned}
 \bK^N_{\bsig}(\bX,\bY^i) 
& = 
 \sum_{n=0}^{N} \sum_{\tau\in \cF_n} \left\langle                                                  \bsig(\bX),\tau\right\rangle \left\langle                                                           \bsig(\bY^i),\tau\right\rangle\\
& =  \sum_{n=0}^{N} \sum_{\tau\in \cF_n} \left\langle                                                  \sig(\Bar{\bX}),\Psi(\tau)\right\rangle \left\langle                                                           \sig(\Bar{\bY^i}),\Psi(\tau)\right\rangle
 =  \bK^N_{\Psi, \sig}(\Bar{\bX},\Bar{\bY}^i).
\end{aligned}
\]
Hence, the claim follows by Theorem~\ref{UAT_bsig_kernel}.
\end{proof}

\subsection{Path Lifts}
The universal approximation theorems above guarantee that any continuous functional of a branched rough path can be approximated via the signature kernel of an extended path $\Bar{\bX}$, but they do not prescribe how to construct the extension. We consider two approaches: an explicit \(t\)--value lift, and a learned neural network lift.
\begin{defn}[\(t\)--value lift]\label{def: tlift}
Suppose we observe a stream of forcing data $(t,F(t)) \in \mathbb R^2$. This stream of data can be lifted into $\mathbb R^3$ by augmenting the path to $(t,F(t),t^\alpha) \in \mathbb R^3$ where $\alpha \in (0,1)$.
\end{defn}
The motivation for the \(t\)--value lift comes from the case where the forcing \(F(t)\) is modeled by a fractional Brownian motion (fBm) \(\bB^H(t)\) with Hurst parameter \(H\in(0,\frac{1}{2})\). In this case, the covariance of fBm satisfies
\[
\mathbb{E}\left[\bB^H(t)\bB^H(t)\right]=t^{2H}.
\]
Thus, the deterministic component \(t^\alpha\) can be viewed as a way to encode the covariance related correction associated with the rough forcing. In particular, when \(\alpha=2H\), this lift agrees with the type of extended component appearing in the branched-to-geometric realization used in \cite{ali2025branchedsignaturemodel}.

In cases where the form of the driving signal is not known beforehand, one cannot prescribe an explicit path extension by hand. The path following a Brownian motion, fractional Brownian motion, or other rough frameworks would require a different extension framework. We therefore aim to construct an extension in a data dependent way. Neural Network path extensions were first considered in \cite{DeepSignatureTransforms} and noted as a way to increase the expressive power of the signature. The general method is mostly applied using convolution layers, recurrent neural networks, and sliding window architectures to lift the path before signatures are taken. In \cite{ali2025branchedsignaturemodel}, a neural network lift is applied using an MLP to construct a specific mapping that encodes the branched rough path information before taking signatures. We will apply this framework in our paper.

\begin{defn}[Neural Network Lift] \label{def: NN Lift}
Let $\bX_t: [0,T] \to \mathbb{R}^{d+1}$ be a time--extended path. We define a trainable map
$$
\Phi_{\theta}: \mathbb{R}^{d+1} \rightarrow \mathbb{R}^{m},
$$
with learnable parameters $\theta$. The learned coordinates may be defined as 
$$
\Bar \bX^{\theta}_t = \Phi_{\theta} (\bX_t) \in \mathbb{R}^m , \quad t \in [0,T].
$$
The lifted path is then constructed by concatenating the learned features with the original path
$$
\bar{\mathbb{X}}^{\theta}_t = (\bX_t, \Bar \bX^{\theta}_t) \in \mathbb{R}^{d+1+m} , \quad t \in [0,T].
$$
\end{defn}
The goal is to train for an extension which both improves model loss while staying consistent with the branched framework. We use a shuffle loss which enforces the integration by parts identity. The role of the shuffle loss is to make the learned coordinates \(\Bar{\bX}_\theta\) closer to geometric signature coordinates. At second order, this means that the product of two increments should agree with the sum of the two ordered iterated integrals, as in the integration-by-parts identity.

\begin{defn}[Shuffle Loss]
For \(a,b\in\{1,\ldots,m\}\) and \(t_i\in\pi[0,T]\), define
\[
D_i^a(\theta):=\Bar{\bX}_\theta^a(t_i)-\Bar{\bX}_\theta^a(t_0), \quad  \text{ and } \quad I_i^{a,b}(\theta)
:=
\int_{t_0}^{t_i}
\bigl(\Bar{\bX}_\theta^a(s)-\Bar{\bX}_\theta^a(t_0)\bigr)\,d\Bar{\bX}_\theta^b(s).
\]
The second-order shuffle residual is
\[
R_i^{a,b}(\theta)
:=
D_i^a(\theta)D_i^b(\theta)
-
I_i^{a,b}(\theta)
-
I_i^{b,a}(\theta).
\]
We then define the shuffle loss by
\[
\mathcal L_{\mathrm{shuffle}}(\theta)
:=
\frac{1}{N}
\sum_{t_i\in\pi[0,T]}
\sum_{a,b=1}^{m}
\left|R_i^{a,b}(\theta)\right|^2,
\]
where \(N=|\pi[0,T]|\) is the number of training time points.
\end{defn}
The integrals in \(I_i^{a,b}(\theta)\) are not computed through common signature libraries such as \textit{iisignature}, \textit{signatory} or \textit{keras\_sig}, since those libraries return geometric signatures and therefore satisfy the shuffle identities by construction. Instead, we use left-point Riemann sums, so that the non-geometric deviation can be measured directly. More precisely, if
\[
t_0=s_0<s_1<\cdots<s_{r_i}=t_i
\]
is a partition of \([t_0,t_i]\), then
\[
I_i^{a,b}(\theta)
:=
\sum_{\ell=1}^{r_i}
\bigl(\Bar{\bX}_\theta^a(s_{\ell-1})-\Bar{\bX}_\theta^a(s_0)\bigr)
\bigl(\Bar{\bX}_\theta^b(s_\ell)-\Bar{\bX}_\theta^b(s_{\ell-1})\bigr).
\]

\begin{defn}
The physics-informed loss is used to fit the observed path values along the training partition. For each \(t_i\in\pi[0,T]\), we first compute the truncated signature of the learned extended path \(\bbX_\theta\) over \([0,t_i]\), and then pass it through a predictor \(g_\phi\). The loss is defined by
\[
\mathcal{L}_{\mathrm{phy}}(\theta,\phi)
:=
\frac{1}{N}
\sum_{t_i\in\pi[0,T]}
\left\|
\bY_{t_i}
-
g_\phi\!\left(\sig^{k}(\bbX_\theta)_{0t_i}\right)
\right\|^2 .
\]
Here \(\pi[0,T]\) is a partition of the time interval, \(N=|\pi[0,T]|\) is the number of training time points, \(\sig^k(\bbX_\theta)_{0t_i}\) denotes the signature of the learned extended path truncated at level \(k\), and \(g_\phi\) is a trainable predictor, for example a linear layer or a small neural network.
\end{defn}
\begin{rem}
This is the most general form of the loss. For the problems of interest in the subsequent sections, we will define the physics--informed loss in different ways, depending on the nature of the problem. However, the shuffle loss will remain the same throughout.
\end{rem}
%%%%%%%%%%%%%%%%%%%%%%%%%%%%%%%%%%

\section{Branched Signature Kernel Solver for ODEs}\label{sec: kernel solvers}

In this section, we develop a branched signature-kernel solver based on a
single observed time series in $\R^{d}$---in the numerical experiments below,
this corresponds to the forcing term $f$ sampled at $N+1$ points on $[0,T]$.
From this single observation we generate $N+1$ training paths by the
count-sampling construction of Definition~\ref{defn:count-sampling}, which
serves as the foundation for the kernel-based solver developed in the
remainder of the section.

\subsection{Linear integral method I}\label{subsec:lin-inte}
First, let us define the linear ODE systems our method applies to. Fix the invertible matrices $A_0, A_1, \ldots, A_m \in \R^{d\times d}$, and define the linear operator $\cL$ acting on $C^{m}([0,T],\R^{d})$ by
\[
\cL \bu(t) \;:=\; \sum_{r=0}^{m} A_r\, \bu^{(r)}(t).
\]
The boundary operator $\cB$ collects the first $m$ derivatives at $t=0$:
\[
\cB \bu \;:=\; \bigl(\bu(0),\,\bu'(0),\,\ldots,\,\bu^{(m-1)}(0)\bigr)
\;=\; \bigl(\bg_0,\bg_1,\ldots,\bg_{m-1}\bigr) \;\in\; \R^{md},
\qquad \bg_p\in\R^{d}.
\]
\begin{defn}[Linear ODE system]\label{defn:lin-ode}
A Linear ODE system is defined as
\[
\cL\bu(t) \;=\; f(t), \quad\text{for}\quad t\in[0,T],\quad \text{and} \quad 
\cB\bu \;=\; \bg,
\]
for $\bu:[0,T]\to\R^{d}$, forcing $f:[0,T]\to\R^{d}$, and initial
data $\bg=(\bg_0,\ldots,\bg_{m-1})\in\R^{md}$.
\end{defn}

The scalar case $d=1$ recovers the single-equation form with constants
$k_r:=A_r\in\R$, and is our running example throughout the section.

Let $\bK^{N}_{\bsig}(\cdot,\cdot)$ be the branched signature kernel truncated at order $N$ defined in the previous section. We now construct an ODE solver based on this kernel (with the geometric signature kernel recovered as a special case) from a single trajectory observation of the forcing $f$. To our knowledge, this single-trajectory construction is new in the literature.

Let $0=t_0<t_1<\cdots<t_N=T$ be the observation times. By count-sampling
(Definition~\ref{defn:count-sampling}), the single observation of $f$
generates the $N+1$ training paths
\[
\bff_i \;:=\; \bigl(f(t_0),\,f(t_1),\,\ldots,\,f(t_i)\bigr)\;\in\;(\R^{d})^{i+1},
\qquad i=0,1,\ldots,N.
\]
For any query time $t\in[0,T]$ we write
$\bff(t):=\bigl(f(t_0),\ldots,f(t_{j(t)})\bigr)$ with
$j(t):=\max\{j:t_j\le t\}$, so that $\bff(t_j)=\bff_j$.

We place the component-wise kernel ansatz on the highest-order derivative $\bu^{(m)}$:
for each coordinate $q=1,\ldots,d$,
\begin{equation}\label{eq:ansatz-component}
u_q^{(m)}(t) \;=\; \sum_{i=0}^{N}\alpha_i^{(q)}\,
\bK^{N}_{\bsig}\!\bigl(\bff_i,\bff(t)\bigr),
\qquad q=1,\ldots,d.
\end{equation}
Stacking the coordinate coefficients into
$\bm\alpha_i:=\bigl(\alpha_i^{(1)},\ldots,\alpha_i^{(d)}\bigr)^{\top}\in\R^{d}$,
the ansatz reads in vector form
\begin{equation}\label{eq:ansatz-vector}
\bu^{(m)}(t) \;=\; \sum_{i=0}^{N}\bm\alpha_i\,
\bK^{N}_{\bsig}\!\bigl(\bff_i,\bff(t)\bigr).
\end{equation}

Successive integration of \eqref{eq:ansatz-vector} together with the
boundary data $\cB\bu=\bg$ gives, for $k=1,\ldots,m$,
\begin{equation}\label{eq:integration}
\bu^{(m-k)}(t)\;=\;\sum_{i=0}^{N}\bm\alpha_i\,
\bigl(\cI^{k}\bK^{N}_{\bsig}\bigr)\!\bigl(\bff_i,\bff(t)\bigr)
\;+\;\bm{p}_{k-1}(t),
\end{equation}
where $\cI^{k}$ is $k$-fold integration of the kernel in its second
argument,
\[
\bigl(\cI^{k}\bK^{N}_{\bsig}\bigr)\!\bigl(\bff_i,\bff(t)\bigr)
\;:=\;\int_{0}^{t}\!\!\int_{0}^{s_1}\!\!\cdots\!\int_{0}^{s_{k-1}}
\bK^{N}_{\bsig}\!\bigl(\bff_i,\bff(s_k)\bigr)\,
\d s_k\cdots\d s_1,
\]
and $\bm{p}_{k-1}(t)\in\R^{d}$ is the degree-$(k-1)$ Taylor polynomial
fixed by the prescribed initial conditions,
\[
\bm{p}_{k-1}(t)\;=\;\sum_{l=0}^{k-1}\frac{t^{l}}{l!}\,\bg_{\,m-k+l}.
\]
With the convention $\cI^{0}\bK^{N}_{\bsig}=\bK^{N}_{\bsig}$ and
$\bm{p}_{-1}\equiv 0$, formula \eqref{eq:integration} is also valid for
$k=0$ and recovers \eqref{eq:ansatz-vector}. In particular, $k=m$ yields
$\bu(t)$ itself.

Applying $\cL$ to the ansatz via \eqref{eq:integration} yields the collocation block system
\[
\cL\bu(t)\;=\;\sum_{i=0}^{N}\Bigl[\sum_{r=0}^{m}A_r\,
\bigl(\cI^{m-r}\bK^{N}_{\bsig}\bigr)\!\bigl(\bff_i,\bff(t)\bigr)\Bigr]\bm\alpha_i
\;+\;\sum_{r=0}^{m-1}A_r\,\bm{p}_{m-r-1}(t),
\]
and collocating $\cL\bu(t_j)=f(t_j)$ at every count-sampled path
$j=0,\ldots,N$ yields the block linear system
\begin{equation}\label{eq:collocation}
\sum_{i=0}^{N}L_{ji}\,\bm\alpha_i \;=\; \tilde f(t_j),
\qquad j=0,\ldots,N,
\end{equation}
with $d\times d$ blocks
\begin{equation}\label{eq:block-kernel}
L_{ji}\;:=\;\sum_{r=0}^{m}A_r\,
\bigl(\cI^{\,m-r}\bK^{N}_{\bsig}\bigr)\!\bigl(\bff_i,\bff_j\bigr)\;\in\;\R^{d\times d},
\end{equation}
and reduced forcing
$\tilde f(t_j):=f(t_j)-\sum_{r=0}^{m-1}A_r\,\bm{p}_{m-r-1}(t_j)\in\R^{d}$.
Equivalently, in matrix form,
\[
L\,\bm\alpha \;=\; \tilde F,
\qquad
L\in\R^{(N+1)d\times(N+1)d},\quad
\bm\alpha=\bigl(\bm\alpha_0^{\top},\ldots,\bm\alpha_N^{\top}\bigr)^{\top},\quad
\tilde F=\bigl(\tilde f(t_0)^{\top},\ldots,\tilde f(t_N)^{\top}\bigr)^{\top}.
\]

For our boundary constraints, evaluating the integrated ansatz~\eqref{eq:integration} at $t=0$ and using
$\bm{p}_{k-1}(0)=\bg_{m-k}$ gives, for $p=0,\ldots,m-1$,
\[
\bu^{(p)}(0)\;=\;\sum_{i=0}^{N}\bm\alpha_i\,
\bigl(\cI^{\,m-p}\bK^{N}_{\bsig}\bigr)\!\bigl(\bff_i,\bff_0\bigr)\Big|_{t=0}
\;+\;\bg_p\;=\;\bg_p,
\]
so that the iterated integrals vanish identically at $t=0$ and the initial
conditions are satisfied by construction through $\bm{p}_{k-1}$. The free
boundary constraints that remain are therefore the natural matching
relations at the degenerate first path $\bff_0=(f(t_0))$, encoded as the
block system
\begin{equation}\label{eq:boundary}
B\,\bm\alpha \;=\; \bg,
\qquad
B_{p,i}\;:=\;\bigl(\cI^{\,m-p}\bK^{N}_{\bsig}\bigr)\!\bigl(\bff_i,\bff_0\bigr)\,
\big|_{t=t_0}\in\R^{d\times d},
\quad p=0,\ldots,m-1,
\end{equation}
where the evaluation $|_{t=t_0}$ enforces compatibility at the boundary
node per Definition~\ref{defn:count-sampling}.

The combined collocation and boundary system form the following augmented least-squares system
\[
\begin{pmatrix} L \\ B \end{pmatrix}\bm\alpha
\;=\;
\begin{pmatrix} \tilde F \\ \bg \end{pmatrix}
\]
has $(N+1)d$ unknowns and $(N+1+m)d$ equations. We close it by the
weighted least-squares problem
\begin{equation}\label{eq:lsq}
\bm\alpha
\;=\;\arg\min_{\bm\alpha\in\R^{(N+1)d}}\,
\bigl\lVert L\,\bm\alpha-\tilde F\bigr\rVert^{2}
\;+\;\lambda_{B}\,\bigl\lVert B\,\bm\alpha-\bg\bigr\rVert^{2},
\end{equation}
with tunable boundary weight $\lambda_{B}>0$.

Once $\bm\alpha$ has been recovered from \eqref{eq:lsq}, the solution evaluated at a new query time $\tau\in[0,T]$ is obtained by integrating the ansatz $m$ times,
\begin{equation}\label{eq:prediction}
\bu(\tau)\;=\;\sum_{i=0}^{N}\bm\alpha_i\,
\bigl(\cI^{m}\bK^{N}_{\bsig}\bigr)\!\bigl(\bff_i,\bff(\tau)\bigr)
\;+\;\bm{p}_{m-1}(\tau),
\end{equation}
where the same count-sampled training paths $\bff_0,\ldots,\bff_N$ act as
the kernel anchors.

\begin{example}\label{ex:scalar-integral}
Consider the scalar second-order ODE
\begin{align*}
&k_2\,u''(t) + k_1\,u'(t) + k_0\,u(t) \;=\; f(t),\qquad t\in[0,T],\\
&u(0) \;=\; a,\qquad u'(0) \;=\; b,
\end{align*}
with constants $k_0,k_1,k_2\in\R$ ($k_2\neq 0$), corresponding to the
$d=1$, $m=2$ instance of Definition~\ref{defn:lin-ode} with
$A_r=k_r$ and initial data $g_0=a$, $g_1=b$.
\end{example}

We use Example \ref{ex:scalar-integral} to demonstrate our general solver construction. Following the integral method of Section~\ref{subsec:lin-inte}, we
place the kernel ansatz on the highest-order derivative $u''$ and
recover the lower derivatives and the solution itself by integrating the
kernel (not the forcing $f$):
\begin{equation}\label{eq:ex-ansatz-highest}
u''(t) \;=\; \sum_{i=0}^{N}\alpha_i\,\bK^{N}_{\bsig}\!\bigl(\bff_i,\bff(t)\bigr),
\end{equation}
where $\bff_0,\ldots,\bff_N$ are the count-sampled training paths
$\bff_i=(f(t_0),\ldots,f(t_i))$ on the grid $0=t_0<t_1<\cdots<t_N=T$.

Following the method derived in Section \ref{subsec:lin-inte}, and the initial condition 
$u'(0)=b$, $u(0)=a$, we get 
\begin{align*}
u'(t) \;=\; b + \int_{0}^{t} u''(s)\,\dd s
\;=\; b + \sum_{i=0}^{N}\alpha_i\,\bigl(\cI\bK^{N}_{\bsig}\bigr)\!\bigl(\bff_i,\bff(t)\bigr);\\
u(t) \;=\; a + b\,t + \int_{0}^{t}\!\!\int_{0}^{s} u''(r)\,\dd r\,\dd s
\;=\; a + b\,t + \sum_{i=0}^{N}\alpha_i\,\bigl(\cI^{2}\bK^{N}_{\bsig}\bigr)\!\bigl(\bff_i,\bff(t)\bigr).
\end{align*}
which then constructs the equation for equation~\eqref{eq:integration}
with boundary polynomials $\bm{p}_0(t)=b$ and $\bm{p}_1(t)=a+b\,t$. 

Collocating the ODE at $t=t_j$ for $j=0,\ldots,N$ and substituting the
three expressions above gives the matrix linear system
\[
\sum_{i=0}^{N}\alpha_i\Bigl[
k_2\bK^{N}_{\bsig}\!\bigl(\bff_i,\bff_j\bigr)
+k_1\bigl(\cI\bK^{N}_{\bsig}\bigr)\!\bigl(\bff_i,\bff_j\bigr)
+k_0\bigl(\cI^{2}\bK^{N}_{\bsig}\bigr)\!\bigl(\bff_i,\bff_j\bigr)
\Bigr]\!=\!f(t_j) - k_1b - k_0\bigl(a + bt_j\bigr).
\]
Introduce the three $(N+1)\times(N+1)$ Gram matrices
\[
K_{ji} \;:=\; \bK^{N}_{\bsig}\!\bigl(\bff_i,\bff_j\bigr),\qquad
K^{(1)}_{ji} \;:=\; \bigl(\cI\bK^{N}_{\bsig}\bigr)\!\bigl(\bff_i,\bff_j\bigr),\qquad
K^{(2)}_{ji} \;:=\; \bigl(\cI^{2}\bK^{N}_{\bsig}\bigr)\!\bigl(\bff_i,\bff_j\bigr),
\]
so that the ansatz values at the collocation nodes are
\[
\bu''(\bm t) \;=\; K\,\bm\alpha,\qquad
\bu'(\bm t) \;=\; b\,\mathbf{1} + K^{(1)}\bm\alpha,\qquad
\bu(\bm t) \;=\; a\,\mathbf{1} + b\,\bm t + K^{(2)}\bm\alpha,
\]
with $\bm t=(t_0,\ldots,t_N)^{\top}$,
$\bm\alpha=(\alpha_0,\ldots,\alpha_N)^{\top}$, and $\mathbf{1}\in\R^{N+1}$ the
all-ones vector. Substituting into the ODE evaluated at $\bm t$,
\[
k_2\,K\bm\alpha + k_1\bigl(b\,\mathbf{1}+K^{(1)}\bm\alpha\bigr) + k_0\bigl(a\,\mathbf{1}+b\,\bm t+K^{(2)}\bm\alpha\bigr) \;=\; \bm f,
\qquad \bm f := \bigl(f(t_0),\ldots,f(t_N)\bigr)^{\top},
\]
collecting terms yields the block linear system
\begin{equation}\label{eq:ex-linsys}
L\,\bm\alpha \;=\; \tilde{F},
\qquad
L \;:=\; k_2\,K + k_1\,K^{(1)} + k_0\,K^{(2)}\;\in\;\R^{(N+1)\times(N+1)},
\end{equation}
with reduced forcing
\[
\tilde{F} \;:=\; \bm f \;-\; \bigl(k_0\,a + k_1\,b\bigr)\,\mathbf{1} \;-\; k_0\,b\,\bm t \;\in\;\R^{N+1}.
\]
Equation~\eqref{eq:ex-linsys} is the scalar $m=2$ specialization of the
block kernel system~\eqref{eq:block-kernel}, with no integration applied
to $f$. We demonstrate this example in Section \ref{sec:numerical-experiments} with a real-world application.

\subsection{Linear integral method II}\label{subsec:lin-inte-2}
In Section~\ref{subsec:lin-inte}, we place the kernel ansatz on the highest-order derivative $\bu^{(m)}$ and integrate $m$ times to recover $\bu$. An equivalent formulation is to integrate \emph{both sides} of the ODE $m$ times first, turning the differential equation into an integral equation in $\bu$ itself, and then place the kernel ansatz directly on $\bu$. This method is easier to implement, but less robust than the first, as seen in the examples section. We begin with a second-order scalar ODE to demonstrate the main idea. To fix notation, consider a damped oscillator with prescribed parameters $(m,c,k)$ :
\[
m\,u''(t) + c\,u'(t) + k\,u(t) = f(t),\qquad
u(t_0)=u_0,\quad u'(t_0)=v_0.
\]
Here $f(t)$ denotes the external forcing signal.
Integrating both sides twice from $t_0$ to $t$ and applying the fundamental
theorem of calculus together with the initial conditions yields the
Volterra integral equation
\begin{equation}\label{eq:scalar-volterra}
m\,u(t) \;+\; c\!\int_{t_0}^{t}\! u(s)\,\dd s
\;+\; k\!\int_{t_0}^{t}\!\!\int_{t_0}^{s}\! u(r)\,\dd r\,\dd s
\;=\; \int_{t_0}^{t}\!\!\int_{t_0}^{s}\! f(r)\,\dd r\,\dd s
\;+\; m u_0 \;+\; (m v_0 + c u_0)(t-t_0).
\end{equation}
The differential equation has been transformed into an integral equation
in $u$ alone, whose right-hand side depends only on the iterated integral
of $f$ and on the initial data. No derivatives of $u$ remain on the
left.

The method may also be extended to a general order-$m$, $d$-dimensional system using the following. Apply the $m$-fold integration operator
\[
\bigl(\cI_{t_0}^{m}\phi\bigr)(t)\;:=\;
\int_{t_0}^{t}\!\!\int_{t_0}^{s_1}\!\!\cdots\!\int_{t_0}^{s_{m-1}}\!\!
\phi(s_m)\,\dd s_m\cdots\dd s_1
\]
to both sides of $\cL\bu(t)=f(t)$. Repeated use of the fundamental theorem
of calculus, together with the initial conditions
$\bu^{(l)}(t_0)=\bg_l$ for $l=0,\ldots,m-1$, gives the identity
\begin{equation}\label{eq:taylor-remainder}
\bigl(\cI_{t_0}^{m}\bu^{(r)}\bigr)(t)
\;=\;\bigl(\cI_{t_0}^{\,m-r}\bu\bigr)(t)
\;-\;\sum_{l=0}^{r-1}\frac{(t-t_0)^{m-r+l}}{(m-r+l)!}\,\bg_l,
\qquad r=0,1,\ldots,m,
\end{equation}
which for $r=m$ reduces to Taylor's theorem with integral remainder and
for $r=0$ is the trivial identity $\cI^{m}\bu=\cI^{m}\bu$. Summing
\eqref{eq:taylor-remainder} against $A_r$ and rearranging,
\begin{equation}\label{eq:volterra}
\sum_{r=0}^{m} A_r\,\bigl(\cI_{t_0}^{\,m-r}\bu\bigr)(t)
\;=\;\bigl(\cI_{t_0}^{m}f\bigr)(t)\;+\;\bm{q}(t),
\end{equation}
where the boundary polynomial $\bm{q}(t)\in\R^{d}$ collects all
initial-conditions,
\begin{equation}\label{eq:q-poly}
\bm{q}(t)\;:=\;\sum_{r=1}^{m} A_r\sum_{l=0}^{r-1}
\frac{(t-t_0)^{m-r+l}}{(m-r+l)!}\,\bg_l.
\end{equation}
Equation~\eqref{eq:volterra} is the multi-dimensional, order-$m$ Volterra
form of Definition~\ref{defn:lin-ode}: all initial conditions
$\bg_0,\ldots,\bg_{m-1}$ are absorbed into $\bm{q}$, and only iterated
integrals of $f$ and $\bu$ appear.
\begin{rem} For $m=2$, $d=1$, with
$A_0=k$, $A_1=c$, $A_2=m$, formula~\eqref{eq:q-poly} gives
$\bm{q}(t)=m u_0 + (m v_0 + c u_0)(t-t_0)$, recovering
\eqref{eq:scalar-volterra}.
\end{rem}

Because~\eqref{eq:volterra} contains no derivatives of $\bu$, we place
the kernel ansatz directly on the solution $\bu$: for each coordinate
$q=1,\ldots,d$,
\begin{equation}\label{eq:ansatz-component-II}
u_q(t) \;=\; \sum_{i=0}^{N}\alpha_i^{(q)}\,
\bK^{N}_{\bsig}\!\bigl(\bff_i,\bff(t)\bigr),
\end{equation}
or, in vector form with
$\bm\alpha_i:=\bigl(\alpha_i^{(1)},\ldots,\alpha_i^{(d)}\bigr)^{\top}\in\R^{d}$,
\begin{equation}\label{eq:ansatz-vector-II}
\bu(t) \;=\; \sum_{i=0}^{N}\bm\alpha_i\,
\bK^{N}_{\bsig}\!\bigl(\bff_i,\bff(t)\bigr).
\end{equation}
The count-sampled training paths $\bff_0,\ldots,\bff_N$ defined in
Section~\ref{subsec:lin-inte} are reused as the kernel anchors. Substituting~\eqref{eq:ansatz-vector-II} into~\eqref{eq:volterra} and
collocating at $t=t_j$ for $j=0,\ldots,N$ gives
\begin{equation}\label{eq:collocation-II}
\sum_{i=0}^{N} L_{ji}\,\bm\alpha_i \;=\; \tilde{f}_{\mathrm{II}}(t_j),
\qquad j=0,\ldots,N,
\end{equation}
with the \emph{same} $d\times d$ block kernel matrix as in the original method~\eqref{eq:block-kernel},
\begin{equation}\label{eq:block-kernel-II}
L_{ji}\;=\;\sum_{r=0}^{m}A_r\,
\bigl(\cI_{t_0}^{\,m-r}\bK^{N}_{\bsig}\bigr)\!\bigl(\bff_i,\bff_j\bigr)
\;\in\;\R^{d\times d},
\end{equation}
and integrated forcing
\[
\tilde{f}_{\mathrm{II}}(t_j)\;:=\;
\bigl(\cI_{t_0}^{m}f\bigr)(t_j)\;+\;\bm{q}(t_j)\;\in\;\R^{d}.
\]
In the Matrix form, we then have
\begin{equation}
L\,\bm\alpha = \tilde{F}_{\mathrm{II}},\quad \text{with}\quad 
\tilde{F}_{\mathrm{II}}=\bigl(\tilde{f}_{\mathrm{II}}(t_0)^{\top},\ldots,
\tilde{f}_{\mathrm{II}}(t_N)^{\top}\bigr)^{\top}\in\R^{(N+1)d}.
\end{equation}

% {\color{red}
% \textbf{Evaluation within $[0,T]$.}
% Once $\bm\alpha$ has been recovered from~\eqref{eq:collocation-II}, the
% solution at a new query time $\tau\in[0,T]$ is read off from the ansatz
% directly:
% \begin{equation}\label{eq:prediction-II}
% \bu(\tau)\;=\;\sum_{i=0}^{N}\bm\alpha_i\,
% \bK^{N}_{\bsig}\!\bigl(\bff_i,\bff(\tau)\bigr),
% \end{equation}
% without any additional integration --- in contrast to the differential
% method~\eqref{eq:prediction}, which requires an $m$-fold kernel integral.}

%\subsection{Nonlinear equations and the integral method}\label{subsec:nonlin-eqs}

\subsection{Nonlinear integral method}\label{subsec:nonlin-inte}

We extend the framework of Section~\ref{subsec:lin-inte} to nonlinear
systems. Let $\cN$ denote a (possibly nonlinear) $m$-th order differential
operator acting on $C^{m}([0,T],\R^{d})$, and recall the boundary
operator $\cB\bu:=(\bu(0),\bu'(0),\ldots,\bu^{(m-1)}(0))$.

\begin{defn}[Nonlinear ODE system]\label{defn:nonlin-ode}
A Nonlinear ODE system is defined as
\[
\cN\bu(t) \;=\; f(t), \quad \text{for}\quad t\in[0,T],\quad \text{and}\quad
\cB\bu \;=\; \bg,
\]
for unknown $\bu:[0,T]\to\R^{d}$, forcing $f:[0,T]\to\R^{d}$, and initial
data $\bg=(\bg_0,\ldots,\bg_{m-1})\in\R^{md}$. A typical splitting is
$\cN\bu=\cL\bu+\cN_{\mathrm{nl}}(\bu,\bu',\ldots,\bu^{(m-1)})$, where
$\cL$ is the linear operator of Definition~\ref{defn:lin-ode} and
$\cN_{\mathrm{nl}}$ collects the nonlinear contributions.
\end{defn}

We illustrate the method with the Duffing equation, which serves as one of our main numerical examples, before presenting the general multidimensional formulation.

\begin{defn}[Duffing equation]\label{defn:duffing}
\begin{align*}
&u''(t) + k_1\,u'(t) + k_0\,u(t) + \gamma\,u(t)^{3} \;=\; f(t),\qquad t\in[0,T],\\
&u(0)=a,\qquad u'(0)=b,
\end{align*}
with linear constants $k_0,k_1\in\R$, nonlinear stiffness $\gamma\in\R$,
and initial data $a,b\in\R$. This is the $d=1$, $m=2$ instance of
Definition~\ref{defn:nonlin-ode} with linear part
$\cL u=u''+k_1 u'+k_0 u$ (so $A_2=1$, $A_1=k_1$, $A_0=k_0$) and
nonlinear part $\cN_{\mathrm{nl}}(u)=\gamma\,u^{3}$.
\end{defn}

Following the integral method of Section~\ref{subsec:lin-inte-2}, place
the kernel ansatz on $u''$ and recover the lower derivatives by
integrating the kernel:
\begin{align}
u''(t) &\;=\; \sum_{i=0}^{N}\alpha_i\,\bK^{N}_{\bsig}\!\bigl(\bff_i,\bff(t)\bigr),
\label{eq:duff-ansatz-u2}\\[2pt]
u'(t)  &\;=\; b \;+\; \sum_{i=0}^{N}\alpha_i\,\bigl(\cI\bK^{N}_{\bsig}\bigr)\!\bigl(\bff_i,\bff(t)\bigr),
\label{eq:duff-ansatz-u1}\\[2pt]
u(t)   &\;=\; a + b\,t \;+\; \sum_{i=0}^{N}\alpha_i\,\bigl(\cI^{2}\bK^{N}_{\bsig}\bigr)\!\bigl(\bff_i,\bff(t)\bigr),
\label{eq:duff-ansatz-u0}
\end{align}
where $\bff_0,\ldots,\bff_N$ are the count-sampled training paths on the
grid $0=t_0<t_1<\cdots<t_N=T$ from Section~\ref{subsec:lin-inte}, and the
boundary polynomials $b$ and $a+b\,t$ are the cases $k=1,2$ of
\eqref{eq:integration}.
Plugging \eqref{eq:duff-ansatz-u2}--\eqref{eq:duff-ansatz-u0} into
Definition~\ref{defn:duffing} and grouping the linear and nonlinear
contributions,
\begin{align}\label{example:non-linear}
\underbrace{\sum_{i=0}^{N}\alpha_i\Bigl[\bK^{N}_{\bsig}\!\bigl(\bff_i,\bff(t)\bigr)
+ k_1\bigl(\cI\bK^{N}_{\bsig}\bigr)\!\bigl(\bff_i,\bff(t)\bigr)
+ k_0\bigl(\cI^{2}\bK^{N}_{\bsig}\bigr)\!\bigl(\bff_i,\bff(t)\bigr)\Bigr]}_{\text{linear part}}
\;\nonumber \\
+\;
\underbrace{\gamma\Bigl[a + b\,t + \sum_{i=0}^{N}\alpha_i\bigl(\cI^{2}\bK^{N}_{\bsig}\bigr)\!\bigl(\bff_i,\bff(t)\bigr)\Bigr]^{3}}_{\text{nonlinear part}}=\; f(t) \;-\; k_1\,b \;-\; k_0\bigl(a+b\,t\bigr).
\end{align}

The boundary-data contributions $k_1 b$ and $k_0(a+b\,t)$ have been moved
to the right-hand side, exactly as in the linear case
(equation~\eqref{eq:ex-linsys}).
Given the above derivation, we then
introduce the three Gram matrices, the time vector, and the unit
vector,
\begin{align*}
K_{ji} &:= \bK^{N}_{\bsig}\!\bigl(\bff_i,\bff_j\bigr),\quad
K^{(1)}_{ji} := \bigl(\cI\bK^{N}_{\bsig}\bigr)\!\bigl(\bff_i,\bff_j\bigr),\quad\\
K^{(2)}_{ji} &:= \bigl(\cI^{2}\bK^{N}_{\bsig}\bigr)\!\bigl(\bff_i,\bff_j\bigr),
\quad
\bm t = (t_0,\ldots,t_N)^{\top},\quad
\mathbf{1}\in\R^{N+1},
\end{align*}

so that, at $t=\bm t$,
\[
u''(\bm t) = K\bm\alpha,\qquad
u'(\bm t)  = b\,\mathbf{1} + K^{(1)}\bm\alpha,\qquad
u(\bm t)   = a\,\mathbf{1} + b\,\bm t + K^{(2)}\bm\alpha.
\]
Substituting into the Duffing equation evaluated at $\bm t$ gives the
nonlinear residual equation
\begin{equation}\label{eq:duff-residual}
L\,\bm\alpha \;+\; \gamma\,\bigl(a\,\mathbf{1} + b\,\bm t + K^{(2)}\bm\alpha\bigr)^{\circ 3}
\;=\; \tilde F,
\end{equation}
with linear block matrix
\[
L \;:=\; K + k_1\,K^{(1)} + k_0\,K^{(2)} \;\in\;\R^{(N+1)\times(N+1)},
\]
reduced forcing
\[
\tilde F \;:=\; \bm f \;-\; \bigl(k_0\,a + k_1\,b\bigr)\,\mathbf{1} \;-\; k_0\,b\,\bm t
\;\in\;\R^{N+1},\qquad
\bm f = \bigl(f(t_0),\ldots,f(t_N)\bigr)^{\top},
\]
and $(\,\cdot\,)^{\circ 3}$ denoting the elementwise  cube of a
vector. Equation~\eqref{eq:duff-residual} reduces to the linear block
system~\eqref{eq:ex-linsys} when $\gamma=0$.

For our nonlinear solver, we define the residual map and the training loss
\begin{align}
&\bm{\cR}(\bm\alpha)
:= L\,\bm\alpha \;+\; \gamma\,\bigl(a\,\mathbf{1} + b\,\bm t + K^{(2)}\bm\alpha\bigr)^{\circ 3} \;-\; \tilde F, \\
&\mathrm{Loss}_{\mathrm{ODE}}(\bm\alpha)
:= \frac{1}{N+1}\,\bigl\lVert \bm{\cR}(\bm\alpha)\bigr\rVert_{2}^{2}.
\end{align}
Because $\bm{\cR}$ is a smooth (cubic) polynomial in $\bm\alpha$, we
recover the optimal weights via a quasi-Newton scheme:
\[
\bm\alpha^{\star} \;=\;\operatorname*{argmin}_{\bm\alpha\in\R^{N+1}}\,\mathrm{Loss}_{\mathrm{ODE}}(\bm\alpha),
\]
solved with L-BFGS. Tikhonov
regularization $+\lambda\,\lVert\bm\alpha\rVert_{2}^{2}$ may be added to
the loss for stability.

Once $\bm\alpha^{\star}$ has been recovered, the solution and its
derivatives can be evaluated at a new query time $\tau\in[0,T]$, read off from the
ansatz~\eqref{eq:duff-ansatz-u2}--\eqref{eq:duff-ansatz-u0} directly,

\begin{equation}\label{eq:nonlinear-prediction}
\begin{gathered}
u''(\tau) = \sum_{i=0}^{N}\alpha_i^{\star}\,\bK^{N}_{\bsig}\!\bigl(\bff_i,\bff(\tau)\bigr),\qquad
u'(\tau)  = b + \sum_{i=0}^{N}\alpha_i^{\star}\,\bigl(\cI\bK^{N}_{\bsig}\bigr)\!\bigl(\bff_i,\bff(\tau)\bigr),
\\[0.5em]
u(\tau)   \;=\; a + b\,\tau \;+\; \sum_{i=0}^{N}\alpha_i^{\star}\,\bigl(\cI^{2}\bK^{N}_{\bsig}\bigr)\!\bigl(\bff_i,\bff(\tau)\bigr).
\end{gathered}
\end{equation}

The method may also be applied in the multi-dimensional case. For a general operator $\cN\bu=\cL\bu+\cN_{\mathrm{nl}}(\bu,\bu',\ldots,\bu^{(m-1)})$ with $\bu:[0,T]\to\R^{d}$, the same recipe yields the multi-dimensional nonlinear residual
\[
\bm{\cR}(\bm\alpha)\;=\;L\,\bm\alpha\;+\;\bm{\Phi}_{\mathrm{nl}}\bigl(K^{(0)}\bm\alpha,K^{(1)}\bm\alpha,\ldots,K^{(m-1)}\bm\alpha;\,\bm{p}_{m-1},\ldots,\bm{p}_{0}\bigr)\;-\;\tilde F,
\]
where $L$ is the linear block-kernel matrix of \eqref{eq:block-kernel},
$\bm{\Phi}_{\mathrm{nl}}$ encodes the nodewise evaluation of
$\cN_{\mathrm{nl}}$ at the integrated-kernel reconstructions of
$\bu,\bu',\ldots,\bu^{(m-1)}$, and the Taylor polynomials $\bm{p}_{k-1}$
account for the initial data. The minimization is again carried out by
L-BFGS in the $(N+1)d$ unknowns $\bm\alpha=(\bm\alpha_0^{\top},\ldots,\bm\alpha_N^{\top})^{\top}$.

%%%%%%%%%%%%%%%%%%%%%%%%%%%%%%%%%%
\section{Test/Train and Retrain Method}\label{sec:test-train}

In this section, we build on the branched signature kernel solvers of Section~\ref{sec: kernel solvers}, detailing training, prediction, and retraining algorithms for both linear and nonlinear systems of ODEs. We also describe the pipeline for the neural network extension models and discuss the training process. 
Within the models, one may choose to either employ the linear signature kernel $\bK^{N}_{\bsig}(\bff_i,\bff_j)$ defined in Definition~\ref{def:linsigkernel}, or the rbf signature kernel $\bK^{N,\mathrm{RBF}}_{\bsig}(\bff_i,\bff_j)$, as defined in Definition~\ref{def:rbfsigkernel}; the latter being prefered for particularly rough or nonlinear problems. 
Our Signature kernel construction is based on the count method, where the count sampling paths have different length, thus the RBF kernel is a type of normalization in our framework.

\subsection{Linear training and prediction}

We first describe the linear ODE system prediction and retraining methods for a single forcing trajectory $f(t)$, $t\ge 0$.
% \subsection{Initial training}\label{subsec:tt-init}
Let $\bff_0,\ldots,\bff_n$ be the count-sampled training paths from the first $n+1$ observations, $\bff_i=(f(t_0),\ldots,f(t_i))$, and let $\bm t^{(n)}=(t_0,\ldots,t_n)^{\top}$, $\bm f^{(n)}=\bigl(f(t_0)^{\top},\ldots,f(t_n)^{\top}\bigr)^{\top}\in\R^{(n+1)d}$. For the initial batch of $n{+}1$ observations we solve
\[
L_n\,\bm\alpha^{(n)} = \tilde F^{(n)},
\]
with $L_n$, $\tilde F^{(n)}$, $K^{(k)}$ and $\bm P_k^{(n)}$ as in
Section~\ref{subsec:lin-inte}, using either a direct solver or
Tikhonov-regularized least squares.

% \subsection{Standard Prediction}\label{subsec:tt-pred}

In the above training process, we fit a single batch of coefficients $\bm\alpha\in\mathbb{R}^{N+1}$ on a fixed observation window $\{t_0,\ldots,t_N\}$. The standard prediction method for the linear cases uses ~\eqref{eq:prediction} to predict on new points, and periodically retrains using the training method above.

The standard training and prediction method is extremely fast, but may suffer from quickly degrading performance if parameters are not tuned correctly or the retraining interval is not frequent enough. One may instead opt to use a rolling retraining algorithm, trading implementation complexity/speed for better retraining frequency and less hyperparameter tuning. 

\subsubsection{Rolling Prediction}\label{subsec:tt-online}

In rolling-retrain prediction we update the previous coefficient vector by adding
one new $d$-dimensional block per incoming observation, using a closed-form update in the linear case, and a scalar Newton step in the nonlinear case.

When the next observation $f(t_{n+1})\in\R^{d}$ arrives, the collocation
system grows to size $(n+2)d\times(n+2)d$. Let $L_{n+1}$ and
$\tilde F^{(n+1)}$ be the augmented block matrix and forcing as in
Section~\ref{subsec:lin-inte}. Instead of refitting all $n{+}2$ blocks, we
freeze $\bm\alpha_0,\ldots,\bm\alpha_n$ and solve only for
$\bm\alpha_{n+1}\in\R^{d}$ from the new block row
\[
\sum_{k=0}^{n+1}\Phi_{n+1,k}\,\bm\alpha_k = \tilde f^{(n+1)}(t_{n+1}),
\]
where
\[
\Phi_{j,i} := \sum_{r=0}^{m}A_r\,\bigl(\cI^{\,m-r}\bK^{N}_{\bsig}\bigr)(\bff_i,\bff_j),
\qquad
\tilde f^{(n+1)}(t_{n+1}) := f(t_{n+1}) - \sum_{r=0}^{m-1}A_r\,\bm{p}_{m-r-1}(t_{n+1}),
\]

Isolating $\bm\alpha_{n+1}$ gives the closed-form update
\begin{equation}\label{eq:tt-update}
\hat{\bm\alpha}_{n+1}
\;=\;
\Phi_{n+1,n+1}^{-1}\Bigl[\,\tilde f^{(n+1)}(t_{n+1}) \;-\; \sum_{k=0}^{n}\Phi_{n+1,k}\,\bm\alpha_k\,\Bigr].
\end{equation}
The diagonal block
$\Phi_{n+1,n+1}=\sum_{r=0}^{m}A_r\,(\cI^{\,m-r}\bK^{N}_{\bsig})(\bff_{n+1},\bff_{n+1})$
is invertible whenever $A_m$ is invertible and the time step is positive
(the iterated integrals $\cI^{\,k}\bK^{N}_{\bsig}$ contribute scalar
weights that decay with $k$, so the leading $A_m$ term dominates for non-degenerate $t_{n+1}>t_n$). The solution at the new node follows from the prediction formula~\eqref{eq:prediction},
\[
\bu(t_{n+1})
= \bm{p}_{m-1}(t_{n+1})
+ \sum_{k=0}^{n}\bm\alpha_k\,\bigl(\cI^{m}\bK^{N}_{\bsig}\bigr)(\bff_k,\bff_{n+1})
+ \hat{\bm\alpha}_{n+1}\,\bigl(\cI^{m}\bK^{N}_{\bsig}\bigr)(\bff_{n+1},\bff_{n+1}).
\]

By induction, \eqref{eq:tt-update} produces $\hat{\bm\alpha}_{n+2},\hat{\bm\alpha}_{n+3},\ldots$
using only previously fitted weights. Each online step costs
$O\bigl(n\,d^{2}\bigr)$ to assemble the new kernel block-row plus
$O(d^{3})$ to invert $\Phi_{n+1,n+1}$, versus $O\bigl((nd)^{3}\bigr)$ for a full refit. The frozen-coefficient update is exact in $\bm\alpha_{n+1}$ given $\bm\alpha_0,\ldots,\bm\alpha_n$, but not equal to the full batch solution on $n{+}2$ samples. As a result, the accumulated approximation error motivates the retraining schedule introduced below.

As prediction accuracy degrades or drift becomes apparent, periodic retrains are required. These retrains refit the entire set $\alpha$ over the new interval using the same method as initial training.

%\subsection{Periodic retraining}\label{subsec:tt-retrain}

\subsection{Non-linear training and prediction}\label{subsec:tt-init-nonlin}

For the nonlinear ODE system we define in Definition~\ref{defn:nonlin-ode}, the batch collocation residual from Section~\ref{subsec:nonlin-inte} with
operator $\cN\bu=\cL\bu+\cN_{\mathrm{nl}}(\bu,\bu',\ldots,\bu^{(m-1)})$ is
\[
\bm{\cR}\bigl(\bm\alpha^{(n)}\bigr)
\;=\; L_n\,\bm\alpha^{(n)} \;+\; \bm{\Phi}_{\mathrm{nl}}\bigl(\bm\alpha^{(n)}\bigr) \;-\; \tilde F^{(n)},
\]

where $\bm{\Phi}_{\mathrm{nl}}(\bm\alpha^{(n)})\in\R^{(n+1)d}$ stacks the
componentwise contributions $\cN_{\mathrm{nl}}\bigl(\bu(t_j),  \bu'(t_j),\\ \ldots,\bu^{(m-1)}(t_j)\bigr)$ evaluated via the integrated-kernel reconstructions for $r=0,\ldots,m-1$: $\bu^{(r)}(t_j)=\bm{p}_{m-r-1}(t_j)+\sum_{k=0}^{n}\bm\alpha_k\,(\cI^{\,m-r}\bK^{N}_{\bsig})(\bff_k,\bff_j)$. Initial training minimizes
$\|\bm{\cR}\|_2^{2}+\lambda\|\bm\alpha\|_2^{2}$ by L-BFGS.

%\subsection{Standard Predict and Retrain (Nonlinear)}\label{subsec:tt-pred-nonlin}
In the nonlinear case, prediction is done using ~\eqref{eq:nonlinear-prediction}. However, when periodically retraining, instead of reapplying the full training from Section~\ref{subsec:nonlin-inte} a speedup is implemented using previous training values for $\alpha$ to provide a warm start.
When new observations $\{f_{n+1},\ldots,f_{n+B}\}$ arrive, we warm-start L-BFGS from the padded vector
\[
\bm\alpha^{(n+B)}_{\mathrm{init}}
=
\bigl(\bm\alpha^{(n)\top},\,\mathbf{0}_{Bd}^\top\bigr)^\top
\in \mathbb{R}^{(n+B+1)d},
\]
and minimise $\mathcal{L}_{n+B}$ to obtain $\bm\alpha^{(n+B)}$,
which is then used in~\eqref{eq:nonlinear-prediction}.

\subsubsection{Rolling prediction (nonlinear)}\label{subsec:tt-rolling-nonlin}

For the nonlinear rolling update, when $f(t_{n+1})$ arrives we freeze
$\bm\alpha_0,\ldots,\bm\alpha_n$ and solve only for the new block
$\bm\alpha_{n+1}\in\R^{d}$. Define the previous-coefficient
contributions at the new node
\[
\bu_{\mathrm{prev}}^{(r)}(t_{n+1})
:= \bm{p}_{m-r-1}(t_{n+1})
+ \sum_{k=0}^{n}\bm\alpha_k\,\bigl(\cI^{\,m-r}\bK^{N}_{\bsig}\bigr)(\bff_k,\bff_{n+1}),
\]
for $r=0,1,\ldots,m$, and the following diagonal self-kernel scalars so that
\[
\bu^{(r)}(t_{n+1})
= \bu_{\mathrm{prev}}^{(r)}(t_{n+1}) + \kappa_r\,\bm\alpha_{n+1}.
\quad
\kappa_r := \bigl(\cI^{\,m-r}\bK^{N}_{\bsig}\bigr)(\bff_{n+1},\bff_{n+1}),
\]

Substituting into $\cN\bu(t_{n+1})=f(t_{n+1})$ and separating the
$\bm\alpha_{n+1}$–terms gives the $d$-dimensional residual
\[
\bm{g}(\bm\alpha_{n+1})
=
\Phi_{n+1,n+1}\,\bm\alpha_{n+1}
+
\bm{c}_{0}
+
\cN_{\mathrm{nl}}\Bigl(
\bu_{\mathrm{prev}}^{(0)}+\kappa_0\bm\alpha_{n+1},\ldots,
\bu_{\mathrm{prev}}^{(m-1)}+\kappa_{m-1}\bm\alpha_{n+1}
\Bigr)
-
f(t_{n+1})
=
\bm 0,
\]
where
\[
\Phi_{n+1,n+1} := \sum_{r=0}^{m}A_r\,\kappa_r,\qquad
\bm{c}_{0} := \sum_{r=0}^{m}A_r\,\bu_{\mathrm{prev}}^{(r)}(t_{n+1})
= \sum_{r=0}^{m}A_r\,\bm{p}_{m-r-1}(t_{n+1})
+ \sum_{k=0}^{n}\Phi_{n+1,k}\,\bm\alpha_k.
\]

Setting $\cN_{\mathrm{nl}}\equiv 0$ recovers the linear closed-form
update~\eqref{eq:tt-update},
$\bm\alpha_{n+1}=\Phi_{n+1,n+1}^{-1}\bigl(f(t_{n+1})-\bm{c}_{0}\bigr)$.
We solve $\bm{g}(\bm\alpha_{n+1})=\bm 0$ by Newton's
method in $\R^{d}$,
\[
\bm\alpha_{n+1}^{(s+1)}
=
\bm\alpha_{n+1}^{(s)}
-
\bigl[\nabla_{\bm\alpha_{n+1}}\bm{g}\bigl(\bm\alpha_{n+1}^{(s)}\bigr)\bigr]^{-1}
\bm{g}\bigl(\bm\alpha_{n+1}^{(s)}\bigr),
\]
with Jacobian
\[
\nabla_{\bm\alpha_{n+1}}\bm{g}(\bm\alpha_{n+1})
=
\Phi_{n+1,n+1}
+
\sum_{r=0}^{m-1}\kappa_r\,\partial_{r}\cN_{\mathrm{nl}}\bigl(\bu^{(0)}(t_{n+1}),\ldots,\bu^{(m-1)}(t_{n+1})\bigr),
\]
where $\partial_{r}\cN_{\mathrm{nl}}$ is the Jacobian of
$\cN_{\mathrm{nl}}$ with respect to its $r$-th argument. We warm-start from
the linear-only solution
\[
\bm\alpha_{n+1}^{(0)}
=
\Phi_{n+1,n+1}^{-1}\bigl[f(t_{n+1}) - \bm{c}_{0}\bigr].
\]
Periodic full retraining is applied as specified previously.

% {\color{red}
% Erase, not used
% \smallskip\noindent\textbf{Specialization: Duffing equation.}
% For the scalar $d=1$, $m=2$ Duffing equation of
% Definition~\ref{defn:duffing} ($u''+k_1 u'+k_0 u+\gamma u^3=f$), the
% nonlinear term is $\cN_{\mathrm{nl}}(u)=\gamma u^{3}$, so
% \eqref{eq:tt-newton} reduces to the scalar cubic
% \[
% g(\alpha_{n+1})
% \;=\; \Phi_{n+1,n+1}\,\alpha_{n+1}
% \;+\; \gamma\,\bigl(u_{\mathrm{prev}}(t_{n+1}) + \kappa_0\,\alpha_{n+1}\bigr)^{3}
% \;+\; c_{0} \;-\; f(t_{n+1}) \;=\; 0,
% \]
% where $\Phi_{n+1,n+1}=\kappa_2+k_1\kappa_1+k_0\kappa_0$,
% $c_0=k_1\,u_{\mathrm{prev}}^{(1)}(t_{n+1})+k_0\,u_{\mathrm{prev}}^{(0)}(t_{n+1})+u_{\mathrm{prev}}^{(2)}(t_{n+1})$,
% and the scalar Newton step uses
% $g'(\alpha)=\Phi_{n+1,n+1}+3\gamma\kappa_0\bigl(u_{\mathrm{prev}}(t_{n+1})+\kappa_0\alpha\bigr)^{2}$.}

\subsection{Branched model training via a neural network}
In the case where we apply the neural network lift to learn the branched extension, we must jointly train to fit both the model and extension. Consider an ODE coupled with a very rough signal $f(t)$. We first construct the time-extended path $\mathbf{f} =(t,f(t))$ and feed it through the network $\Phi_\theta$ to produce the path extension $\tilde{\mathbf{f}}_{\theta}$. This extension is then used to compute the shuffle loss
\[
\mathcal L_{\mathrm{shuffle}}(\theta)
:=
\frac{1}{N}
\sum_{t_i\in\pi[0,T]}
\sum_{a,b=1}^{m}
\left|R_i^{a,b}(\theta)\right|^2,
\]
where \[
R_i^{a,b}(\theta) := D_i^a(\theta)D_i^b(\theta) - I_i^{a,b}(\theta) - I_i^{b,a}(\theta),
\]
is the second-order residual. The terms in the residual are defined as
\[
D_i^a(\theta):=\tilde{\mathbf{f}}_\theta^a(t_i)-\tilde{\mathbf{f}}_\theta^a(t_0), \quad  \text{ and } \quad I_i^{a,b}(\theta)
:=
\int_{t_0}^{t_i}
\bigl(\tilde{\mathbf{f}}_\theta^a(s)-\tilde{\mathbf{f}}_\theta^a(t_0)\bigr)\,d\tilde{\mathbf{f}}_\theta^b(s),
\]
for \(a,b\in\{1,\ldots,m\}\), \(t_i\in\pi[0,T]\), and \(N=|\pi[0,T]|\), the number of training time points.

The extended path $\Bar{\mathbf{f}}_{\theta}$ is then formed by the concatenation of the extension onto the original path i.e., \(\Bar{\mathbf{f}}:= (\mathbf{f}, \tilde{\mathbf{f}}_{\theta})\) . Next, the Kernel Gram matrices are constructed using a kernel choice and optional normalization. The linear or nonlinear collocation system is then solved using the least squares method or nonlinear L-BFGS method in order to produce the solution coefficients $\alpha$. This allows us to recover both the solution match and the forcing match from the solution and collocation ansatz. The latter is compared with the true forcing to compute the physics-informed model loss for a general (non--)linear problem ~\eqref{defn:nonlin-ode} with operator $\cN\bu=\cL\bu+\cN_{\mathrm{nl}}(\bu,\bu',\ldots,\bu^{(m-1)})$ . The loss is defined as follows.
\[
\mathcal L_{\mathrm{model}}(\bm\alpha, \theta)
\;:=\; \frac{1}{N+1}\,\bigl\lVert \;L\,\bm\alpha\;+\;\bm{\Phi}_{\mathrm{nl}}\bigl(K^{(0)}\bm\alpha,K^{(1)}\bm\alpha,\ldots,K^{(m-1)}\bm\alpha;\,\bm{p}_{m-1},\ldots,\bm{p}_{0}\bigr)\;-\;\tilde{F}_{\theta} \bigr\rVert_{2}^{2},
\]
where \(\tilde{F}_{\theta}\) is the reduced forcing, $L$ is the linear block-kernel matrix of \eqref{eq:block-kernel}, $\bm{\Phi}_{\mathrm{nl}}$ encodes the node-wise evaluation of $\cN_{\mathrm{nl}}$ at the integrated-kernel reconstructions of $\bu,\bu',\ldots,\bu^{(m-1)}$, and the Taylor polynomials $\bm{p}_{k-1}$ account for the initial data. The dependence of the loss on the parameter \(\theta\) comes from the dependence of extended path on the parameter \(\theta\).

We then define two hyperparameters \(\lambda_{\text{shuffle}}\)  and \(\lambda_{\text{model}}\) to place weights on the shuffle loss and model loss respectively. The final model loss is then defined as
\[
\cL_{\text{total}}(\bm\alpha,\theta) := \lambda_{\text{shuffle}} \mathcal L_{\mathrm{shuffle}}(\theta) + \lambda_{\text{model}}\mathcal L_{\mathrm{model}}(\bm\alpha, \theta).
\]

\begin{figure}[h]
\centering
\resizebox{\columnwidth}{!}{%
\begin{tikzpicture}[
    >=Latex,
    line width=0.85pt,
    font=\scriptsize,
    node distance=6mm and 7mm
]

\tikzset{
  box/.style={
    draw,
    rounded corners=6pt,
    align=center,
    minimum width=1.75cm,
    minimum height=0.78cm,
    fill=blue!10
  },
  output/.style={
    draw,
    rounded corners=6pt,
    align=center,
    minimum width=1.75cm,
    minimum height=0.78cm,
    fill=red!10
  },
  inout/.style={
    draw,
    rounded corners=6pt,
    align=center,
    minimum width=1.95cm,
    minimum height=0.78cm,
    fill=green!15
  },
  loss/.style={
    draw,
    rounded corners=6pt,
    align=center,
    minimum width=2.05cm,
    minimum height=0.80cm,
    fill=yellow!20
  },
  total/.style={
    draw,
    rounded corners=6pt,
    align=center,
    minimum width=2.15cm,
    minimum height=0.84cm,
    fill=yellow!20
  },
  lab/.style={font=\scriptsize}
}

% Main chain of nodes
\node[inout] (obs) {$\mathbf{f}=(t,f)$};
\node[box, right=of obs] (lift) {$\Phi_\theta$};
\node[box, right=of lift] (ext) {$\tilde{\mathbf{f}}_\theta$};
\node[box, below=13mm of ext] (lpath) {$\bar{\mathbf{f}}_\theta=(\mathbf{f},\tilde{\mathbf{f}}_\theta)$};
\node[box, right=of lpath] (count) {$\{\bar{\mathbf{f}}_{\theta,j}\}_{j=0}^N$};
\node[box, right=of count] (gram) {$\bK_\theta$};
\node[box, right=of gram] (alpha) {$\bm\alpha$};
\node[output, above right=7mm and 8mm of alpha] (force) {$\hat{\mathbf{f}}$};
\node[output, below right=7mm and 8mm of alpha] (urec) {$\hat{\mathbf{u}}$};

% Loss nodes
\node[loss, above=10mm of ext] (shuffle) {$\cL_{\mathrm{shuffle}}$};
\node[loss, above=8mm of force] (model) {$\cL_{\mathrm{model}}$};
\node[total, above=10mm of gram] (total) {$\lambda_{\mathrm{shuffle}}\cL_{\mathrm{shuffle}}+\lambda_{\mathrm{model}}\cL_{\mathrm{model}}$};

% Small text labels
\node[lab, above=1mm of obs] {observed};
\node[lab, above=1mm of lift] {lift};
\node[lab, right=1mm of ext] {extension};
\node[lab, below=1mm of lpath] {lifted path};
\node[lab, above=1mm of count] {count samples};
\node[lab, above=1mm of gram] {kernel};
\node[lab, above=1mm of alpha] {least squares};
\node[lab, below=1mm of force] {reconstructed $\mathbf{f}$};
\node[lab, below=1mm of urec] {reconstructed $\mathbf{u}$};

% Arrows
\draw[->] (obs) -- (lift);
\draw[->] (lift) -- (ext);
\draw[->] (ext) -- (shuffle);
\draw[->] (obs.south) |- (lpath.west);
\draw[->] (ext) -- (lpath);
\draw[->] (lpath) -- (count);
\draw[->] (count) -- (gram);
\draw[->] (gram) -- (alpha);
\draw[->] (alpha) -- (force);
\draw[->] (force) -- (model);
\draw[->] (alpha) -- (urec);
\draw[->] (shuffle) -- (total);
\draw[->] (model) -- (total);

\end{tikzpicture}%
}
\caption{Neural network lift for branched training architecture of linear ODEs}
\label{fig:linear_neural_lift_training}
\end{figure}

The parameters \(\lambda_{\text{shuffle}}\)  and \(\lambda_{\text{model}}\) are tuned so that the model learns an extension which fits the data while still following the branched (non-geometric) signature properties. Often, a higher weight is placed on the model loss than the shuffle loss so that a meaningful extension that actually aids in learning the right representative of the solution of the ODE may be selected. Illustrative diagram of the linear case is presented in Figure~\ref{fig:linear_neural_lift_training}

\subsubsection{Nonlinear Optimizations}\label{subsec:nonlinear-optimizations}
In the case of the nonlinear problem, $\alpha$ coefficients from the previous iteration are used as a warm start for the next iteration to improve convergence speed. The nonlinear diagram is shown in Figure~\ref{fig:nonlinear_neural_lift_training}. The computational bottleneck in these models is the \(\alpha\)-optimization step, so we introduce three additional optional hyperparameters to throttle and ramp this solve. The \texttt{solve\_every} parameter freezes the LBFGS updates between calls, reusing the previous \(\alpha\) for a prescribed number of ADAM steps, while the \texttt{min\_iterations} and \texttt{ramp\_portion} parameters set a low initial iteration budget and then increase it linearly up to the maximum at a chosen fraction of training. Tuning these parameters helps vastly speed up the initial iterations so that the neural network can learn expressivity, saving most lfbgs iterations for once the network is expressive. By this point, the warm starts also speed up the final iterations. For highly nonlinear problems, gaps in the frequency of LBFGS solves can cause rapidly spiking model losses. In these cases, one may opt to simply retrain for $\alpha$ every iteration and solely employ the ramping-up strategies.

\subsubsection{A note on Training Landscape}
Our joint optimization problem trains a neural network to both learning an extension while fitting the constraints of the differential equation. This is in essence the workflow followed by Physics Informed Neural Networks (PINNs). PINNs are well documented for difficulties within the training process. For example, gradient flows of the different loss components are often imbalanced, leading to optimization imbalances \cite{PINNproblems1, PINNproblems2}. This can be improved through learning rate annealing, network architectures, or gradient normalizations. In our case, specific attention is given to the weighting of shuffle and model loss so that meaningful, nontrivial extensions are learned while sufficient importance is placed on actual model fit. Learning rate schedulers are used when necessary to help re-weight the losses and ensure one does not take over the other. Stiffness and nonlinearity of problems is another essential factor in convergence difficulties \cite{PINNproblem3,PINNproblem4}. In our cases, we tackle both stiffness and nonlinearity through the use of the RBF signature kernel and tuning of the parameter $\sigma$. Tuning this parameter gives better fits for highly stiff, oscillatory problems and greatly increases convergence of the L-BFGS optimizer by improving conditioning. However, overtuning this parameter comes at the risk of overfitting the training, leading to poor performance in out of sample predictions. As of now, both the RBF hyperparameters and loss weightings are tuned to address the issues. In future work, one may consider methods introduced in the papers above to stabilize convergence for a wider range of problems.

\begin{figure}[h!]
\centering
\resizebox{\columnwidth}{!}{%
\begin{tikzpicture}[
    >=Latex,
    line width=0.85pt,
    font=\scriptsize,
    node distance=6mm and 7mm
]

\tikzset{
  box/.style={
    draw,
    rounded corners=6pt,
    align=center,
    minimum width=1.75cm,
    minimum height=0.78cm,
    fill=blue!10
  },
  output/.style={
    draw,
    rounded corners=6pt,
    align=center,
    minimum width=1.75cm,
    minimum height=0.78cm,
    fill=red!10
  },
  inout/.style={
    draw,
    rounded corners=6pt,
    align=center,
    minimum width=1.95cm,
    minimum height=0.78cm,
    fill=green!15
  },
  loss/.style={
    draw,
    rounded corners=6pt,
    align=center,
    minimum width=2.05cm,
    minimum height=0.80cm,
    fill=yellow!20
  },
  total/.style={
    draw,
    rounded corners=6pt,
    align=center,
    minimum width=2.15cm,
    minimum height=0.84cm,
    fill=yellow!20
  },
  lab/.style={font=\scriptsize}
}

% Main chain of nodes
\node[inout] (obs) {$\mathbf{f}=(t,f)$};
\node[box, right=of obs] (lift) {$\Phi_\theta$};
\node[box, right=of lift] (ext) {$\tilde{\mathbf{f}}_\theta$};
\node[box, below=13mm of ext] (lpath) {$\bar{\mathbf{f}}_\theta=(\mathbf{f},\tilde{\mathbf{f}}_\theta)$};
\node[box, right=of lpath] (count) {$\{\bar{\mathbf{f}}_{\theta,j}\}_{j=0}^N$};
\node[box, right=of count] (gram) {$\bK_\theta$};
\node[box, right=of gram] (alpha) {$\bm\alpha$};
\node[output, above right=7mm and 8mm of alpha] (force) {$\hat{\mathbf{f}}$};
\node[output, below right=7mm and 8mm of alpha] (urec) {$\hat{\mathbf{u}}$};

% Loss nodes
\node[loss, above=10mm of ext] (shuffle) {$\cL_{\mathrm{shuffle}}$};
\node[loss, above=8mm of force] (model) {$\cL_{\mathrm{model}}$};
\node[total, above=10mm of gram] (total) {$\lambda_{\mathrm{shuffle}}\cL_{\mathrm{shuffle}}+\lambda_{\mathrm{model}}\cL_{\mathrm{model}}$};

% Small text labels
\node[lab, above=1mm of obs] {observed};
\node[lab, above=1mm of lift] {lift};
\node[lab, right=1mm of ext] {extension};
\node[lab, below=1mm of lpath] {lifted path};
\node[lab, above=1mm of count] {count samples};
\node[lab, above=1mm of gram] {kernel};
\node[lab, above=1mm of alpha] {L-BFGS};
\node[lab, below=1mm of force] {reconstructed \(\mathbf{f}\)};
\node[lab, below=1mm of urec] {reconstructed \(\mathbf{u}\)};

% Arrows
\draw[->] (obs) -- (lift);
\draw[->] (lift) -- (ext);
\draw[->] (ext) -- (shuffle);
\draw[->] (obs.south) |- (lpath.west);
\draw[->] (ext) -- (lpath);
\draw[->] (lpath) -- (count);
\draw[->] (count) -- (gram);
\draw[->] (gram) -- (alpha);
\draw[->] (alpha) -- (force);
\draw[->] (force) -- (model);
\draw[->] (alpha) -- (urec);
\draw[->] (shuffle) -- (total);
\draw[->] (model) -- (total);

% Self-loop on alpha
\draw[->] (alpha.east) .. controls +(18mm,10mm) and +(18mm,-10mm) .. (alpha.east);

\end{tikzpicture}%
}
\caption{Neural network lift for branched training architecture of nonlinear ODEs}
\label{fig:nonlinear_neural_lift_training}
\end{figure}

%%%%%%%%%%%%%%%%%%%%%%%%%%%%%%%%%%

\section{Numerical Experiments}\label{sec:numerical-experiments}
We test the branched signature kernel solver developed in
Section~\ref{sec: kernel solvers} and Section~\ref{sec:test-train} on a range of examples. In particular, we focus on the setting where only a single trajectory of the forcing term is observed, motivated by real-world scenarios in which the forcing signal is
typically noisy. We compare the performance of the geometric signature kernel solver and the branched signature kernel solver when the forcing term
is highly rough, such as a sample path of fractional Brownian motion. Branched extensions are constructed in the form of the \(t\)-value lift and neural network extension introduced in Definition~\ref{def: tlift} and Definition~\ref{def: NN Lift}. These extension methods are compared within experiments, highlighting problems where the T-lift is adequate and demonstrating that in more noisy or complex cases, the neural network extension is needed.  Furthermore, both methods from Section~\ref{subsec:lin-inte} and Section~\ref{subsec:lin-inte-2} are compared.

Across the experiments, we test and consider variations of the truncation level $N$ of the signature, kernel matrix normalization (none or robust) and kernel choices (linear or RBF). We note that within each example, hyper parameters such as regression regularization, L-BFGS tolerance/iteration count, and the RBF sigma value must be tuned correctly. For each section, time lifts are defined by adding a path component of $t^\alpha$, where $\alpha = 2H$ for the holder value H of the underlying generated forcing. The branched neural networks parameters are tuned for each problem for optimal convergence. Each model uses varying hidden depths/layers for an MLP with a tanh activation function, using Xavier initialization for weights. Integrations of the Kernel Gram Matrices are done using the cumulative trapezoidal integration method unless otherwise stated. Ground truth solutions are generated by either high order Runge-Kutta integration or implicit multi-step solvers implemented within the \texttt{scipy} library.

\subsection{Earthquake displacement model: El-Centro calibration}\label{subsec:eq-model}

We apply the method from Section~\ref{subsec:lin-inte-2} to the 1940 El-Centro strong-motion record sourced from \cite{elcentroearthquakedata}. Structural response is modeled by the damped oscillator, a special case of Example~\ref{ex:scalar-integral}:
\[
\ddot u(t) + 2\xi\omega_n\,\dot u(t) + \omega_n^{2}u(t)
        = -9.81\,a(t),
\qquad
u(0)=\dot u(0)=0,
\]
where \(a(t)\) is the normalized ground-acceleration record, \(\xi\) is the damping ratio, and \(\omega_n\) is the natural frequency. In the experiment, we take \(\xi=0.02\), \(T_n=5\,\mathrm{s}\), and \(\omega_n=2\pi/T_n\). First calibration is done using a signature depth of 12, a linear kernel, and the robust normalization procedure. Figure~\ref{fig:el_centro_example}, displays fits for both the double integrated forcing and solution approximations. The relative MSE for the double integrated forcing was 2.28e-4 while the relative MSE for the solution was 1.47e-04, demonstrating the baseline algorithms effectiveness for moderately rough signals when a high signature depth is used.

\begin{figure}[H]
  \centering
  \captionsetup{font=small,skip=4pt}
\includegraphics[width=1\linewidth]{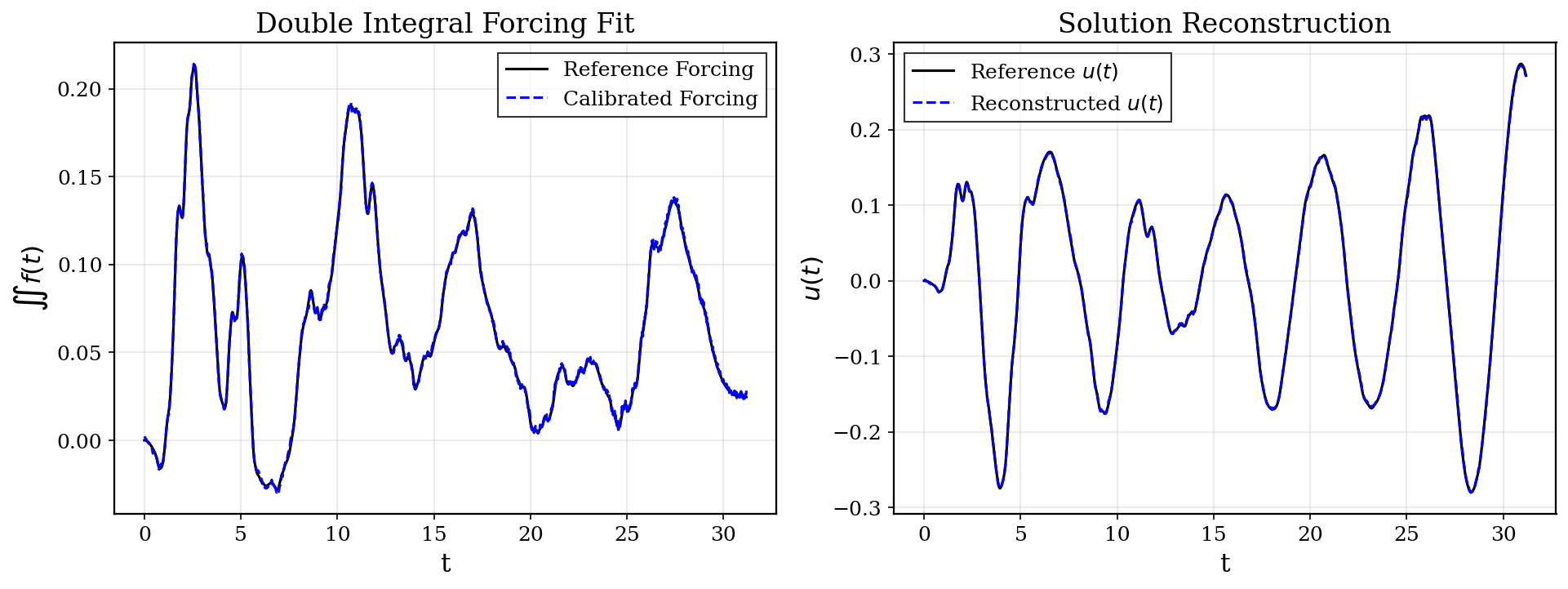}
  \caption{El-Centro earthquake calibration. Left: forcing calibration-learned kernel representation against $-9.81\,a(t)$. Right: displacement-predicted $\hat{u}(t)$ vs.\ reference $u(t)$.}
  \label{fig:el_centro_example}
\end{figure}
\vspace{-0.75em}

Next, training and testing were applied on a 40/60 split using the rolling-retrain protocol of Section~\ref{sec:test-train}, with periodic retraining after 5 predictions. The baseline model is compared to a t-lift model with $h=.25$ The signature depth is lowered to 5 in order to show the effect of the branched model. The results are summarized in Table~\ref{tab:6.1results}.

\begin{table}[ht]
\centering
\begin{tabular}{lcccc}
\toprule
& \multicolumn{2}{c}{\textbf{Training}} & \multicolumn{2}{c}{\textbf{Testing}} \\
\cmidrule(lr){2-3} \cmidrule(lr){4-5}
& Integrated Forcing & Solution & Integrated Forcing & Solution \\
\midrule
Default Kernel        & 4.29e-03 & 4.05e-03 & 1.84e-16 & 1.21e+00 \\
\(t\)-lift branched   & \textbf{5.68e-04} & \textbf{4.63e-04} & \textbf{2.53e-21} & \textbf{1.19e-01} \\
\bottomrule
\end{tabular}
\caption{Match results for the Default Kernel model and the \(t\)-lift branched model.}
\label{tab:6.1results}
\end{table}

The baseline model fails to accurately predict given the rough signals, while the t-lift greatly improves performance. The t-lift still fails to handle all of the roughness, however, suggesting another extension, such as the neural network may be needed. We note the reason for the testing forcing being fit better than the training forcing is that the training forcing calibrates directly to all of the training data at once, whereas in the rolling retrain, we get new $\alpha$ values one by one and match them exactly to the new forcing values. We refer to Figure~\ref{fig:el_centro_prediction} for plots of both models.

\begin{figure}[H]
  \centering
  \captionsetup{font=small,skip=4pt}
  \includegraphics[width=1\linewidth]{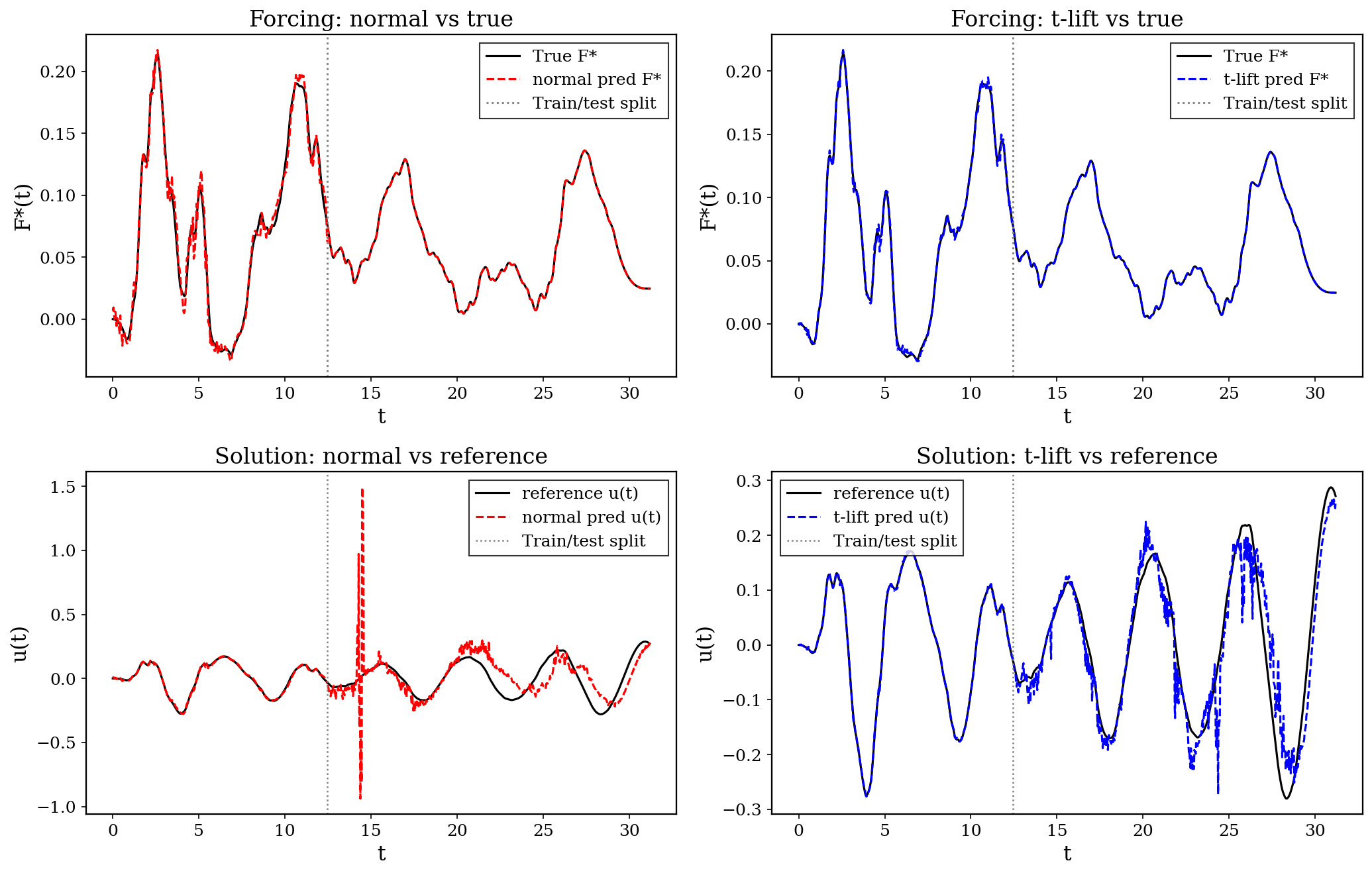}
  \caption{El-Centro streaming prediction. Top row: forcing calibration; bottom row: solution reconstruction. Left: non-branched path $(t,\,-9.81\,a(t))$. Right: branched path $(t,\,t^{\alpha},\,-9.81\,a(t))$.}
  \label{fig:el_centro_prediction}
\end{figure}
%\vspace{-0.75em}

\subsection{Solow capital-stock model}\label{subsec:solow} 

For this example, we use a modified version of the Solow capital-stock growth model \cite{SolowModel}, where we use real GDP from FRED \cite{fred_gdp} as the forcing term.
The Solow capital-stock dynamics are governed by the first-order linear ODE
\[
\dot Y(t) \;+\; \delta\,Y(t) \;=\; s\,F(t),\qquad Y(0)=Y_0,
\]
with depreciation rate $\delta\in[0,1]$, savings rate $s\in[0,1]$, GDP forcing $F(t)$, and initial capital stock $Y_0>0$. In the notation of Definition~\ref{defn:lin-ode}, this is the $d=1$, $m=1$ instance with $A_1=1$, $A_0=\delta$, forcing $sF(t)$, and initial data $g_0=Y_0$. In our experiment, we set an initial condition $Y_0=3.1$. We apply the linear integral method of
Section~\ref{subsec:lin-inte-2} with no normalization and signature
truncation level $N=3$. First consider the results of calibration on the entire interval, shown in Figure~\ref{fig:solow_example}. The relative integrated forcing and solution errors were 6.59e-08 and 1.14e-06  respectively. The lower truncation level here reflects the smoother, first-order nature of the Solow ODE in comparsion to the el-centro problem, allowing for lower level signatures to capture the dynamics.

\begin{figure}[H]
  \centering
  \captionsetup{font=small,skip=4pt}
  \includegraphics[width=0.9\linewidth]{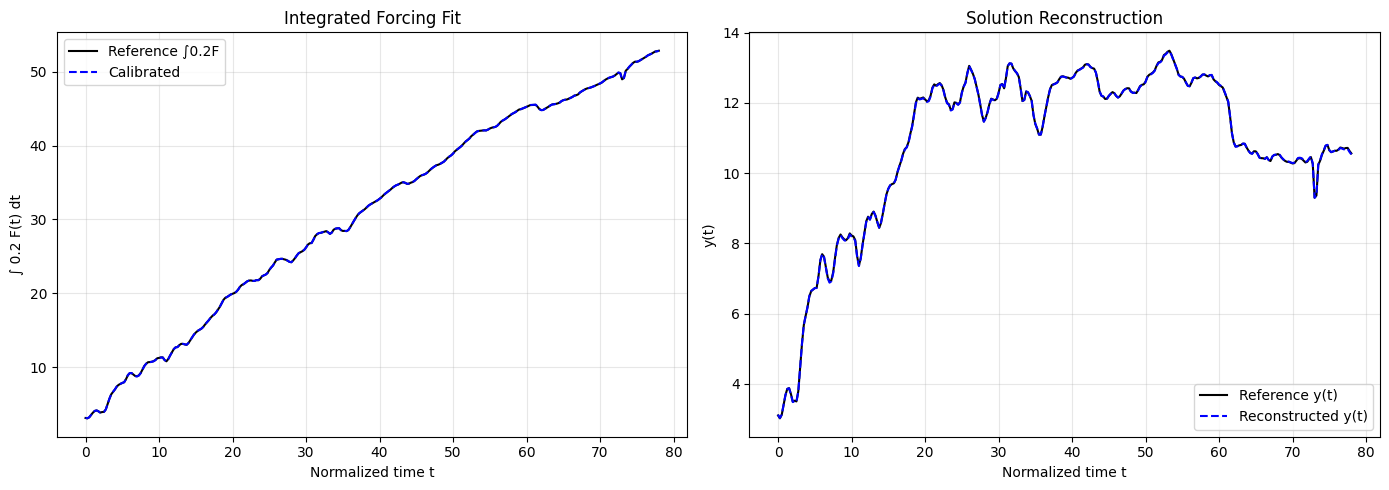}
  \caption{Solow ODE calibration. Left: GDP forcing match. Right: capital-stock solution match.}
  \label{fig:solow_example}
\end{figure}
\vspace{-0.75em}

Next, a testing and training experiment is done using the rolling retrain protocol, using the same hyperparameters as before. The first 50 data points are used for training, with periodic retraining applied every 10 steps. The results in Figure~\ref{fig:solow_retrain_example} and Table~\ref{tab:6.2results} confirm the low depth signatures capabilities in this case. 

\begin{figure}[H]
  \centering
  \captionsetup{font=small,skip=4pt}
  \includegraphics[width=0.9\linewidth]{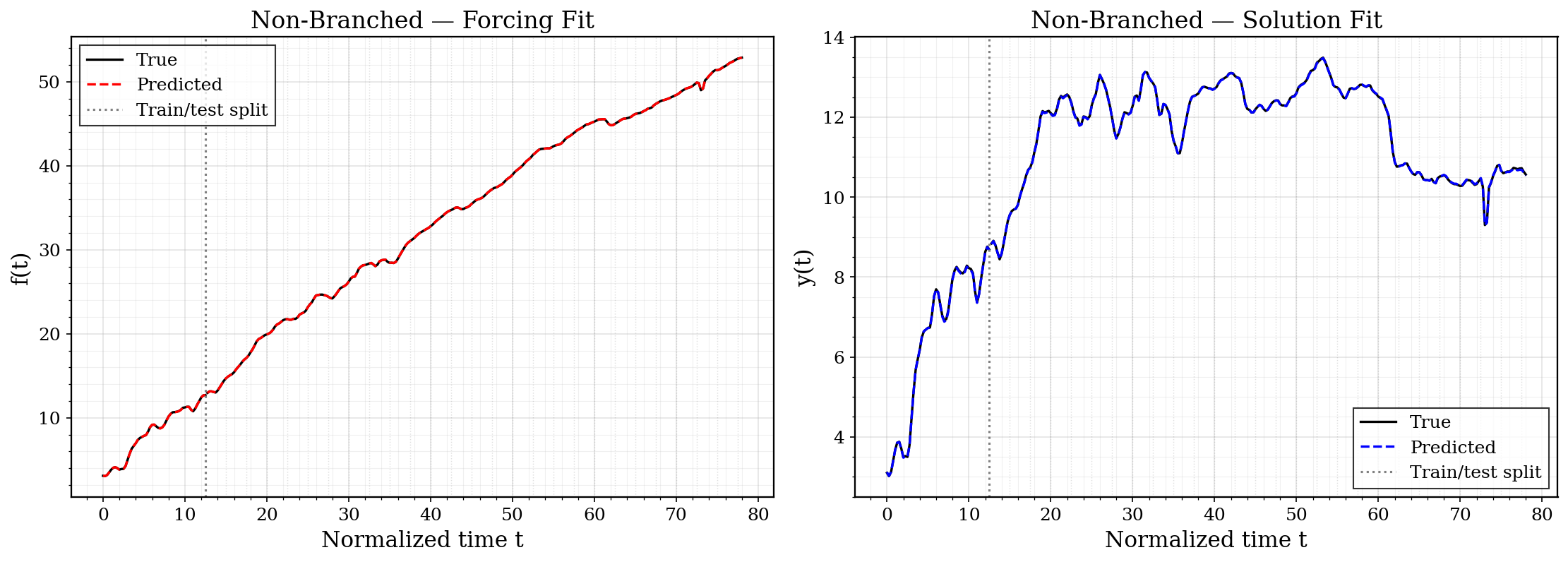}
  \caption{Solow streaming prediction with the rolling retrain protocol.}
  \label{fig:solow_retrain_example}
\end{figure}
\vspace{-0.75em}

\begin{table}[H]
\centering
\begin{tabular}{lcc}
\hline
 & Training & Testing \\
\hline
Forcing  & 2.91e-9 & 6.53e-29 \\
Solution & 1.91e-06 & 1.04e-06 \\
\hline
\end{tabular}
\caption{Results for the rolling-retrain Solow experiment}
\label{tab:6.2results}
\end{table}

\subsection{Linear ODE with fBM Forcing Example} 
Consider a linear second order ODE (SDOF equation) driven by a fractional Brownian motion forcing term. We use the following governing equation.
\[
m\ddot{U} + c\dot{U} + k U = F(t), \qquad U(0) =a, \quad U'(0) =b,
\] 
In our simulation, $F(t)$ is a 3000 point fBM on the interval from 0 to 1 with hurst parameter $H=.25$. \(m,c,\) and \(k\) are set at 1, 10, and 5 respectively. We compare training and prediction performance across the baseline signature model, t-lift, and neural lift models, also comparing across methods from Section~\ref{subsec:lin-inte} and Section~\ref{subsec:lin-inte-2}, refered to as method 1 and method 2, respectively. For each method, an RBF signature kernel with signature depth 3 and $\sigma=1$ is employed along with the robust normalization method. The standard training and prediction method are applied to a 90/10 test train split, with periodic retraining every 2 steps. For our neural lift, 3 extensions are trained with hidden depths $(2, 4, 8, 4, 2)$ over 1500 ADAM epochs. Overall learning rate, model loss weight, and shuffle loss weight were set to be 5e-3, 10, and 1e-4 respectively, with a learning rate scheduler reducing losses for every 300 stale iterations.

We first refer to Table~\ref{tab:6.3table1} for comparisons of the methods. The first method vastly outperforms the second, likely due to the double integration of the forcing introducing approximation errors. The second method also fails to predict well, showing inconsistent results. For problems with such rough forcings or higher orders of integrations required, it is therefore recommended to use Method 1.

\begin{table}[ht]
\centering
\begin{tabular}{lcccccc}
\toprule
& \multicolumn{3}{c}{\textbf{Method 1}} & \multicolumn{3}{c}{\textbf{Method 2}} \\
\cmidrule(lr){2-4} \cmidrule(lr){5-7}
& Baseline & \(t\)-lift & Branched & Baseline & \(t\)-lift & Branched \\
\midrule
Training Solution & 4.56e-11 & 2.15e-11 & \textbf{1.22e-11} & 1.12e-06 & 2.68e-07 & \textbf{5.46e-09} \\
Testing Solution  & 4.76e-09 & 3.18e-11 & \textbf{1.17e-13} & 6.24e-03 & \textbf{6.01e-03} & 3.55e-02 \\
\bottomrule
\end{tabular}
\caption{Relative MSE of the solution $u$ across all variants and methods on the training and testing splits.}
\label{tab:6.3table1}
\end{table}

Next, we compare the results of Method 1 on the calibration and prediction task. Results are shown in Table~\ref{tab:6.3table2}. Across the training and testing sections, both branched models outperform the baseline model. The neural network extension performs magnitudes better than the t lift in calibration, and performs better in predicting the solution as well. This suggests the neural branched model is able to sufficiently capture the branched structure within the rough paths. We refer to Figure~\ref{fig:6.3fig1} for results of the neural model on testing and prediction.

\begin{table}[ht]
\centering
\begin{tabular}{lcccc}
\toprule
& \multicolumn{2}{c}{\textbf{Training}} & \multicolumn{2}{c}{\textbf{Testing}} \\
\cmidrule(lr){2-3} \cmidrule(lr){4-5}
& Forcing & Solution & Forcing & Solution \\
\midrule
Baseline   & 1.89e-04 & 4.56e-11 & 1.96e-01 & 4.76e-09 \\
t-lift Solution   & 2.57e-05 & 2.15e-11 & \textbf{3.32e-03} & 3.18e-11 \\
Branched Solution & \textbf{3.36e-07} & \textbf{1.22e-11} & 8.91e-03 & \textbf{1.17e-13} \\
\bottomrule
\end{tabular}
\caption{Relative MSE of the forcing $f$ and solution $u$ for Method 1 (direct forcing match) across all variants on the training and testing splits.}
\label{tab:6.3table2}
\end{table}

\begin{figure}[h]
    \centering
    \includegraphics[width=1\linewidth]{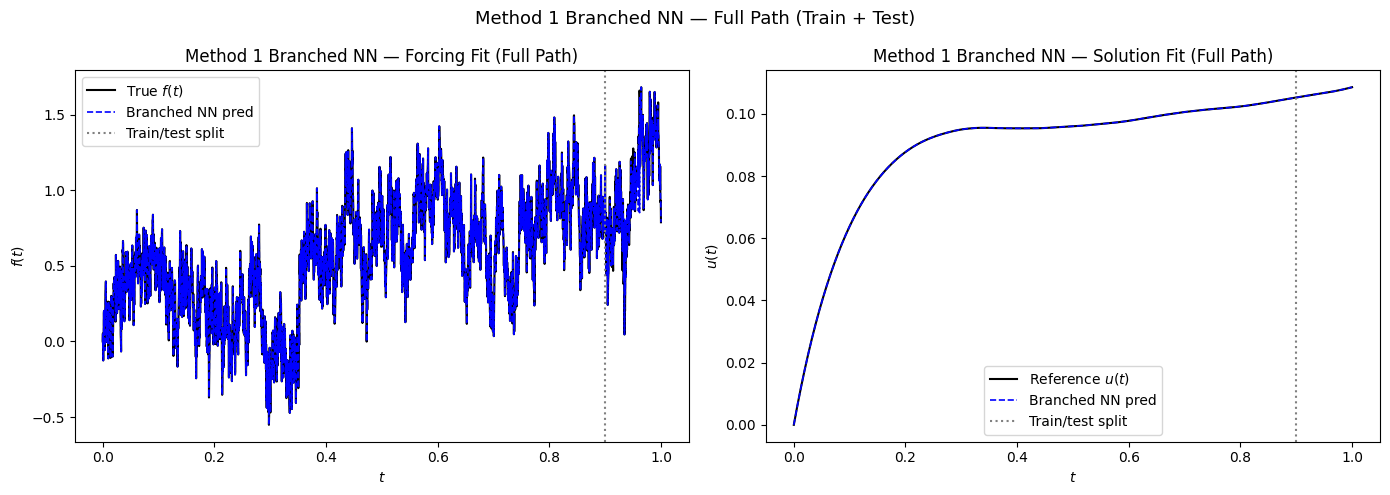}
    \caption{Forcing and Solution calibration and Prediction for a linear ODE driven by FBM.}
    \label{fig:6.3fig1}
\end{figure}

\vspace{-0.75em}

\subsection{Modeling Nonlinear ODE driven by FBM (Duffing Model)}\label{subsec:duffing}

The Duffing oscillator extends the second-order SDOF system with a nonlinear cubic term and has broad applications across many fields. It is used to model nonlinear vibrations in timber structures \cite{timberduffingmodel}, energy harvesting under magnetic forcing and its noisy extensions \cite{magnet_duffing,magnet_duffing_noisy}, financial asset dynamics with fractional Gaussian processes \cite{duffing_FGN_modeling}, and complex signal processing in high-noise environments \cite{signal_processing_duffing}. Duffing oscillators driven by Gaussian noise are also employed for stochastic resonance and weak-signal detection, including mechanical fault diagnosis such as rolling bearing and rotor faults \cite{stochastic_duffing_detection,SR_duffing}, and have been further developed within stochastic modeling frameworks \cite{stoch_duffing}.

We apply our branched signature kernel method to solve Duffing oscillators of the standard form with Neumann and Dirichlet boundary conditions as specified in Example \ref{defn:duffing} using the nonlinear approximation method described in \ref{subsec:nonlin-inte}. Our time values consist of 3000 points over the interval 0 to 1. We set $a=0, b=1, k_0=5, k_1=10, \gamma=10$ as our constants and coefficients. The forcing is taken as a fractional Brownian motion with hurst parameter $H=.25$. The first 90 percent of values are used for training while testing is done on the last 10 percent. A signature depth of 3 is used together with column-wise robust normalization. The kernel is chosen to be radial basis with bandwidth $\sigma=3$. The standard prediction procedure is used, with retraining performed every 2 steps. The maximum LBFGS iteration is set to 300. The branched architecture employs hidden layer widths $(2, 4, 8, 4, 2)$ and learns 2 extension channels. Optimization proceeds using ADAM for 2000 iterations with learning rate $3e-4$ consisting of model loss weight $10$ and shuffle loss weight $10e-3$. We use the acceleration procedures from \ref{subsec:nonlinear-optimizations}, solving for $\alpha$ every 100 ADAM iterations while ramping up from 50 to 300 LFBGS iterations over the first 25 percent of ADAM iterations.

The results for both the baseline and branched model are summarized in Table~\ref{tab:6.4table1}. While both models fit extremely well in and out of sample, the branched model showed significant improvement over the baseline signature kernel model. This demonstrates the neural networks ability to encode the branched information not captured by the classical signature. The results for the branched models testing and training fits are presented in Figure~\ref{fig:6.4figure1}

\begin{table}[ht]
\centering
\begin{tabular}{lcccc}
\toprule
& \multicolumn{2}{c}{\textbf{Training}} & \multicolumn{2}{c}{\textbf{Testing}} \\
\cmidrule(lr){2-3} \cmidrule(lr){4-5}
& Forcing & Solution & Forcing & Solution \\
\midrule
Non-branched & 4.60e-02 & 7.86e-05 & 4.88e-03 & 9.31e-08 \\
Branched     & \textbf{1.80e-02} & \textbf{1.66e-05} & \textbf{3.42e-03} & \textbf{4.79e-08} \\
\bottomrule
\end{tabular}
\caption{Relative MSE of the forcing target and solution $u$ for the Duffing model}
\label{tab:6.4table1}
\end{table}

\begin{figure}
    \centering
    \includegraphics[width=1\linewidth]{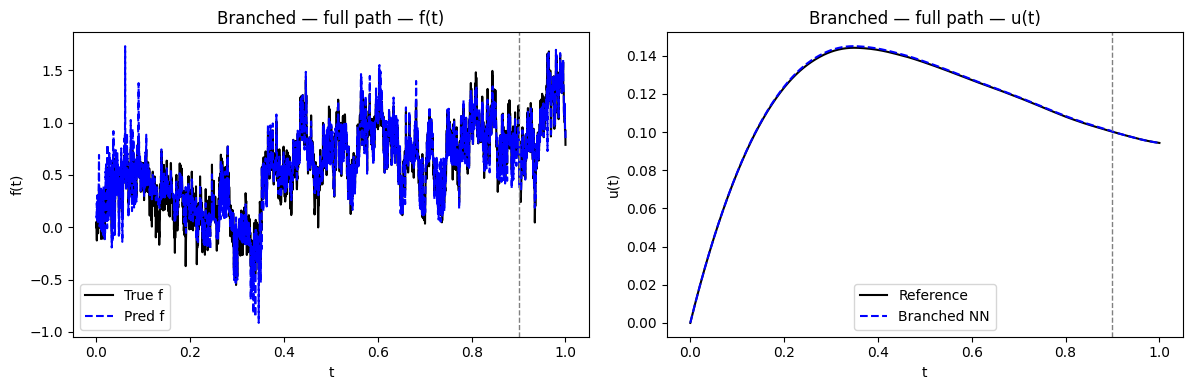}
    \caption{Branched Signature Kernel Model fit for the Duffing Equation}
    \label{fig:6.4figure1}
\end{figure}

\subsection{Variable Coefficient ODE: Arias-Intensity degraded SDOF}

We have considered the case of linear ODEs with constant coefficients. However, we may also examine equations in which the coefficients depend on the time or forcing themselves. 
\begin{defn}[Variable Coefficient Second Order Linear ODE]\label{defn:vc_ode}
\begin{align*}\color{red}
&k_1(t,f(t)) u''(t) + k_2(t,f(t)) u'(t) + k_3(t,f(t)) u(t)=f(t), \quad u(0)=a,\quad
u'(0)=b,
\end{align*}
with functions $k_1,k_2,k_3$ and $x\in[0,T]$.
\end{defn}

Forms of the SDOF system with time-varying coefficients have been studied in \cite{variablecoefficient1,variablecoefficient2,variablecoefficient3}. As long as the coefficients do not depend on 
$u(t)$ or its derivatives, the ODE reamins linear and the signature-kernel algorithm is unchanged: one simply evaluates the coefficient functions on the grid when forming the alphas and kernel matrix ansatz. Our solver therefore applies directly to ODEs with path-dependent coefficients. We illustrate this with an example from earthquake engineering. The Arias intensity \cite{Arias1970measure}
\[
I_A(t)\;:=\;\frac{\pi}{2g}\int_{0}^{t}a(s)^{2}\,\d s
\]
is a path functional of the ground acceleration that quantifies the cumulative energy delivered to the structure. Given a damage source $D(t)$, effective stiffness is often modeled as decaying with damage \cite{ramtani2013contmechanics}:
$$
k_{eff}(t) = (1-D(t))k_0,
\qquad
\frac{k_{eff}(t)}{m} = \frac{(1-D(t))k_0}{m} = (1-D(t))\omega_0^2.
$$

As the Arias intensity strongly correlates with destructive potential \cite{cabanas1997Ariasdestruction}, we introduce a degradation rate \(\delta\) and use \(I_A(t)\) as the damage variable, yielding the variable stiffness coefficient
\[
A_0(t)=\omega_n^{\,2}\bigl(1-\delta\,I_A(t)\bigr)_{+},
\]
where \((\cdot)_{+}:=\max(\cdot,0)\) enforces non-negative stiffness. Substituting this into Definition~\ref{defn:vc_ode} gives the Arias-intensity degraded SDOF model:
\begin{defn}[Arias-intensity degraded SDOF]\label{defn:Arias-sdof}
\[
\ddot u(t) \;+\; 2\xi\omega_n\,\dot u(t)
\;+\; A_0(t)\,u(t)
\;=\; -9.81\,a(t),
\qquad u(0)=0,\quad \dot u(0)=0,
\]
with degradation rate \(\delta\ge 0\). The coefficient \(A_0(t)\) depends on the forcing path through the cumulative integral \(I_A(t)\).
\end{defn}

In the collocation framework of \eqref{eq:block-kernel}, the block matrix becomes
\[
L_{ji} \;=\; A_2\,K_{ji} \;+\; A_1\,K^{(1)}_{ji}
\;+\; A_0(t_j)\,K^{(2)}_{ji},
\]
where \(I_A(t_j)=\tfrac{\pi}{2g}\sum_{k=0}^{j-1}a(t_k)^{2}(t_{k+1}-t_k)\) is computed once from the count-sampled forcing path \(\bff_j=(a(t_0),\ldots,a(t_j))\). The integral method and calibration protocol of Section~\ref{subsec:lin-inte} and Section~\ref{sec:test-train} remain unchanged.

For the experiment, calibration is compared across the baseline and branched signature kernel models, along with a solver using a point-wise rbf kernel with no signatures. We consider an example in which the acceleration $a(t)$ is given by a fractional brownian motion with hurst parameter $H=.2$. We set $\omega_n = 2\pi, \xi = .05, \delta = 5$. 1000 observations are taken over the interval from 0 to 1. Refer to Figure~\ref{fig:Ariassetup} for plots of the forcing, Arias intensity, and effective stiffness coefficient.

\begin{figure}
    \centering
    \includegraphics[width=1\linewidth]{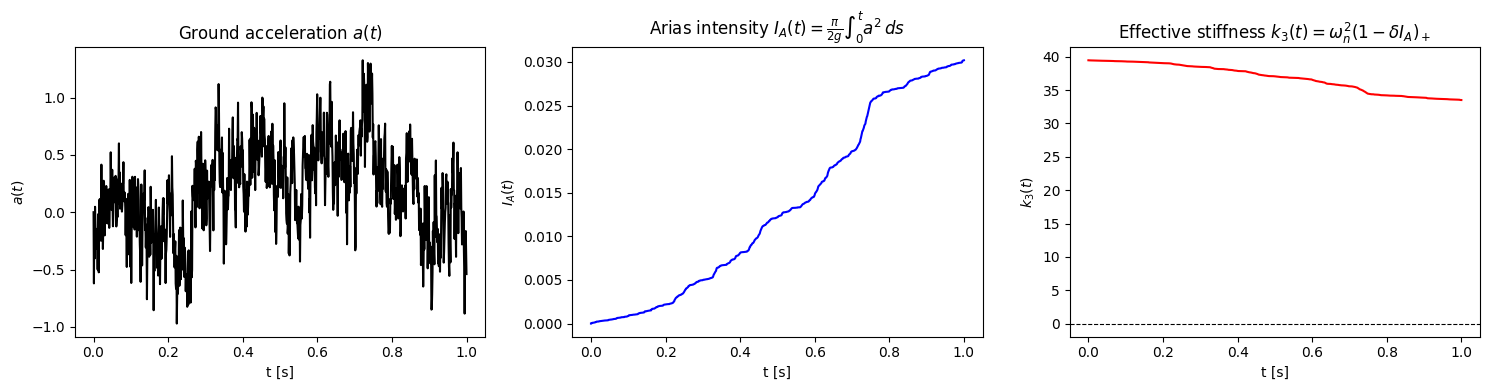}
    \caption{plots of generated Forcing, Arias Intensity, and Stiffness Coefficient}
    \label{fig:Ariassetup}
\end{figure}

For each case, a radial basis kernel with $\sigma=1$ is used. Signature solvers use a signature depth of 2 together with a column‑wise robust normalization. The branched model trains 3 extensions using hidden layers $(4, 8, 16, 8, 4, 2)$. Optimization is done for 2000 epochs, employing the ADAM algorithm with learning rate $10e-4$, a model loss weighted by 10, and a shuffle regularization term weighted by $10e-4$. A plateau-based learning rate schedule reduces the step size by a factor of 0.5 after 500 epochs without improvement. The results for the three models are summarized in Table~\ref{tab:Ariasresults} and plotted in Figure~\ref{fig:Ariascalibration}.

\begin{table}[ht]
    \centering
    \begin{tabular}{lcc}
        \hline
        Model & Forcing & Solution \\
        \hline
        RBF (No Signatures) Kernel Model           & $5.49\mathrm{e}{-1}$  & $1.17\mathrm{e}{-3}$  \\
        Non-branched Signature Kernel Model  & $2.94\mathrm{e}{-4}$  & $1.46\mathrm{e}{-8}$  \\
        Branched Signature Kernel Model      & $\mathbf{6.34\mathrm{e}{-6}}$  & $\mathbf{3.37\mathrm{e}{-11}}$ \\
        \hline
    \end{tabular}
    \caption{Relative MSE figures for the Arias Intensity Degraded SDOF Problem}
    \label{tab:Ariasresults}
\end{table}

The base RBF Kernel fails to adequately model the rough forcing term and misses some of the shaping within the solution. The base signature model sees better results, but is unable to capture all of the branched information within the forcing path. The Branched Model shows the best performance out of all models by significant orders of magnitudes. We note that for the point-wise RBF model, the $\sigma$ parameter may be "over-tuned", i.e. shrunk extremely small so that the kernel matches the forcing's roughness. However, this overfitting leads to a complete inability to predict out of sample. The branched signature kernel model remains able to fit both the forcing and predict out of sample without overfitting to the data, as seen in Example~\ref{subsec:duffing}

\begin{figure}
    \centering
    \includegraphics[width=1\linewidth]{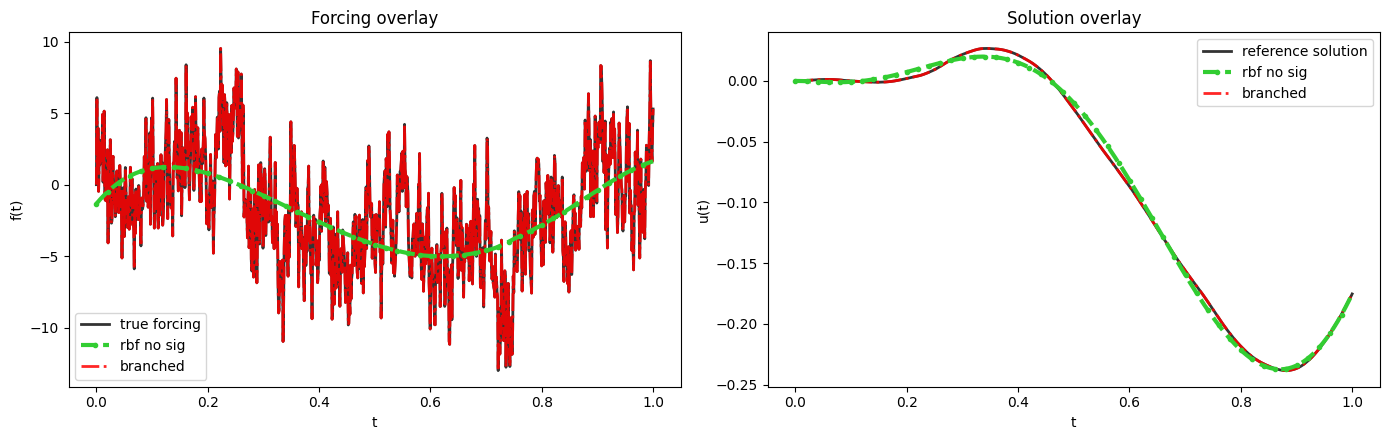}
    \caption{Calibration results for the branched signature kernel and a point-wise (no signatures) RBF kernel on the Arias Intensity degraded SDOF equation}
    \label{fig:Ariascalibration}
\end{figure}

\subsection{Noisy Kuramoto oscillator system}

The general Kuramoto oscillator system \cite{10.1007/BFb0013365} on $n$ coupled phase oscillators is given by
\begin{equation}
\frac{d\theta_i}{dt} \;=\; \omega_i + \frac{1}{n}\sum_{j=1}^{n} K_{ij}\, \sin(\theta_j - \theta_i), \qquad i = 1, \ldots, n,
\label{eq:kuramoto}
\end{equation}
where $\theta_i(t) \in \mathbb{T}$ is the phase of oscillator $i$, $\omega_i \in \mathbb{R}$ its natural frequency, and $K_{ij} \in \mathbb{R}$ the coupling strength. Here $n$ is the number of oscillators, distinct from the count-sampled path count $N$ used elsewhere. Introducing a uniform coupling constant $K$ and additive white noise $\zeta_i$ yields the noisy system \cite{sakaguchi1988cooperative}, which can be written either as an ODE or in differential form:
\[
\frac{d\theta_i}{dt}
= \omega_i + \zeta_i + \frac{K}{n}\sum_{j=1}^{n} \sin(\theta_j - \theta_i)
\qquad\text{or}\qquad
d\theta_i(t)
= \Bigl[ \omega_i + \frac{K}{n}\sum_{j=1}^{n} \sin(\theta_j - \theta_i) \Bigr] dt + d B_i^H(t),
\]
where $B_i^H(t)$ is a fractional Brownian motion with hurst parameter $H$. We may write its ODE form as:
$$\frac{d\theta_i}{dt} = \omega_i + \frac{K}{n}\sum_{j=1}^{n} \sin(\theta_j - \theta_i) + \zeta_i(t),\quad \text{or}\quad
\frac{d\theta_i}{dt} - \omega_i - \frac{K}{n}\sum_{j=1}^{n} \sin(\theta_j - \theta_i) = \zeta_i(t).$$

In the first form, one could observe the right-hand side and use the signature-kernel algorithm to calibrate it and recover $\theta$, a linear task in which the branched signature captures the fractional Gaussian noise. Here we instead analyze the second form: we observe the noise and use the nonlinear approximation algorithm to calibrate the left-hand side to the right, enforcing the physical law.

First, let us describe how to generate the required data and reference solution. We fix $n$, frequencies $\omega_i$ for $i=1,\dots,n$, and coupling constant $K$, choose a uniform grid $0 = t_0, \dots, t_{N-1} = T$, and set the initial phase vector $\theta(0)=\bigl(\theta_1(0),\ldots,\theta_n(0)\bigr)$. For each oscillator \(i\), we generate a fractional Brownian motion sample path $B_i^H(t_0), \dots, B_i^H(t_{N-1})$ with Hurst parameter \(H\). Using increments $\Delta B_i^H(t_k):=B_i^H(t_{k+1})-B_i^H(t_k)$ and time step $\Delta t=t_{k+1}-t_k$, $k = 0,\dots,N-2$, we construct the Euler approximation
\[
\theta_i(t_{k+1}) = \theta_i(t_k) + \Bigl[ \omega_i + \frac{K}{n}\sum_{j=1}^{n}\sin\bigl(\theta_j(t_k)-\theta_i(t_k)\bigr) \Bigr]\Delta t + \Delta B_i^H(t_k),
\]
for \(k = 0,\dots,N-2\) and \(i = 1,\dots,n\). For the signature-kernel solver via \ref{subsec:lin-inte}, we take the raw path and fractional Gaussian noise approximated by
\[
\mathbf{f}_k = \bigl(t_k,\eta_1^H(t_k),\eta_2^H(t_k),\ldots,\eta_n^H(t_k)\bigr), \qquad \eta_i^H(t_k) \approx \frac{\Delta B_i^H(t_k)}{\Delta t}.
\]

For our simulation, we set $n=3$ oscillators with frequencies of $-1, -.3$, and $1.5$, respectively. The coupling constant is set to $k=3$. Each oscillator is driven by a $3000$ point fractional Brownian motion with Hurst parameter $H=0.4$. Initial values of $\theta$ are selected randomly along the range $[0,2\pi)$. Signature depth for the models is set at 3, and robust normalization is used along with an RBF kernel with parameter $\sigma=1$ and LBFGS iteration count of 400. For this example, trapezoid integration is replaced by a left point Reimann sum as to be consistent with the generation scheme. The branched model trains 2 extensions with hidden layers $(8, 16, 32, 16, 8)$. Optimization is done with 1000 ADAM iterations with an overall learning rate of $3e-4$, model loss weight of $1e3$, and shuffle loss weight of $1e-3$. The model loss is set higher in this case to compensate for the contribution of all 3 generated paths to the shuffle loss. Acceleration is applied through the methods in \ref{subsec:nonlinear-optimizations}. Due to the highly nonlinear sine term causing spikes in losses, we choose to optimize for $\alpha$ at every ADAM iteration. LFBGS Iterations start off at 20 and linearly ramp up to the maximum of 400 at 75 percent of the iterations.

Results showing fits of the models for the forcing, derivative of the solution, and solution for the branched and non branched models are presented in Table~\ref{tab:relative_mse_kuramoto}. Plots of the branched model fits for each oscillators forcing and solution are presented in Figure~\ref{fig:kuramoto}. Both models fit all oscillators extremely well. However, the branched model shows superior performance in all categories, once again demonstrating its ability to encode information within the branched signature.

\begin{figure}[h!]
    \centering
    \includegraphics[width=1\linewidth]{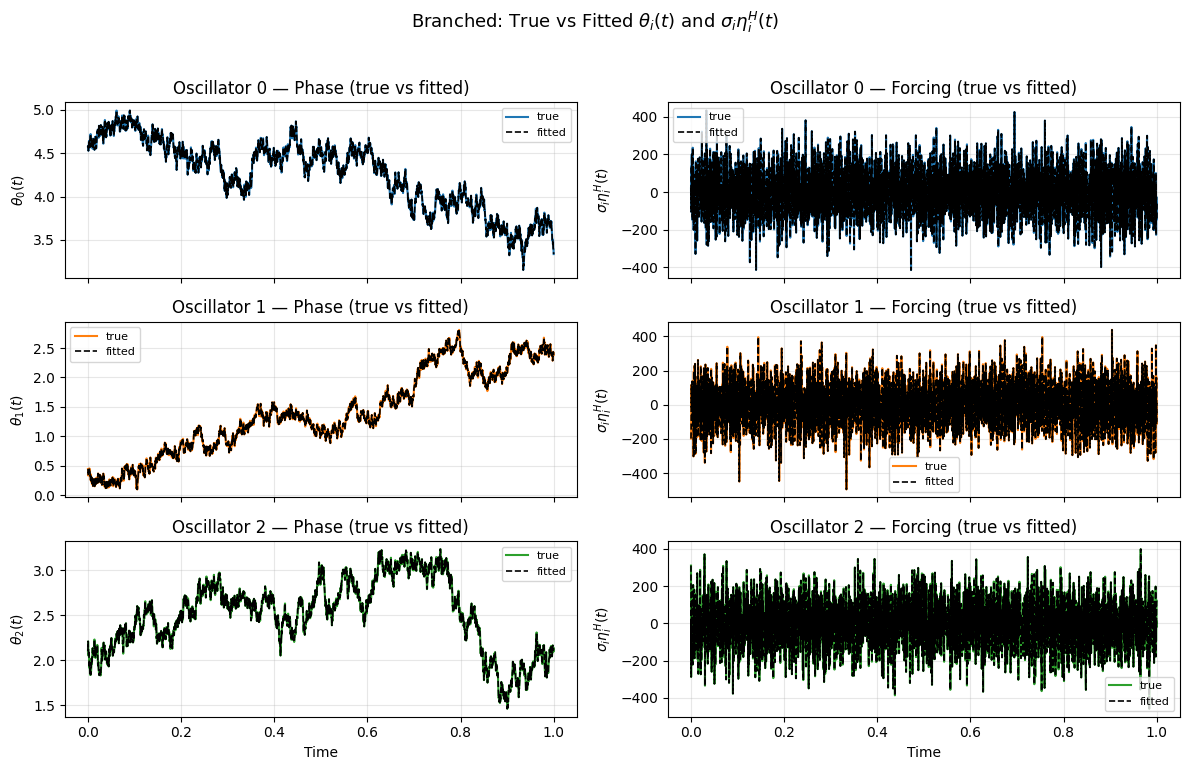}
    \caption{Solution and forcing fits for the branched signature kernel model on the noisy Kuramoto oscillator.}
    \label{fig:kuramoto}
\end{figure}

\begin{table}[ht]
    \centering
    \begin{tabular}{lccc}
        \hline
        Model & Forcing & \(d\theta / dt\) & \(\theta\) \\
        \hline
        Non-branched Signature Kernel Model & $1.73\mathrm{e}{-6}$  & $1.73\mathrm{e}{-6}$  & $1.78\mathrm{e}{-7}$  \\
        Branched Signature Kernel Model     & $\mathbf{5.25\mathrm{e}{-15}}$ & $\mathbf{1.31\mathrm{e}{-11}}$ & $\mathbf{1.33\mathrm{e}{-8}}$ \\
        \hline
    \end{tabular}
    \caption{Relative MSE of models for the Noisy Kuramoto Oscillator problem.}
    \label{tab:relative_mse_kuramoto}
\end{table}

%%%%%%%%%%%%%%%%%%%%%%%%%%%%%%%%%%
\section*{Acknowledgements}

% We would like to thank Mert G\"{u}rb\"{u}zbalaban, Yuanhan Hu and Yingli Wang. 
% Qi Feng is partially supported by the grants NSF DMS-2306769 and DMS-2420029. 
% Xiaoyu Wang is supported by the Guangzhou-HKUST(GZ) Joint Fund-
% ing Program (No.2025A03J3556) and Guangzhou Municipal Key Laboratory of Financial
% Technology Cutting-Edge Research (No.2024A03J0630).
% Lingjiong Zhu is partially supported by the grants NSF DMS-2053454 and DMS-2208303. 

\newpage
\bibliographystyle{apalike}
\bibliography{references}

\begin{thebibliography}{}

\bibitem[Abbasbandy et~al., 2015]{rkhs_nonlinear_bvps}
Abbasbandy, S., Azarnavid, B., and Alhuthali, M.~S. (2015).
\newblock A shooting reproducing kernel hilbert space method for multiple
  solutions of nonlinear boundary value problems.
\newblock {\em Journal of Computational and Applied Mathematics}, 279:293--305.

\bibitem[Ali and Feng, 2025]{ali2025branchedsignaturemodel}
Ali, M. and Feng, Q. (2025).
\newblock Branched signature model.

\bibitem[Alòs et~al., 2025]{alos2025volatilitymodelingroughpaths}
Alòs, E., Òscar Burés, de~Santiago, R., and Vives, J. (2025).
\newblock Volatility modeling with rough paths: A signature-based alternative
  to classical expansions.

\bibitem[Arias, 1970]{Arias1970measure}
Arias, A. (1970).
\newblock A measure of earthquake intensity.
\newblock {\em Seismic Design for Nuclear Power Plants, R.J.\ Hansen (ed.), MIT
  Press}, pages 438--483.

\bibitem[Bayer et~al., 2025]{bayer2025localregressionpathspaces}
Bayer, C., Gogolashvili, D., and Pelizzari, L. (2025).
\newblock Local regression on path spaces with signature metrics.

\bibitem[Bayraktar et~al., 2024]{bayraktar2024deep}
Bayraktar, E., Feng, Q., and Zhang, Z. (2024).
\newblock Deep signature algorithm for multidimensional path-dependent options.
\newblock {\em SIAM Journal on Financial Mathematics}, 15(1):194--214.

\bibitem[Boedihardjo et~al., 2016]{boedihardjo2016signature}
Boedihardjo, H., Geng, X., Lyons, T., and Yang, D. (2016).
\newblock The signature of a rough path: Uniqueness.
\newblock {\em Advances in Mathematics}, 293:720--737.

\bibitem[Caba{\~n}as et~al., 1997]{cabanas1997Ariasdestruction}
Caba{\~n}as, L., Benito, B., and Herr{\'a}iz, M. (1997).
\newblock An approach to the measurement of the potential structural damage of
  earthquake ground motions.
\newblock {\em Earthquake Engineering \& Structural Dynamics}, 26(1):79--92.

\bibitem[Ceylan et~al., 2026]{ceylan2026universal}
Ceylan, M., Kwossek, A.~P., and Pr{\"o}mel, D.~J. (2026).
\newblock Universal approximation with signatures of non-geometric rough paths.
\newblock {\em arXiv preprint arXiv:2602.05898}.

\bibitem[Chen et~al., 2021]{nonlinearpdes_GP_RKHS}
Chen, Y., Hosseini, B., Owhadi, H., and Stuart, A.~M. (2021).
\newblock Solving and learning nonlinear pdes with gaussian processes.

\bibitem[Chevyrev and Kormilitzin, 2025]{chevyrev2025primer}
Chevyrev, I. and Kormilitzin, A. (2025).
\newblock A primer on the signature method in machine learning.
\newblock In {\em Signature Methods in Finance: An Introduction with
  Computational Applications}, pages 3--64. Springer.

\bibitem[Chevyrev and Oberhauser,
  2022]{chevyrev2022signaturemomentscharacterizelaws}
Chevyrev, I. and Oberhauser, H. (2022).
\newblock Signature moments to characterize laws of stochastic processes.

\bibitem[Cohen et~al., 2023]{cohen2023nowcastingsignaturemethods}
Cohen, S.~N., Lui, S., Malpass, W., Mantoan, G., Nesheim, L., Áureo~de Paula,
  Reeves, A., Scott, C., Small, E., and Yang, L. (2023).
\newblock Nowcasting with signature methods.

\bibitem[Connes and Kreimer, 1999]{connes1999hopf}
Connes, A. and Kreimer, D. (1999).
\newblock Hopf algebras, renormalization and noncommutative geometry.
\newblock In {\em Quantum field theory: perspective and prospective}, pages
  59--109. Springer.

\bibitem[Cox et~al., 2026]{cox2026universal}
Cox, S., Khedher, A., and Maessen, T. (2026).
\newblock Universal approximation by signatures for infinite-dimensional rough
  paths.
\newblock {\em arXiv preprint arXiv:2603.03058}.

\bibitem[Cuchiero et~al., 2023]{cuchiero2023signature}
Cuchiero, C., Gazzani, G., and Svaluto-Ferro, S. (2023).
\newblock Signature-based models: theory and calibration.
\newblock {\em SIAM journal on financial mathematics}, 14(3):910--957.

\bibitem[Cuchiero et~al., 2025]{cuchiero2025universal}
Cuchiero, C., Primavera, F., and Svaluto-Ferro, S. (2025).
\newblock Universal approximation theorems for continuous functions of
  c{\`a}dl{\`a}g paths and l{\'e}vy-type signature models.
\newblock {\em Finance and Stochastics}, 29(2):289--342.

\bibitem[Deng et~al., 2012]{signal_processing_duffing}
Deng, X., Liu, H., and Long, T. (2012).
\newblock A new complex duffing oscillator used in complex signal detection.
\newblock {\em Chinese Science Bulletin}, 57.

\bibitem[Fang et~al., 2023]{ppde_nn_2}
Fang, B., Ni, H., and Wu, Y. (2023).
\newblock A neural rde-based model for solving path-dependent pdes.

\bibitem[{Federal Reserve Bank of St. Louis}, 2024]{fred_gdp}
{Federal Reserve Bank of St. Louis} (2024).
\newblock {Real Gross Domestic Product}.
\newblock https://fred.stlouisfed.org/series/A191RL1Q225SBEA.
\newblock FRED, Federal Reserve Bank of St. Louis. Accessed: 2026-05-19.

\bibitem[Feng et~al., 2023]{feng2023}
Feng, Q., Luo, M., and Zhang, Z. (2023).
\newblock Deep signature fbsde algorithm.
\newblock {\em Numerical Algebra, Control and Optimization}, 13(3\&4):500--522.

\bibitem[Fornberg and Flyer, 2015]{RBF_kernel_methods}
Fornberg, B. and Flyer, N. (2015).
\newblock Solving pdes with radial basis functions.
\newblock {\em Acta Numerica}, 24:215–258.

\bibitem[Friz and Victoir, 2010]{Friz_Victoir_2010}
Friz, P.~K. and Victoir, N.~B. (2010).
\newblock {\em Multidimensional Stochastic Processes as Rough Paths: Theory and
  Applications}.
\newblock Cambridge Studies in Advanced Mathematics. Cambridge University
  Press.

\bibitem[Genet and Inzirillo, 2025]{keras_sig}
Genet, R. and Inzirillo, H. (2025).
\newblock Keras sig: Efficient path signature computation on gpu in keras 3.

\bibitem[Green et~al., 2012]{magnet_duffing_noisy}
Green, P., Worden, K., Atallah, K., and Sims, N. (2012).
\newblock {\em The Benefits of Duffing-type Nonlinearities and Electrical
  Optimisation of a Randomly Excited Energy Harvester}, volume~6, pages
  657--667.

\bibitem[Gubinelli, 2010]{gubinelli2010ramification}
Gubinelli, M. (2010).
\newblock Ramification of rough paths.
\newblock {\em Journal of Differential Equations}, 248(4):693--721.

\bibitem[Gyurk{\'o} et~al., 2013]{gyurko2013extracting}
Gyurk{\'o}, L.~G., Lyons, T., Kontkowski, M., and Field, J. (2013).
\newblock Extracting information from the signature of a financial data stream.
\newblock {\em arXiv preprint arXiv:1307.7244}.

\bibitem[Hairer and Kelly, 2015]{hairer2015geometric}
Hairer, M. and Kelly, D. (2015).
\newblock Geometric versus non-geometric rough paths.
\newblock In {\em Annales de l'IHP Probabilit{\'e}s et statistiques},
  volume~51, pages 207--251.

\bibitem[Hambly and Lyons, 2010]{hambly2010uniqueness}
Hambly, B. and Lyons, T. (2010).
\newblock Uniqueness for the signature of a path of bounded variation and the
  reduced path group.
\newblock {\em Annals of Mathematics}, pages 109--167.

\bibitem[Hao et~al., 2025]{PINNproblem3}
Hao, B., Braga-Neto, U., Liu, C., Wang, L., and Zhong, M. (2025).
\newblock Stability in training pinns for stiff pdes: Why initial conditions
  matter.

\bibitem[Horvath et~al.,
  2023]{horvath2023optimalstoppingdistributionregression}
Horvath, B., Lemercier, M., Liu, C., Lyons, T., and Salvi, C. (2023).
\newblock Optimal stopping via distribution regression: a higher rank signature
  approach.

\bibitem[Issa et~al., 2023]{PPDE_nn_kernel}
Issa, Z., Horvath, B., Lemercier, M., and Salvi, C. (2023).
\newblock Non-adversarial training of neural sdes with signature kernel scores.

\bibitem[Kansa, 1990a]{kansa2}
Kansa, E. (1990a).
\newblock Multiquadrics – a scattered data approximation scheme with
  applications to computational fluid-dynamics. ii: Solutions to parabolic,
  hyperbolic and elliptic partial differential equations.
\newblock {\em Computers \& Mathematics with Applications}, 19:147--161.

\bibitem[Kansa, 1990b]{kansa1}
Kansa, E. (1990b).
\newblock Multiquadrics—a scattered data approximation scheme with
  applications to computational fluid-dynamics—i surface approximations and
  partial derivative estimates.
\newblock {\em Computers \& Mathematics with Applications}, 19(8):127--145.

\bibitem[Kidger et~al., 2019]{DeepSignatureTransforms}
Kidger, P., Bonnier, P., Perez~Arribas, I., Salvi, C., and Lyons, T. (2019).
\newblock Deep signature transforms.
\newblock In {\em Advances in Neural Information Processing Systems},
  volume~32. Curran Associates, Inc.

\bibitem[Kidger and Lyons, 2021]{kidger2021signatory}
Kidger, P. and Lyons, T. (2021).
\newblock {S}ignatory: differentiable computations of the signature and
  logsignature transforms, on both {CPU} and {GPU}.
\newblock In {\em International Conference on Learning Representations}.
\newblock \url{https://github.com/patrick-kidger/signatory}.

\bibitem[Kir{\'a}ly and Oberhauser, 2019]{kiraly2019kernels}
Kir{\'a}ly, F.~J. and Oberhauser, H. (2019).
\newblock Kernels for sequentially ordered data.
\newblock {\em Journal of Machine Learning Research}, 20(31):1--45.

\bibitem[Krishnapriyan et~al., 2021]{PINNproblems2}
Krishnapriyan, A., Gholami, A., Zhe, S., Kirby, R., and Mahoney, M. (2021).
\newblock Characterizing possible failure modes in physics-informed neural
  networks.
\newblock In Ranzato, M., Beygelzimer, A., Dauphin, Y., Liang, P., and Vaughan,
  J.~W., editors, {\em Advances in Neural Information Processing Systems},
  volume~34, pages 26548--26560. Curran Associates, Inc.

\bibitem[Kuramoto, 1975]{10.1007/BFb0013365}
Kuramoto, Y. (1975).
\newblock Self-entrainment of a population of coupled non-linear oscillators.
\newblock In Araki, H., editor, {\em International Symposium on Mathematical
  Problems in Theoretical Physics}, pages 420--422, Berlin, Heidelberg.
  Springer Berlin Heidelberg.

\bibitem[Lai and Leng, 2015]{stochastic_duffing_detection}
Lai, Z.-H. and Leng, Y.-G. (2015).
\newblock Generalized parameter-adjusted stochastic resonance of duffing
  oscillator and its application to weak-signal detection.
\newblock {\em Sensors}, 15(9):21327--21349.

\bibitem[Lemercier et~al.,
  2021]{lemercier2021distributionregressionsequentialdata}
Lemercier, M., Salvi, C., Damoulas, T., Bonilla, E.~V., and Lyons, T. (2021).
\newblock Distribution regression for sequential data.

\bibitem[Levin et~al., 2016]{levin2016learningpastpredictingstatistics}
Levin, D., Lyons, T., and Ni, H. (2016).
\newblock Learning from the past, predicting the statistics for the future,
  learning an evolving system.

\bibitem[Li, 2001]{variablecoefficient3}
Li, Q. (2001).
\newblock Free vibration of sdof systems with arbitrary time-varying
  coefficients.
\newblock {\em International Journal of Mechanical Sciences}, 43(3):759--770.

\bibitem[Li et~al., 2000]{variablecoefficient1}
Li, Q., Fang, J., and Liu, D. (2000).
\newblock Exact solutions for free vibration of single-degree-of-freedom
  systems with nonperiodically varying parameters.
\newblock {\em Journal of Vibration and Control}, 6:449--462.

\bibitem[Li, 1999]{variablecoefficient2}
Li, Q.~S. (1999).
\newblock A new exact approach for analyzing free vibration of sdof systems
  with nonperiodically time varying parameters.
\newblock {\em Journal of Vibration and Acoustics}, 122(2):175--179.

\bibitem[Lobo et~al., 2019]{stoch_duffing}
Lobo, D., Ritto, T., Castello, D., and Cataldo, E. (2019).
\newblock Dynamics of a duffing oscillator with the stiffness modeled as a
  stochastic process.
\newblock {\em International Journal of Non-Linear Mechanics}, 116:273--280.

\bibitem[Lyons, 2014]{lyons2014roughpathssignaturesmodelling}
Lyons, T. (2014).
\newblock Rough paths, signatures and the modelling of functions on streams.

\bibitem[Lyons and McLeod, 2025]{lyons2025signaturemethodsmachinelearning}
Lyons, T. and McLeod, A.~D. (2025).
\newblock Signature methods in machine learning.

\bibitem[Lyons et~al., 2020]{lyons2020non}
Lyons, T., Nejad, S., and Perez~Arribas, I. (2020).
\newblock Non-parametric pricing and hedging of exotic derivatives.
\newblock {\em Applied Mathematical Finance}, 27(6):457--494.

\bibitem[Lyons et~al., 2007]{lyons2007differential}
Lyons, T.~J., Caruana, M., and L{\'e}vy, T. (2007).
\newblock {\em Differential equations driven by rough paths: Ecole d'Et{\'e} de
  Probabilit{\'e}s de Saint-Flour XXXIV-2004}.
\newblock Springer.

\bibitem[Mann and Sims, 2009]{magnet_duffing}
Mann, B. and Sims, N. (2009).
\newblock Energy harvesting from the nonlinear oscillations of magnetic
  levitation.
\newblock {\em Journal of Sound and Vibration}, 319:515--530.

\bibitem[Mohaddes et~al.,
  2025]{mohaddes2025regularizedlearningfractionalbrownian}
Mohaddes, A., Iafrate, F., and Lederer, J. (2025).
\newblock Regularized learning for fractional brownian motion via path
  signatures.

\bibitem[Pannier and Salvi, 2024]{pannier2024pathdependentpdesolverbased}
Pannier, A. and Salvi, C. (2024).
\newblock A path-dependent pde solver based on signature kernels.

\bibitem[Ramtani, 2013]{ramtani2013contmechanics}
Ramtani, S. (2013).
\newblock Basic concepts and models in continuum damage mechanics.
\newblock Technical report, HAL Open Archive.
\newblock hal-00776729, \url{https://hal.science/hal-00776729/document}.

\bibitem[Reizenstein and Graham, 2020]{iisignature}
Reizenstein, J. and Graham, B. (2020).
\newblock Algorithm 1004: The iisignature library: Efficient calculation of
  iterated-integral signatures and log signatures.
\newblock {\em ACM Transactions on Mathematical Software (TOMS)}.

\bibitem[Reynolds et~al., 2014]{timberduffingmodel}
Reynolds, T., Harris, R., and Chang, W.-S. (2014).
\newblock Nonlinear pre-yield modal properties of timber structures with
  large-diameter steel dowel connections.
\newblock {\em Engineering Structures}, 76:235--244.

\bibitem[Sabate-Vidales et~al., 2020]{ppde_nn_1}
Sabate-Vidales, M., Šiška, D., and Szpruch, L. (2020).
\newblock Solving path dependent pdes with lstm networks and path signatures.

\bibitem[Sakaguchi, 1988]{sakaguchi1988cooperative}
Sakaguchi, H. (1988).
\newblock Cooperative phenomena in coupled oscillator systems under external
  fields.
\newblock {\em Progress of theoretical physics}, 79(1):39--46.

\bibitem[Salvi et~al., 2021]{Salvi_2021_goursat}
Salvi, C., Cass, T., Foster, J., Lyons, T., and Yang, W. (2021).
\newblock The signature kernel is the solution of a goursat pde.
\newblock {\em SIAM Journal on Mathematics of Data Science}, 3(3):873–899.

\bibitem[Shmelev and Salvi, 2025]{shmelev2025pysiglibfastsignaturebased}
Shmelev, D. and Salvi, C. (2025).
\newblock pysiglib -- fast signature-based computations on cpu and gpu.

\bibitem[Solow, 1956]{SolowModel}
Solow, R.~M. (1956).
\newblock A contribution to the theory of economic growth.
\newblock {\em The Quarterly Journal of Economics}, 70(1):65--94.

\bibitem[T{\'o}th et~al., 2025]{ksiglibrary}
T{\'o}th, C., Cruz, D. J.~D., and Oberhauser, H. (2025).
\newblock A user's guide to \texttt{KSig}: Gpu-accelerated computation of the
  signature kernel.
\newblock {\em arXiv preprint arXiv:2501.07145}.

\bibitem[Toth et~al., 2024]{toth2024randomfouriersignaturefeatures}
Toth, C., Oberhauser, H., and Szabo, Z. (2024).
\newblock Random fourier signature features.

\bibitem[{Vibrationdata}, ]{elcentroearthquakedata}
{Vibrationdata}.
\newblock El centro earthquake.

\bibitem[Wang et~al., 2024]{PINNproblem4}
Wang, S., Sankaran, S., and Perdikaris, P. (2024).
\newblock Respecting causality for training physics-informed neural networks.
\newblock {\em Computer Methods in Applied Mechanics and Engineering},
  421:116813.

\bibitem[Wang et~al., 2021]{PINNproblems1}
Wang, S., Teng, Y., and Perdikaris, P. (2021).
\newblock Understanding and mitigating gradient flow pathologies in
  physics-informed neural networks.
\newblock {\em SIAM Journal on Scientific Computing}, 43(5):A3055--A3081.

\bibitem[Xiang et~al., 2024]{SR_duffing}
Xiang, J., Guo, J., and Li, X. (2024).
\newblock A two-stage duffing equation-based oscillator and stochastic
  resonance for mechanical fault diagnosis.
\newblock {\em Chaos, Solitons \& Fractals}, 182:114755.

\bibitem[Yilmaz and Unal, 2019]{duffing_FGN_modeling}
Yilmaz, A. and Unal, G. (2019).
\newblock Stochastic duffing equation in modelling of financial time series.
\newblock {\em International Journal of Dynamics and Control}, 7:1--22.

\bibitem[Zeng and Jiang,
  2025]{zeng2025semiparametricfunctionalclassificationpath}
Zeng, P. and Jiang, S. (2025).
\newblock Semi-parametric functional classification via path signatures
  logistic regression.

\end{thebibliography}

\end{document}